\renewcommand{\hat}[1]{\ensuremath{\widehat{#1}}}
\newcommand{\ASB}{\mathbf{Asm}}    
\newcommand{\bs}{\smallsetminus}
\newcommand{\Cat}{\mathbf{Cat}}
\newcommand{\SCob}[2]{\{\uppercase{#1}_{#2}\}_{#2\in\uppercase{#2}}}
\newcommand{\smashes}{\smash\cdots\smash}
\newcommand{\SSub}{\mathrm{Sub}}
\renewcommand{\tilde}[1]{\ensuremath{\widetilde{#1}}}
\newcommand{\Tw}{\mathrm{Tw}}
\newcommand{\V}{\mathscr{V}}
\DeclareMathOperator{\coker}{coker}
\renewcommand{\R}{\mathbb{R}}
\newcommand{\gG}{\mathfrak{G}}
\newcommand{\ab}{\mathrm{ab}}
\newcommand{\W}{\mathcal{W}}
\newcommand{\SC}{\mathrm{SC}}
\newcommand{\curD}{\mathscr{D}}
\newcommand{\<}{\langle}
\renewcommand{\>}{\rangle}
\newcommand{\anno}[2]{\stackrel{\mathrm{#1}}{#2}}
\newcommand{\qqand}{\qquad\hbox{and}\qquad}
\tikzset{
  move/.style={>->},
  sub/.style={densely dashed,->},
  cover/.style={densely dashed,->>}
}
\newcommand{\arrow}{\diagArrow[m-]}
\def\sub{\arrow{sub}}
\def\cover{\arrow{cover}}
\def\move{\arrow{move}}
\renewcommand{\dot}{{{\raisebox{.2ex}{\scalebox{.3}{$\bullet$}}}}}
\def\morpair#1#2#3#4{\begin{tikzpicture}[baseline=(A.base)] \node[anchor=east]%
    (A) at (0,0) {$#1$}; \node[anchor=west] (B) at (2em,0) {$#2$};%
    \diagArrow{->, bend left}{A}{B}!{#3} \diagArrow{->, bend %
      right}{A}{B}!{#4}%
  \end{tikzpicture}}
\def\longmorpair#1#2#3#4#5{\begin{tikzpicture}[baseline=(A.base)] \node[anchor=east]%
    (A) at (0,0) {$#2$}; \node[anchor=west] (B) at (#1,0) {$#3$};%
    \diagArrow{->, bend left}{A}{B}!{#4} \diagArrow{->, bend %
      right}{A}{B}!{#5}%
  \end{tikzpicture}}
\def\mory#1#2#3#4#5#6{\begin{tikzpicture}[baseline=(A.base)] \node[anchor=east] (A) at (0,0) {$#1$}; \node[anchor=west] (B) at (2em,2ex) {$#2\sqcup \W(\iota)(#3)$}; \node[anchor=west] (C) at (2em,-2ex) {$#2\sqcup\W(\iota)(#4)$}; \diagArrow{<-, bend left}{A}{B.west}!{#5} \diagArrow{<-, bend  right}{A}{C.west}!{#6};%
  \end{tikzpicture}}
\newcommand{\bmory}[6]{\Big[\mory{#1}{#2}{#3}{#4}{#5}{#6}\Big]}
\newtheorem{maintheorem}{Theorem}
\theoremstyle{remark}
\newtheorem{example}[equation]{Example}
\newtheorem{non-example}[equation]{Counterexample}
\newtheorem{observation}[equation]{Observation}
\theoremstyle{definition}
\theoremstyle{plain}
\begin{document}
\title{On $K_1$ of an assembler}
\author{Inna Zakharevich}

\begin{abstract}
  This paper contains a construction of generators and partial relations for
  $K_1$ of a simplicial Waldhausen category where cofiber sequences split up to
  weak equivalence.  The main application of these generators and relations is
  to produce generators for $K_1$ of a (simplicial) assembler.
\end{abstract}

\maketitle


\section*{Introduction}

In this paper we develop tools with which we can analyze $K_1$ of an assembler.
Assemblers were introduced in \cite{Z-Kth-ass} as a formal framework with which
to analyze abstract scissors congruence relations using spectra instead of
groups; as this paper relies heavily on the notation and tools of the previous
paper, that paper should be read first.  The spectra used are obtained using
algebraic $K$-theory, which is notoriously difficult to compute.  In
\cite{Z-Kth-ass} there were no tools for computing higher homotopy groups of
assemblers.

In this paper we take inspiration from the author's previous work on polytope
complexes \cite{zakharevich10, zakharevich11} and Muro and Tonks' work on
identifying the $1$-type of a Waldhausen $K$-theory spectrum \cite{murotonks07,
  murotonks08} to construct a model for $K_1$ of an assembler.  The main theorem
of the paper is this:
\begin{maintheorem}
  Suppose that $\E_\dot$ is a nice simplicial Waldhausen category. Then every
  element of $K_1(\E_\dot)$ is represented by a diagram in $\E_0$ of the form
  \[B \sqcup d_1W \lwe^{f} A \rwe^g B \sqcup d_1V\] where $V,W$ are in $\E_1$ satisfy
  the additional condition that $d_0V_1 = d_0W_1$.
\end{maintheorem}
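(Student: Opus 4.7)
The plan is to adapt the Muro--Tonks identification of the $1$-type of a Waldhausen $K$-theory spectrum to the simplicial setting, using the splitting-up-to-weak-equivalence hypothesis to convert cofibrations into coproduct inclusions, and then to collapse an arbitrary representative down to the normal form in the statement.

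In the non-simplicial case, Muro and Tonks show that every element of $K_1$ of a Waldhausen category is represented by a formal sum of weak equivalences, with relations coming from composition and cofiber sequences. First I would recast this presentation for the simplicial Waldhausen category $\E_\dot$: the extra simplicial direction contributes additional generators coming from objects of $\E_1$ which, via their face maps $d_0, d_1 : \E_1 \to \E_0$, determine loops in the realization. An arbitrary element of $K_1(\E_\dot)$ would then be represented by a formal combination of weak equivalences in $\E_0$ and objects of $\E_1$, modulo appropriate Muro--Tonks style relations.

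Next I would use the splitting hypothesis to replace each cofibration $A \rsub B$ appearing in the relations by its coproduct model $A \rsub A \sqcup (B/A)$, so that the relations can be rewritten entirely in terms of weak equivalences, coproduct inclusions, and objects of $\E_1$. Applying the standard moves that collapse a long zigzag into a single span, I would reduce a representative to a span $X \lwe^f A \rwe^g Y$ in $\E_0$ in which the contributions from $\E_1$ have been absorbed into a single pair $V, W$; this gives $X = B \sqcup d_1 W$ and $Y = B \sqcup d_1 V$ for a common summand $B$. The condition $d_0 V_1 = d_0 W_1$ should then arise from the fact that the loop must close: the $0$-faces of $V$ and $W$ correspond to the pieces cancelled upon returning to the basepoint, and must agree for the simplicial loop to close up.

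The hard part will be the bookkeeping needed to collapse a long zigzag into this very specific form. Keeping track of how the Muro--Tonks relations interact with the extra simplicial structure, ensuring that the splitting hypothesis is robust enough to handle chains of cofibrations coherently, and in particular arranging that the final summand $B$ is genuinely shared on the nose rather than only weakly equivalent at the two endpoints, is where I expect the real work to lie.
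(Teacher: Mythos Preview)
Your broad strategy matches the paper's: adapt Muro--Tonks to the simplicial setting, use the splitting hypothesis to eliminate cofiber-sequence generators in favour of weak equivalences and objects of $\E_1$, and then algebraically reduce to the normal form. However, there are several concrete technical steps in the paper's argument that your outline does not anticipate, and at least one point where your intuition is backwards.

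First, the paper does not ``collapse a zigzag'' geometrically; it works entirely inside the stable quadratic module $\curD_*\E_\dot$ (with $K_1 = \ker\partial$). The reduction proceeds by first passing to the quotient $\curD_1\E_\dot/G$ by $G = \operatorname{im}\langle\cdot,\cdot\rangle$, which is abelian, and rewriting any element there as $[X_0 \rwe Z_0]^{-1}[V_1]^{-1}[W_1][Y_0 \rwe Z_0]$. The condition $d_0V_1 = d_0W_1$ is not a consequence of the loop closing up, as you suggest; it is \emph{engineered} by replacing the original $C_1,D_1$ with $V_1 = C_1 \sqcup s_0d_0D_1$ and $W_1 = s_0d_0C_1 \sqcup D_1$. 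Second, to conclude that $\partial x = 1$ forces $X_0 \sqcup d_1V_1 \cong Y_0 \sqcup d_1W_1$ via a coproduct permutation, the paper uses hypothesis~(2) of Proposition~3.5 (a set $S_0$ freely generating the object monoid), so that $(\curD_0\E_\dot)^{\mathrm{ab}}$ is free abelian on $S_0$; your outline does not mention this, and without it the step fails. Third, after working modulo $G$ one still owes an error term $\alpha \in G$ with $\partial\alpha = 1$; the paper disposes of this using that $\curD_0\E_\dot$ is free of nilpotency class $2$ together with a lemma of Muro--Tonks forcing $\alpha = \langle y,y\rangle$, which is then absorbed into the normal form. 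None of your three paragraphs touches this last point, and it is where the genuine algebraic content lies.
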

The technical conditions are listed in Proposition~\ref{prop:1typeE.}; the
theorem is proved in Theorem~\ref{thm:K1diag}.  The statement of the theorem
also includes several relations that these generators satisfy.

The main application of this theorem is this:
\begin{maintheorem}
  For any assembler $\C$, every element of 
    $K_1(\C)$ can be represented by a pair of morphisms
    \[\morpair{A}{B}{f}{g}\]
    in $\W(\C)$.  These satisfy the relations
    \[
    \big[\morpair{A}{B}{f}{f}\big] = 0,   \qquad 
    \big[\morpair{B}{C}{g_1}{g_2}\big] + \big[\morpair{A}{B}{f_1}{f_2}\big] =
    \big[\longmorpair{3em}{A}{C}{g_1f_1}{g_2f_2}\big]  \]
    and 
    \[\big[\morpair{A}{B}{f_1}{f_2}\big] + \big[\morpair{C}{D}{g_1}{g_2}\big] =
    \big[\begin{tikzpicture}[baseline] \node (A) at (0,0) {$A\amalg C$}; \node
      (B) at (5em,0) {$B\amalg D$}; \diagArrow{->, bend left}{A}{B}^{f_1\amalg g_1} \diagArrow{->,
        bend %
        right}{A}{B}_{f_2\amalg g_2}%
    \end{tikzpicture}\big]
    \]
\end{maintheorem}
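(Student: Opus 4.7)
The plan is to derive this from the preceding theorem by applying it to the simplicial Waldhausen category $\E_\dot$ associated to the assembler $\C$ in \cite{Z-Kth-ass}, whose algebraic $K$-theory realizes $K(\C)$. First I would verify that this $\E_\dot$ satisfies the hypotheses of Proposition~\ref{prop:1typeE.}; the crucial observation is that cofibrations in an assembler are inclusions of summands, so cofiber sequences split up to weak equivalence automatically, which is the only nontrivial condition to check.

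Having checked the hypotheses, every element of $K_1(\C)=K_1(\E_\dot)$ is represented, by the preceding Theorem, by a diagram $B\sqcup d_1W \lwe^{f} A \rwe^{g} B\sqcup d_1V$ where $V,W\in\E_1$ satisfy $d_0V=d_0W$. In the assembler context, a $1$-simplex in $\E_1$ is a morphism in $\W(\C)$, so $V,W$ are morphisms sharing a common source, say $C=d_0V=d_0W$, with targets $d_1V,d_1W$. I would then absorb $B$ into $V,W$ by replacing them with their disjoint unions with the degenerate $1$-simplex $s_0B$, using that $\sqcup$ is compatible with the simplicial structure and preserves the $d_0$-constraint. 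This reduces to $B=\emptyset$, so the generator takes the form of a span $d_1W \lwe^{f} A \rwe^{g} d_1V$ together with morphisms $V,W\colon C\rightrightarrows d_1V,d_1W$.

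The next step is to collapse the zigzag into a pair of morphisms $C\rightrightarrows A$ in $\W(\C)$. Because the $f,g$ in the diagram are weak equivalences in $\W(\C)$, and because in an assembler weak equivalences are formally invertible inside $\W(\C)$ (by construction of the hom-sets, up to the equivalence implicit in the $K_1$ relations), composing $V$ and $W$ with the formal inverses of $f$ and $g$ produces a single pair $\morpair{C}{A}{}{}$, which is the asserted form of the generator.

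The final task is to verify the three relations. The relation $[\morpair{A}{B}{f}{f}]=0$ is precisely the degenerate case $V=W$ of the preceding Theorem, where the corresponding element of $K_1$ is trivial. The disjoint union relation follows from the fact that the generators of the preceding Theorem are additive under $\sqcup$ together with the additivity of the assembler's $K$-theory. The main obstacle will be the composition relation: given composable pairs $f_1,f_2\colon A\to B$ and $g_1,g_2\colon B\to C$, one must exhibit a $2$-simplex of $\E_\dot$ whose three faces (under the identification of $K_1$-generators with $1$-simplex data) recover the three pairs $\morpair{A}{B}{f_1}{f_2}$, $\morpair{B}{C}{g_1}{g_2}$, $\morpair{A}{C}{g_1f_1}{g_2f_2}$, and then translate the simplicial boundary relation into the claimed additive identity. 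Unwinding Theorem~A's relations carefully enough to extract this identity without introducing extra weak-equivalence data is the delicate point; once it is settled, the remaining bookkeeping for the three relations is routine.
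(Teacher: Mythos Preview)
Your approach has a genuine gap stemming from a misidentification of the simplicial Waldhausen category $\E_\dot$.  For a single assembler $\C$ the relevant $\E_\dot$ is the \emph{constant} simplicial Waldhausen category with $\E_n = \SC(\C)$ for all $n$ and all face and degeneracy maps equal to the identity (equivalently, take $\D$ to be the trivial assembler in the cofiber construction $(\C/\iota)_\dot$).  In particular an object of $\E_1$ is an object of $\SC(\C)$, not a morphism in $\W(\C)$; there is no ``source'' or ``target'' data attached to $V$ and $W$, and the step where you interpret $V,W$ as morphisms with common source $C = d_0V = d_0W$ does not make sense.

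Once this is corrected the argument becomes much simpler than what you outline.  Since $d_0$ is the identity, the constraint $d_0V = d_0W$ forces $V = W$.  Since $s_0$ is also the identity, relation (A1) gives $[V_1] = [s_0 V_1] = 1$ for every $V_1\in\E_1$.  Hence
\[
\{f_{V_1},g_{W_1}:A_0\rwe B_0\} = [A_0 \rwe^f B_0\sqcup V_1]^{-1}[A_0 \rwe^g B_0\sqcup V_1],
\]
i.e.\ simply a pair of weak equivalences in $\SC(\C)$ with the same source and target.  Since $w\SC(\C) = \W(\C)^\op$, this is exactly a pair of parallel morphisms in $\W(\C)$.  The three relations are then read off directly from the three relations in Theorem~\ref{thm:K1diag}; no $2$-simplices and no ``formal inverses'' are needed.  (The paper packages this as the special case $\D = *$ of Corollary~\ref{cor:K1Cg}, whose proof carries out the elimination of $V_1,W_1$ in the more general setting.)
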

This is proved in Corollary~\ref{cor:K1C}.  A more general statement for $K_1$
of the cofiber of a morphism of assemblers is given in
Corollary~\ref{cor:K1Cg}.  In addition, Theorem~\ref{thm:K1diag} gives a general
description of elements in $K_1$ of a (particularly nice type of) simplicial
Waldhausen category.

We give two applications of this theorem.  First, we compute $K_1$ of the
assembler of segments in a line, and show that it is isomorphic to the
abelianization of the group of interval exchange transformations.  For more
details, see Section~\ref{sec:iet}.  For the second, we describe how to compute
the differentials in the spectral sequence for scissors congruence groups
constructed in \cite[Section 5.2]{Z-Kth-ass}; see Section~\ref{sec:polytope}.  A
further application to the Grothendieck ring of varieties is developed in much
greater depth in \cite{Z-ass-var}.

This paper is organized as follows.  Section~\ref{sec:kthass} gives a quick
review of the definition of assemblers, as well as a definition of the
$K$-theory of an assembler using Waldhausen's construction.
Section~\ref{sec:S.} shows that this definition of the $K$-theory agrees with
the definition given in \cite{Z-Kth-ass}.  Section~\ref{sec:K1} develops the
model for $K_1$ of an assembler and proves the main theorem.
Sections~\ref{sec:iet} and \ref{sec:polytope} contain applications of these
theorems to interval exchange transformations and to the spectral sequence
associated to $n$-dimensional scissors congruence groups.
Section~\ref{app:murotonks} contains a proof of the main theorem of
Section~\ref{sec:K1}.

\section{The $K$-theory of a closed assembler} \lbl{sec:kthass}

In this section we recall the definition of a closed assembler, and construct
its $K$-theory spectrum as a Waldhausen category.  For more on assemblers, as
well as the original definition of the functor $K:\ASB \rto \Sp$, see
\cite[Section 1]{Z-Kth-ass}.

\begin{definition}
  In any category with an initial object $\initial$, we say that two morphisms
  $f:A\rto C$ and $g:B\rto C$ are \textsl{disjoint} if the pullback $A\times_C
  B$ exists and is equal to $\initial$.  A family $\{f_i:A_i \rto A\}_{i\in I}$
  is a \textsl{disjoint family} if for $i\neq i'$ the morphisms $f_i$ and
  $f_{i'}$ are disjoint.
\end{definition}

\begin{definition} 
  Let $\C$ be a Grothendieck site.  We denote the full subcategory of noninitial
  objects by $\C^\circ$.  We say that a family of maps $\{A_i\rto A\}_{i\in I}$
  is a \textsl{covering family} if it generates a covering sieve in the
  topology.

  A \textsl{closed assembler} is a small Grothendieck site $\C$ satisfying the
  following extra conditions:
  \begin{itemize}
  \item[(I)] $\C$ has an initial object $\initial$ and the empty family is a
    covering family of $\initial$.
  \item[(P)] $\C$ is closed under pullbacks.
  \item[(M)] All morphisms in $\C$ are monomorphisms.
  \end{itemize}
 
  A \textsl{morphism of closed assemblers} $F: \C\rto \D$ is a continuous (in
  the sense of Grothendieck topologies) functor which preserves pullbacks and
  the initial object.  We denote the category of closed assemblers and morphisms
  of closed assemblers by $c\ASB$.
\end{definition}

In order to construct scissors congruence categories associated to assemblers,
we need to be able to construct formal sums and decompositions of objects in an
assembler.  For this we need Grothendieck twists.

\begin{definition} \lbl{def:grtwist} The \textsl{Grothendieck twist of $\C$},
  written $\Tw(\C)$, is defined to be the category whose objects are tuples
  $\SCob{a}{i}$, where $I$ is a finite set and each $a_i$ is in $\ob\C$.  A morphism
  $\SCob{a}{i}\rto \SCob{b}{j}$ in $\Tw(\C)$ consists of a morphism of
  finite sets $f: I\rto J$, together with morphisms $f_i: A_i\rto B_{f(i)}$ for
  all $i\in I$.  In general we denote a morphism of $\Tw(\C)$ by a lower-case
  letter.  By an abuse of notation, we use the same letter to refer to the
  morphism's underlying map of sets, and subscripted versions of the letter to
  refer to the $\C$-components of the morphism (as we did above).
\end{definition}

The functor $\Tw(\C) \rto \FinSet$ given by forgetting the $\C$-components is a
fibration of categories.

\begin{proposition}
  Let $\C$ be any category and let $\D$ be a sieve in $\C$.  
  \begin{enumerate}
  \item If $\C$ has all pullbacks then so does $\Tw(\C \bs \D)$.
  \item If $\C$ has all finite connected colimits (equivalently: all pushouts
    and coequalizers) then $\Tw(\C)$ has all pushouts.  
  \item If $\C$ contains all pushouts then $\Tw(\C)$ is closed under pushouts
    along morphisms whose underlying maps of sets are injective.
  \end{enumerate}
\end{proposition}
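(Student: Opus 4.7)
My plan is to compute each (co)limit in $\Tw$ in two stages: first take the corresponding (co)limit of the underlying indexing sets in $\FinSet$, then, for each resulting index, form an appropriate (co)limit in $\C$ of the relevant components. The fibration $\Tw(\C)\to\FinSet$ noted after Definition~\ref{def:grtwist} is what makes this two-stage computation legitimate.

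For (1), given a cospan $\{a_i\}_{i\in I}\to\{c_k\}_{k\in K}\leftarrow\{b_j\}_{j\in J}$ with underlying maps $f\colon I\to K$ and $g\colon J\to K$, I would set $L=\{(i,j):f(i)=g(j)\}$, take the pullback $p_{i,j}=a_i\times_{c_{f(i)}}b_j$ in $\C$ for each $(i,j)\in L$, and then discard the pairs with $p_{i,j}\in\D$, calling the remaining index set $L'$. The candidate pullback is $\{p_{i,j}\}_{(i,j)\in L'}$. The sieve hypothesis enters only to verify the universal property: for any test family $\{x_m\}_M$ in $\Tw(\C\bs\D)$ with compatible maps, the induced morphism $x_m\to p_{u(m),v(m)}$ together with downward-closure of $\D$ forces $p_{u(m),v(m)}\notin\D$, so $(u(m),v(m))$ lands in $L'$ as required.

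For (2), given a span with underlying $f\colon K\to I$ and $g\colon K\to J$, I would form $P=I\sqcup_K J$ and observe that each equivalence class $p\in P$ cuts out a finite subdiagram of the span, which is connected by construction. The hypothesis yields a colimit $q_p\in\C$ for each such class, and a direct check of universal properties shows $\{q_p\}_{p\in P}$ is the pushout in $\Tw(\C)$. For (3), injectivity of $f$ forces each class in $P$ to be either an isolated singleton or a star $\{j\}\cup f(g^{-1}(j))$ centered at $b_j$, whose colimit can be built by iteratively pushing out against each $a_{f(k)}$ along $c_k$; the star shape rules out parallel arrows between the same pair of objects, so no coequalizer is required.

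The main obstacle is the sieve bookkeeping in (1): one needs to know that a pullback component discarded for lying in $\D$ cannot receive a morphism from any object of $\C\bs\D$, which is precisely downward-closure of a sieve. Parts (2) and (3) are then essentially combinatorial, identifying the right subdiagrams in the underlying pushout of finite sets and checking when they decompose into iterated pushouts.
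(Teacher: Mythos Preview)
Your argument is correct and is exactly the natural two-stage computation one expects: first solve the problem in $\FinSet$, then fiberwise in $\C$, with the sieve condition in (1) ensuring that discarded components can never be targets of maps from $\C\bs\D$. The analysis of the shape of each equivalence class in (2) and (3) is accurate; in particular, injectivity of $f$ in (3) does force each nontrivial class to be a fan of spans sharing a single $b_j$, so iterated pushouts suffice and no coequalizer step arises.

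There is nothing to compare against in the paper itself: the proof given here simply cites \cite[Lemmas~2.3 and 2.4]{zakharevich10} and moves on. Your write-up is essentially what those cited lemmas do, so you have reconstructed the intended argument rather than found an alternative route.
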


\begin{proof}
  Part (1) is proved in \cite[Lemma 2.3]{zakharevich10}.  Parts (2) and (3) are
  proved in \cite[Lemma 2.4]{zakharevich10}.
\end{proof}

Let $\C$ be an assembler, and consider $\Tw(\C^\circ)$.  It contains three
distinguished types of morphisms.
\begin{definition}
  A morphism $f:\SCob{a}{i} \rto \SCob{b}{j}$ in $\Tw(\C^\circ)$ is a
  \begin{description}
  \item[sub-map] if for all $i,i'\in I$ such that $f(i) = f(i')$, the morphisms
    $f_i:A_i\rto B_{f(i)}$ and $f_{i'}:A_{i'} \rto B_{f(i')}$ are disjoint.
    Sub-maps are denoted $\rsub$.  The subcategory of sub-maps is
    denoted $\Tw(\C^\circ)^\Sub$.
  \item[covering sub-map] if it is a sub-map and for all $j\in J$ the family
    $\{f_i:A_i \rto B_j\}_{i\in f^{-1}(j)}$ is a covering family.  Covering
    sub-maps are denoted $\rcover$.  The subcategory of covering sub-maps
    is denoted $\W(\C)$
  \item[move] if for all $i\in I$, $f_i$ is an isomorphism.  Moves are
    denoted $\rmove$.
  \end{description}

  Note that $\W$ is a functor $c\ASB\rto \Cat$.
\end{definition}

\begin{proposition} \lbl{prop:Twprop}
  Let $\C$ be a closed assembler.  
  \begin{enumerate}
  \item $\Tw(\C^\circ)$ is closed under pullbacks and coproducts.  The pullback
    of a sub-map (resp. covering sub-map, move) is a sub-map (resp. covering
    sub-map, move).  The subcategory of sub-maps (resp. covering sub-maps,
    moves) is closed under coproducts.
  \item All morphisms in $\W(\C)$ are monomorphisms.
  \item $\W(\C)$ has all pullbacks.
  \item For any family of assemblers $\{\C_x\}_{x\in X}$, write $\bigvee_{x\in
      X}\C_x$ for the assembler whose objects are $\{\initial\}\cup
    \bigcup_{x\in X} \ob \C_x^\circ$ and where 
    \[\Hom(A,B) =
    \begin{cases}
      \Hom_{\C_x}(A,B) & \hbox{if }A,B\in \C_x, \\
      * & \hbox{if } A = \initial, \\
      \emptyset & \hbox{otherwise.}
    \end{cases}\] The topology is induced from the topologies of the $\C_x$.
    Then
    \[\Tw\big(\bigvee_{x\in X}\C_x\big) \simeq \bigoplus_{x\in X} \Tw(\C_x),\]
    where $\bigoplus \Tw(\C_x)$ is the full subcategory of $\prod \Tw(\C_x)$ where
    all but finitely many of the objects are the object indexed by the empty
    set.  The same holds with $\Tw$ replaced by $\W$.
  \end{enumerate}
\end{proposition}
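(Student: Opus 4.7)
The strategy for all four parts is to reduce to componentwise checks in $\C$, relying on the pullback existence assertion of the previous proposition.

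For (1), first observe that condition (M) forces the initial object of $\C$ to be strict, so $\C^\circ$ is a sieve in $\C$; combined with (P), the previous proposition supplies all pullbacks in $\Tw(\C^\circ)$. Coproducts in $\Tw(\C^\circ)$ are given by disjoint union of index sets. The three preservation statements then reduce to componentwise checks: for sub-maps, given distinct pullback indices $(i,k)\neq (i',k)$ lying over a common $k$ (so $f(i)=f(i')=j$, say), one computes
\[
(A_i\times_{B_j} C_k)\times_{C_k}(A_{i'}\times_{B_j} C_k)\;=\;(A_i\times_{B_j}A_{i'})\times_{B_j} C_k\;=\;\initial
\]
using disjointness of $A_i$ and $A_{i'}$ over $B_j$; covering sub-maps pull back to covering sub-maps by the stability of covering families under pullback in a Grothendieck topology; and moves pull back to moves because pullbacks of isomorphisms are isomorphisms. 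Closure of these subcategories under coproducts is immediate from the componentwise description.

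For (2), given $f:\{A_i\}\to\{B_j\}$ a covering sub-map and $g,g':\{C_k\}\to\{A_i\}$ with $fg=fg'$, I would first show that $g$ and $g'$ agree on the underlying index maps. For each $k$, $f(g(k))=f(g'(k))$, and if $g(k)\neq g'(k)$ then the sub-map condition makes $A_{g(k)}$ and $A_{g'(k)}$ disjoint over their common target, so the two composites out of $C_k$ would factor through $\initial$, contradicting $C_k\in\C^\circ$. Equality of the $\C$-components then follows from (M). Part (3) is a direct corollary of (1): pullbacks in $\W(\C)$ are computed in $\Tw(\C^\circ)$, and each projection is a covering sub-map because (1) shows it is a sub-map and Grothendieck-topology pullback stability shows the resulting families are covering.

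For (4), I would exhibit the equivalence directly. An object of $\Tw(\bigvee_x\C_x)$ is a finite tuple $\{A_i\}_{i\in I}$ each of whose entries is either $\initial$ or lies in a unique $\C_x^\circ$; partitioning $I$ by ambient component produces an object in $\prod_x\Tw(\C_x)$ with only finitely many non-empty coordinates, i.e.\ an object of $\bigoplus_x\Tw(\C_x)$. The hom-set formula for $\bigvee_x\C_x$ forbids non-trivial cross-component morphisms, so morphisms on the two sides correspond; the sub-map/covering conditions are componentwise, which also gives the statement for $\W$. The main obstacle is in (1): pullbacks in $\Tw(\C^\circ)$ silently discard index pairs whose fibered product lands at $\initial$, so one has to check carefully that the sub-map, covering, and move properties continue to hold on the indices that remain. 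Once that is in place, the rest of the proposition is essentially bookkeeping.
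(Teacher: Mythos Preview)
Your proposal is correct and follows essentially the same approach as the paper, which is quite terse: it declares (1) direct from the definitions, cites an external reference for (2), attributes (3) to axiom (P), and for (4) builds the comparison functor via componentwise projections and checks it is fully faithful and essentially surjective. Your explicit argument for (2)---using the sub-map disjointness to force agreement of index maps and then axiom (M) on components---simply unpacks what the cited reference presumably contains, and your treatments of (1), (3), and (4) spell out details the paper leaves implicit.
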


\begin{proof}
  Part (1) is direction from the definitions.  Part (2) is proved in
  \cite[Proposition 1.10(1)]{Z-Kth-ass}.  Part (3) follows directly from axiom
  (P).  To prove part (4), observe that we have a functor
  \[P:\Tw(\bigvee_{x\in X} \C_x) \rto \prod_{x\in X} \Tw(\C_x)\] with projections
  \[F_x: \Tw(\bigvee_{x\in X} \C_x) \rto \Tw(\C_x)\] that send
  all $\C_y$ for $y\neq x$ to the initial object, and send $\C_x$ to itself via the
  identity.  As each object of $\Tw(\bigvee_{x\in X}\C_x)$ is indexed by a finite
  set it must land in $\bigoplus_{x\in X} \Tw(\C_x)$, so we see that $P$ is
  actually a functor $\Tw(\bigvee_{x\in X} \C_x) \rto \bigoplus_{x\in
    X}\Tw(\C_x)$.  To see that this is an equivalence, note that it is full and
  faithful and hits all objects indexed by disjoint indexing sets; since this is
  an equivalent subcategory, $P$ is also essentially surjective.  That this
  statement holds with $\Tw$ replaced by $\W$ is \cite[Proposition
  1.10(3)]{Z-Kth-ass}. 
\end{proof}

We are now ready to define the Waldhausen $K$-theory of an assembler.

\begin{definition} Let $\C$ be a closed assembler.  The category $\SC(\C)$ is
  defined to have $\ob\SC(\C) = \ob(\Tw(\C^\circ))$.  The morphisms of $\SC(\C)$
  are equivalence classes of diagrams in $\Tw(\C^\circ)$
  \[A \lsub^p A' \rmove^\sigma B,\] where $p$ is a sub-map and $\sigma$ is a
  move, and where two diagrams are considered equivalent if there is an
  isomorphism $\iota: A'_1 \rto A'_2\in \Tw(\C^\circ)$ which makes the following
  diagram commute:
  \begin{squisheddiagram}
    {& A_1' \\ A && B \\ & A_2' \\};
    \sub{1-2}{2-1}+{above=.5ex}{p_1} \sub{3-2}{2-1}+{below=.5ex}{p_2}
    \move{1-2}{2-3}+{above right=-.4ex}{\sigma_1} \move{3-2}{2-3}+{below right=-.4ex}{\sigma_2}
    \to{1-2}{3-2}^\iota
  \end{squisheddiagram}
  
  We say that a morphism $A \lsub^p A' \rmove^\sigma B$ is a
  \begin{description}
  \item[cofibration] if $p$ is a covering sub-map and the projection of $\sigma$
    to $\FinSet$ is injective, and a
  \item[weak equivalence] if $p$ is a covering sub-map and the projection of
    $\sigma$ to $\FinSet$ is bijective.
  \end{description}
  Note that any weak equivalence can be uniquely represented by a diagram where
  $A' = B$ and $\sigma=1_B$.  The composition of two morphisms $f: A\rto B$ and
  $g: B \rto C$ represented by
  \[A \lsub^p A' \rmove^\sigma B \qquad \hbox{and}\qquad B \lsub^q B' \rmove^\tau C\]
  is defined to be the morphism represented by the outside of the diagram
  \begin{squisheddiagram}
    { & & A'\times_B B' & & \\
      & A' &  & B' \\
      A && B && C \\}; \move{1-3}{2-4} \sub{1-3}{2-2} \sub{2-2}{3-1}_p
    \sub{2-4}{3-3}_q \move{2-2}{3-3}^\sigma \move{2-4}{3-5}^\tau
  \end{squisheddiagram}
  The left-hand side of the pullback square is a sub-map and the right-hand side
  is a move by Proposition~\ref{prop:Twprop}(1).
\end{definition}

\begin{remark}
  The subcategory of weak equivalences of $\SC(\C)$ is isomorphic to
  $\W(\C)^\op$.
\end{remark}

\begin{theorem} \lbl{thm:Wald} $\SC$ is a functor $c\ASB\rto \mathbf{WaldCat}$.
  Every Waldhausen category in the image of $\SC$ has a canonical splitting (up
  to weak equivalence) for every cofibration sequence.
\end{theorem}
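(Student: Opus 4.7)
The plan is to address each claim in turn: first verifying the Waldhausen axioms for $\SC(\C)$, then checking functoriality in morphisms of closed assemblers, and finally constructing the canonical splitting of cofibration sequences.

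For the Waldhausen structure, the zero object is the tuple indexed by the empty set, and every isomorphism in $\SC(\C)$ is automatically both a cofibration and a weak equivalence. To see that cofibrations and weak equivalences are closed under composition, I would unwind the pullback formula for span composition given in the definition: by Proposition~\ref{prop:Twprop}(1), the pullback of a covering sub-map along any morphism is again a covering sub-map, and the pullback of a move along any morphism is again a move. Since pullbacks in $\Tw(\C^\circ)$ are computed over the pullback of the $\FinSet$-projections, injectivity (respectively bijectivity) of the underlying set map of a move is preserved through pullback, and these properties are also closed under composition, giving composition closure for cofibrations (respectively weak equivalences). The pushout axiom rests on the decomposition $B \cong \sigma(A') \sqcup B_{\mathrm{rest}}$, which is available precisely because $\sigma$'s projection to $\FinSet$ is injective whenever $A \lsub^p A' \rmove^\sigma B$ is a cofibration: given any other map $A \lsub^q A'' \rmove^\tau C$, I would form the pushout by first pulling back $q$ along $p$ in $\Tw(\C^\circ)$ to obtain a common refinement of $A'$ and $A''$ over $A$, and then forming the coproduct with $B_{\mathrm{rest}}$. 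Closure of cofibrations under pushouts and the gluing lemma for weak equivalences both reduce to the same pullback-preservation properties in $\Tw(\C^\circ)$.

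For functoriality, a morphism $F:\C\to\D$ of closed assemblers is continuous, preserves pullbacks, and preserves the initial object; these properties imply that the induced functor $\Tw(F)$ preserves disjoint families (hence sub-maps), covering families (hence covering sub-maps), and isomorphisms (hence moves). Since span composition in $\SC(-)$ is computed by pullback in $\Tw(\C^\circ)$, which $F$ preserves, the induced functor $\SC(F)$ is well-defined and automatically preserves cofibrations, weak equivalences, and the zero object.

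For the canonical splitting, given a cofibration $A \lsub^p A' \rmove^\sigma B$, the cofiber $B/A$ is represented by the subtuple $B_{\mathrm{rest}}$ of $B$ indexed by the complement of the image of $\sigma$'s underlying set map, and the decomposition $B \cong \sigma(A') \sqcup B_{\mathrm{rest}}$ yields a map $A \sqcup B/A \to B$ built from $\sigma$ on the first summand (inverted up to the weak equivalence determined by $p$) and from the identity on the second. The only non-strict feature is the weak equivalence coming from $p$, which is why the splitting is realized only up to weak equivalence rather than on the nose. The main obstacle throughout is the bookkeeping for the pushout axiom and the gluing lemma, which require tracking both the $\Tw(\C^\circ)$-component and the $\FinSet$-projection simultaneously; the injectivity of $\sigma$'s projection is precisely what ensures the existence of $B_{\mathrm{rest}}$, making the whole Waldhausen structure (and its splitting) work.
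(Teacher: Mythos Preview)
Your outline is broadly correct in spirit and matches the paper's strategy for functoriality and for the splitting, but there is a genuine gap in the pushout construction. You propose to build the pushout of a cofibration $A \lsub^p A' \rmove^\sigma B$ along an arbitrary morphism $A \lsub^q A'' \rmove^\tau C$ by pulling back $q$ along $p$ and then taking a coproduct with $B_{\mathrm{rest}}$. The decomposition $B \cong \sigma(A') \sqcup B_{\mathrm{rest}}$ does handle the $\sigma$ side, but the difficulty lies on the $\tau$ side: the induced map $A' \times_A A'' \to A''$ is a covering sub-map, and to produce the required cofibration $C \rcofib P$ you must transport this refinement forward along the move $\tau$ to obtain a covering sub-map into $C$. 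When $\tau$ has injective set map this is again a coproduct-with-complement, but for a general move $\tau$ the naive composite $A' \times_A A'' \to A'' \rmove^\tau C$ need not even be a sub-map (distinct fibres of $\tau$ over the same index of $C$ yield overlapping families), so your construction breaks down precisely in the case the Waldhausen axiom demands.

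The paper addresses exactly this point in Appendix~\ref{app:technical}: the key tool is the dependent product $\Pi_\tau$, right adjoint to $\tau^*$, constructed in Lemma~\ref{lem:dependentproduct} as an iterated pullback over the fibres of $\tau$ and shown in Lemma~\ref{lem:pullandpush} to preserve (covering) sub-maps. Applying $\Pi_\tau$ to the refinement $A' \times_A A'' \to A''$ produces the covering sub-map into $C$ from which the pushout is assembled (together with $B_{\mathrm{rest}}$). When $\tau$ has injective set map, $\Pi_\tau$ reduces to your coproduct-with-complement construction (this is the $\sigma^*\Pi_\sigma \cong 1$ clause of Lemma~\ref{lem:dependentproduct}), so your intuition is right in that regime; but the general case genuinely requires the adjoint, and it is here that the closedness hypothesis on $\C$ (axiom~(P), existence of pullbacks) is actually used.
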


We omit the proof of the theorem, as it is essentially the same as the proof
of \cite[Theorem 4.2]{zakharevich10}; for more information on translating
between polytope complexes and assemblers, see \cite[Example 2.10]{Z-Kth-ass} and
Appendix~\ref{app:technical}.  It is important to note that this theorem only
holds for closed assemblers, not all assemblers, as pullbacks in $\C$ are
necessary to construct pushouts inside $\SC(\C)$.



\begin{definition}
  For any closed assembler $\C$, we define $K^W(\C)$, the \textsl{Waldhausen
    $K$-theory spectrum} of $\C$ to be $K(\SC(\C))$.  We write $K_i(\C)$ for
  $\pi_iK^W(\C)$.
\end{definition}

We see in Theorem~\ref{thm:2models} that this is consistent with the
definition of $K_i(\C)$ in \cite{Z-Kth-ass}.

There is one aspect of the Waldhausen categories $\SC(\C)$ that it is important
to mention here.  Not all categories $\SC(\C)$ satisfy the Saturation Axiom; in
other words, the subcategory of weak equivalences $w\SC(\C)$ does not
necessarily satisfy two-of-three.  The following proposition (originally
appearing as \cite[Lemma 6.9]{zakharevich10}) highlights the cases in which
saturation is satisfied:

\begin{proposition} \lbl{prop:saturation} For any two composable morphisms $f$
  and $g$ in $\SC(\C)$, if $gf$ and $f$ are weak equivalences then so is $g$.
  If $\C$ satisfies the extra condition
  \begin{itemize}
  \item[(G)] The empty family is not a covering family for any noninitial object
    of $\C$.  Given a family $\mathcal{A}=\{X_\alpha \rto X\}_{\alpha\in A}$
    and covering families $\{X_{\alpha\beta} \rto X_\alpha\}_{\beta\in
      B_\alpha}$, if the refined family \[\{X_{\alpha\beta} \rto
    X\}_{(\alpha,\beta)\in \coprod_{\alpha\in A}B_\alpha}\] is a covering family
    then so is $\mathcal{A}$.
  \end{itemize}
  then if $gf$ and $g$ are weak equivalences, then so is $f$. 
\end{proposition}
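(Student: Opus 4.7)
The plan is to unpack the composition $gf$ in $\SC(\C)$ explicitly, and then chase the covering and bijectivity conditions that together constitute a weak equivalence. Write $f = [A \lsub^p A' \rmove^\sigma B]$ and $g = [B \lsub^q B' \rmove^\tau C]$; by definition of composition, $gf$ is represented by $A \lsub^{pq'} P \rmove^{\tau\sigma'} C$, where $P = A' \times_B B'$ and $q'$, $\sigma'$ are the pullbacks of $q$, $\sigma$ along $\sigma$, $q$ respectively. Because $\sigma$ is a move, so is $\sigma'$ by Proposition~\ref{prop:Twprop}(1), and projecting to finite sets identifies $|\sigma'| : |P| \to |B'|$ with the second projection of $|A'|\times_{|B|}|B'|$.

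First suppose $f$ and $gf$ are weak equivalences. Bijectivity of $|\sigma|$ yields bijectivity of $|\sigma'|$, and then bijectivity of $|\tau\sigma'|$ forces $|\tau|$ bijective. The move $\sigma$ is thus an iso in $\Tw(\C^\circ)$, and $q'$ has the same covering status as $q$ (being a pullback of $q$ along an iso). It remains to show that $p$ and $pq'$ being covering sub-maps forces $q'$ covering. Fix $i \in I_{A'}$. Using that morphisms in $\C$ are monomorphisms and that $\{A'_{i'} \to A_{p(i)}\}_{i' \in p^{-1}(p(i))}$ is a disjoint family, the sieve on $A'_i$ generated by $\{P_k \to A'_i\}_{k \in q'^{-1}(i)}$ is precisely the pullback along $A'_i \to A_{p(i)}$ of the sieve on $A_{p(i)}$ generated by the family from $pq'$. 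The latter is covering by hypothesis, and pullbacks of covering sieves are covering in any Grothendieck topology, so the former covers $A'_i$, as desired.

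Now suppose $g$ and $gf$ are weak equivalences and $\C$ satisfies (G). From $|\tau|$ and $|\tau\sigma'|$ bijective I extract that $|\sigma'|$ is bijective, i.e., for every $\ell \in |B'|$ the set $|\sigma|^{-1}(|q|(\ell))$ is a singleton. Axiom (G) forces $|q|$ surjective, since $q$ is a covering sub-map and the empty family covers no noninitial object; hence $|\sigma|^{-1}(b)$ is a singleton for every $b \in |B|$, so $|\sigma|$ is bijective, $\sigma$ is an iso, and $q'$ inherits the covering property from $q$. For the covering of $p$: for each $a \in I_A$, the refined family $\{P_k \to A_a\}_{k \in (pq')^{-1}(a)}$ covers $A_a$ (since $pq'$ covers), and each family $\{P_k \to A'_i\}_{k \in q'^{-1}(i)}$ covers $A'_i$ (since $q'$ covers); the second clause of axiom (G) now yields that $\{A'_i \to A_a\}_{i \in p^{-1}(a)}$ covers $A_a$, so $p$ is a covering sub-map.

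The main obstacle is the identification in the first part of the sieve generated by the restricted family $\{P_k \to A'_i\}_{k \in q'^{-1}(i)}$ with the categorical pullback along $A'_i \to A_{p(i)}$ of a covering sieve on $A_{p(i)}$: this hinges essentially on the assembler axioms that morphisms are monomorphisms and that sub-map components within a fiber are pairwise disjoint, so that each composite $P_k \to A_{p(i)}$ factors through exactly one $A'_{i'}$ and the bookkeeping of indexing agrees with the sieve-theoretic pullback.
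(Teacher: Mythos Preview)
The paper does not give a proof of this proposition; it merely cites \cite[Lemma~6.9]{zakharevich10}. Your argument is correct and self-contained. In the first part, the key step---identifying the sieve on $A'_i$ generated by $\{P_k \to A'_i\}_{k\in q'^{-1}(i)}$ with the pullback along $A'_i \to A_{p(i)}$ of the covering sieve coming from $pq'$---does go through, using disjointness of the fibers of $p$ together with axiom~(M) exactly as you outline; the stability axiom for covering sieves in a Grothendieck topology then finishes. In the second part you invoke both clauses of~(G) precisely where they are needed: the first to force $|q|$ surjective so that bijectivity of $|\sigma'|$ propagates to $|\sigma|$, and the second to ``unrefine'' the covering $pq'$ along the covering $q'$ to conclude that $p$ covers. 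One minor simplification: you do not need $\sigma$ to be an isomorphism to know that $q'$ is a covering sub-map, since $q'$ is a pullback of the covering sub-map $q$ and Proposition~\ref{prop:Twprop}(1) already handles this; the bijectivity of $|\sigma|$ is only needed to verify the move part of $f$.
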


In other words, if (G) is satisfied then $\SC(\C)$ satisfies the Saturation
Axiom.  Condition (G) is satisfied in all of the examples we have considered so
far; however, in future work it will be necessary to consider assemblers which
do not satisfy this condition.

To finish this section, we recall the notion of a simplicial assembler and its
Waldhausen $K$-theory.

\begin{definition} \lbl{def:simpass} A \textsl{simplicial closed assembler} is a
  functor $\C_\dot:\Delta^\op \rto c\ASB$.  A \textsl{morphism of simplicial closed
    assemblers} is a natural transformation of functors.  We define the
  Waldhausen $K$-theory spectrum by
  \[K^W(\C_\dot) = \hocolim_{[n]\in\Delta^\op} K^W(\C_n).\]
\end{definition}

\begin{example} 
  For any pointed set $S$ and any closed assembler $\C$, let 
  \[S \smash \C = \bigvee_{S \bs \{*\}} \C.\] This is functorial in $S$: given a
  map $f:S \rto T$ we get a map $S \smash \C \rto T \smash \C$ in the following
  manner.  For all $s\in S$ such that $f(s) \neq *$ the copy of $\C$ indexed by
  $s$ is mapped via the identity to the copy of $\C$ indexed by $f(s)$.  When
  $f(s) = *$ the copy of $\C$ indexed by $s$ is mapped to the initial object.

  Let $X_\dot$ be a pointed simplicial set.  Then for any
  closed assembler $\C$, the simplicial closed assembler $X_\dot \smash \C$ is
  defined by
  \[(X_\dot\smash\C)_k = X_k \smash \C.\]
  The simplicial maps are induced from the simplicial maps on $X_\dot$.
\end{example}

\section{A comparison of $K$ and $K^W$}  \lbl{sec:S.}

We now have two different definitions of $K_i(\C)$ for a closed assembler $\C$:
the definition from \cite{Z-Kth-ass} and the definition given here.  Our goal in
this section is to prove that they are equivalent.

\begin{theorem} \lbl{thm:2models} There is a natural transformation $\eta: K
  \Rto K^W$ such that for every closed assembler $\C$ and $k\geq 1$, the $k$-th
  component of $\eta_\C$ is a weak equivalence $K(\C)_k \rto K^W(\C)_k$.
\end{theorem}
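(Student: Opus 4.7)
The plan is to construct $\eta$ level-by-level as a natural inclusion of the \cite{Z-Kth-ass} model into the Waldhausen $S_\bullet$-construction, and then prove it is a weak equivalence at each level $k\geq 1$ by exploiting the canonical splittings of cofibration sequences in $\SC(\C)$ provided by Theorem~\ref{thm:Wald}.

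First, I would unpack the two spectra at level $k$. On the Waldhausen side the $k$-th space is (weakly equivalent to) $\hocolim_{[n]\in\Delta^{\op}} |wS_\bullet \SC((S^k)_n \smash \C)|$, and on the \cite{Z-Kth-ass} side $K(\C)_k$ is an analogous homotopy colimit over a Segal-style construction on $\W((S^k)_n \smash \C)$. By the remark following the definition of $\SC(\C)$, its subcategory of weak equivalences is $\W(\C)^{\op}$, so there is an evident natural map sending a zig-zag of weak equivalences in the old model to the corresponding simplex of $wS_\bullet$. This defines $\eta_{\C,k}$; naturality in $\C$ is automatic because $\Tw$, $\W$, $\SC$, and $S_\bullet$ are each functorial on $c\ASB$.

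To prove $\eta_{\C,k}$ is a weak equivalence for $k\geq 1$, I would show that both spectra satisfy the same Segal condition: for every pointed finite set $T$, the natural projection
\[|wS_\bullet \SC(T \smash \C)| \rto \prod_{t \in T\bs\{*\}} |wS_\bullet \SC(\C)|\]
is a weak equivalence. Proposition~\ref{prop:Twprop}(4) gives the Waldhausen-category decomposition of $\SC(T \smash \C)$ indexed by $T\bs\{*\}$, after which the Waldhausen additivity theorem produces the Segal condition; additivity applies precisely because Theorem~\ref{thm:Wald} guarantees canonical splittings of cofiber sequences. The analogous Segal condition is built into the \cite{Z-Kth-ass} construction, so once both spectra are exhibited as $\Gamma$-objects with matching values on finite pointed sets, $\eta_\C$ is a level-wise weak equivalence for all $k\geq 1$.

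The main obstacle will be the additivity step itself: promoting a single pointwise splitting of a cofiber sequence in $\SC(\C)$ to compatible splittings across all bisimplicial levels of $wS_\bullet$, in the presence of both the twist $\Tw$ and the subcategory of weak equivalences $\W(\C)^{\op}$. Level $k=0$ must be excluded for the familiar reason: $K^W(\C)_0$ computes the group completion of the abelian monoid of components of $\SC(\C)$, whereas $K(\C)_0$ need not yet be group-complete, and the comparison becomes an equivalence only after a single delooping.
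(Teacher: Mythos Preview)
Your outline has two genuine gaps.

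First, the identification of $K^W(\C)_k$ is wrong. By definition it is the $k$-fold iterated $S_\bullet$-construction $|wS_\bullet^{(k)}\SC(\C)|$, a realization over $(\Delta^{\op})^k$ of $[n_1,\ldots,n_k]\mapsto N_\bullet wS_{n_1}\cdots S_{n_k}\SC(\C)$; it is \emph{not} a homotopy colimit over a single $\Delta^{\op}$ of spaces $|wS_\bullet\SC((S^k)_n\smash\C)|$. Indeed, the latter description is essentially equivalent to the theorem you are trying to prove. Consequently there is no ``evident natural map sending a zig-zag of weak equivalences to the corresponding simplex of $wS_\bullet$'': an object of $\W(n^+\smash\C)$ is a tuple of objects of $\W(\C)$, whereas a point of $S_n\SC(\C)$ is a length-$n$ filtration with chosen quotients, and one must \emph{construct} a functor comparing them. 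The paper does this explicitly as the composite $C_n$ in Proposition~\ref{prop:levelex}, and then checks by hand that these assemble simplicially (Lemma~\ref{lem:simpassembly}) and spectrally (Lemma~\ref{lem:spectralassembly}).

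Second, and more seriously, your appeal to Waldhausen additivity does not deliver the equivalence you need. Additivity yields an equivalence only \emph{after} a further application of $|wS_\bullet(-)|$: it identifies $|wS_\bullet S_n\E|$ with $|wS_\bullet\E|^n$. What is required here is one level earlier, namely $|wS_n\SC(\C)|\simeq |w\SC(\C)|^n = |w\SC(n^+\smash\C)|$, and this does not follow from additivity or from the mere existence of splittings. The Segal condition you display, $|wS_\bullet\SC(T\smash\C)|\simeq\prod_t|wS_\bullet\SC(\C)|$, is immediate from Proposition~\ref{prop:Twprop}(4) and carries no information about $S_n$ versus $n^+\smash(-)$. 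The paper closes this gap by passing through the full subcategory $L_n\SC(\C)\subset S_n\SC(\C)$ of layered filtrations (Lemma~\ref{lem:5.3}) and the exact \emph{equivalence of Waldhausen categories} $L_n\SC(\C)\simeq\bigoplus_i W_i\SC(\C)$ (Proposition~\ref{prop:5.5}), together with the exact adjunction $W_i\SC(\C)\simeq\SC(\C)$ of Lemma~\ref{lem:pncnadj}. Because these are honest categorical equivalences and adjunctions, they give equivalences already on $|w(-)|$, which is exactly what your additivity argument cannot supply.
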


If we are given two $k$-simplicial spaces then a level equivalence gives an
equivalence on geometric realization, so we do a levelwise analysis of the
categories that assemble to construct $K(\C)$ and $K^W(\C)$.  First we recall
two results from \cite{zakharevich11}:

\begin{lemma}[{\cite[Lemma 5.3]{zakharevich11}}] \lbl{lem:5.3}
  Let $L_n\SC(\C)$ be the full subcategory of $S_n\SC(\C)$ containing all
  objects 
  \[A_1\racyccofib A_2 \racyccofib \cdots \racyccofib A_n\] in which all
  morphisms are layered (in the sense of \cite[Definition 5.1]{zakharevich11}).
  Then $L_n\SC(\C)$ is a Waldhausen category and the inclusion $L_n\SC(\C)
  \rcofib S_n\SC(\C)$ induces the identity morphism on $K$-theory.
\end{lemma}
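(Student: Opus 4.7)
The plan is to verify two things: that $L_n\SC(\C)$ inherits a Waldhausen structure from $S_n\SC(\C)$, and that the inclusion induces the identity (or at least a map naturally homotopic to the identity) on $K$-theory. Since $L_n\SC(\C)$ is a full subcategory, the Waldhausen structure reduces to checking closure properties: I would verify that the class of objects whose iterated cofibrations are all layered is closed under the pushouts needed for exactness. Because layered cofibrations should be precisely those realized (up to weak equivalence) as coproduct inclusions $A \rto A\sqcup C$ in $\Tw(\C^\circ)$ --- via the canonical splittings furnished by Theorem~\ref{thm:Wald} --- this reduces to closure of coproducts and of coproduct-style pushouts within the sub-map/move subcategories, which is immediate from Proposition~\ref{prop:Twprop}(1).

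For the $K$-theory statement, I would construct an exact layering functor
\[L : S_n\SC(\C) \longrightarrow L_n\SC(\C)\]
together with a natural weak equivalence from the identity of $S_n\SC(\C)$ to $i \circ L$, where $i$ denotes the inclusion. Given a chain $A_1 \cofib A_2 \cofib \cdots \cofib A_n$ with chosen cofibers $C_{i,i+1}$ supplied by Theorem~\ref{thm:Wald}, one sets
\[L(A_\bullet)_i \;=\; A_1 \sqcup C_{1,2} \sqcup \cdots \sqcup C_{i-1,i},\]
with the evident layered cofibrations between successive terms. The canonical splittings yield weak equivalences $A_i \simeq L(A_\bullet)_i$ compatible with the structure maps, assembling into the required natural transformation. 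Since a natural weak equivalence of exact functors induces a homotopy on $K$-theory, $K(i)\circ K(L)$ is homotopic to the identity of $K(S_n\SC(\C))$, while $L\circ i$ is (by construction) already the identity of $L_n\SC(\C)$, giving the result.

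The main obstacle I anticipate is making $L$ strictly functorial on morphisms of $S_n\SC(\C)$, rather than merely functorial up to coherent weak equivalence: the splittings guaranteed by Theorem~\ref{thm:Wald} are canonical only up to weak equivalence, and a morphism in $\SC(\C)$ is itself an equivalence class of zigzags $A \lsub A' \rmove B$, so the definition of $L$ on morphisms requires choosing compatible representatives. I would address this by fixing, once and for all, a functorial choice of complement for every covering sub-map in $\Tw(\C^\circ)$ --- such a choice exists because sub-maps are monomorphisms by Proposition~\ref{prop:Twprop}(2) and pullbacks of sub-maps are sub-maps by Proposition~\ref{prop:Twprop}(1), so the complement is determined by the fiber structure over $\FinSet$. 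With these choices pinned down, naturality of $L$ becomes a routine verification, and the natural weak equivalence $\mathrm{id} \Rightarrow i\circ L$ can be written down explicitly componentwise.
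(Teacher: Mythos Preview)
The paper does not prove this lemma here: it is quoted as \cite[Lemma 5.3]{zakharevich11}, and the text immediately following Proposition~\ref{prop:5.5} says explicitly that ``the proofs from \cite{zakharevich11} are stated for the case when $\C$ is a polytope complex, but they work almost verbatim for the case when $\C$ is an assembler as well, so we do not reproduce them here.'' There is therefore no in-paper argument to compare your proposal against; a genuine comparison would require consulting the cited source.

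That said, your outline has the standard shape such a proof takes---build an exact retraction $L:S_n\SC(\C) \to L_n\SC(\C)$ from the canonical splittings supplied by Theorem~\ref{thm:Wald}, and exhibit a natural weak equivalence $1 \Rightarrow iL$---and is almost certainly what \cite{zakharevich11} does. One simplification worth noting: your functoriality worry is somewhat overstated, because an object of $S_n\SC(\C)$ already carries \emph{chosen} subquotients $A_j/A_i$ as part of its data (this is built into Waldhausen's $S_\dot$), so $L$ should use those rather than re-selecting cofibers. The only remaining choice is then the weak equivalence $A_j/A_i \sqcup A_i \rwe A_j$, and in $\SC(\C)$ this is determined on the nose by the fiber structure over $\FinSet$, exactly as you suggest in your final paragraph.
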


\begin{proposition}[{\cite[Proposition 5.5]{zakharevich11}}] \lbl{prop:5.5}
  Let $W_n\SC(\C)$ be the full subcategory of $S_n\SC(\C)$ containing all
  objects 
  \[A_1\racyccofib A_2 \racyccofib \cdots \racyccofib A_n.\] Then $W_n\SC(\C)$
  is a Waldhausen category and we have an exact equivalence of Waldhausen
  categories
  \[\mathrm{St}: L_n\SC(\C) \rlto \bigoplus_{i=1}^n W_i\SC(\C) \,:\! CP.\]
\end{proposition}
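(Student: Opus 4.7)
The plan is to construct the stratification functor $\mathrm{St}$ and its inverse $CP$ explicitly, then verify that they form an exact equivalence of Waldhausen categories. The guiding idea is that a layered acyclic cofibration $A \racyccofib A'$ in $\SC(\C)$ should have a canonical decomposition $A' \simeq A \sqcup B$ in which $B$ collects the ``new'' components introduced by the cofibration; a length-$n$ chain of layered acyclic cofibrations then acquires a canonical filtration whose successive quotients are freshly introduced summands.

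First I would define $\mathrm{St}$ on objects. Given a chain $A_1 \racyccofib A_2 \racyccofib \cdots \racyccofib A_n$ in $L_n\SC(\C)$, the layered structure supplied by Definition~5.1 of \cite{zakharevich11} identifies, for each $i$, an object $B_i$ introduced at step $i$ (with $B_1 := A_1$). The object $B_i$ then propagates through the remaining $n-i$ layered cofibrations, yielding a canonical sub-chain of length $n-i+1$. Assembled across all $i$, these sub-chains give an object of $\bigoplus_{j=1}^n W_j\SC(\C)$. Conversely, given a tuple $(C^{(j)})_{j=1}^n$ with $C^{(j)} \in W_j\SC(\C)$, define $CP$ by reindexing so that $C^{(j)}$ starts at position $n-j+1$ and then taking levelwise coproducts in $\Tw(\C^\circ)$; these coproducts exist by Proposition~\ref{prop:Twprop}(1), and the resulting chain is layered by construction. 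At the object level, the compositions $CP\circ\mathrm{St}$ and $\mathrm{St}\circ CP$ are naturally isomorphic to the respective identities because coproduct and layered filtration are inverse operations on the indexing sets.

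The delicate step is the extension to morphisms. A morphism in $L_n\SC(\C)$ is a commutative ladder of cofibrations and a weak equivalence in the $\SC(\C)$-sense, and one must show that such a ladder restricts functorially to each layer. This reduces to checking that the layered decomposition is preserved by any morphism in $L_n\SC(\C)$, which in turn follows from the uniqueness of the decomposition $A' \simeq A \sqcup B$ coming from the combinatorial definition of $\Tw(\C^\circ)$: the indexing set of $B$ is the complement of the injection on underlying finite sets, hence determined by the morphism itself. Exactness of $\mathrm{St}$ and $CP$ then falls out, since cofibrations and weak equivalences in $\SC(\C)$ are characterized purely by their behavior on the underlying map of finite sets, and both coproducts and layer extraction respect these conditions coordinatewise.

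The main obstacle I anticipate is making the layered decomposition functorial in a way compatible with the Waldhausen structure on $L_n\SC(\C)$, including acting correctly on weak equivalences whose underlying map is the identity on indexing sets. This will require a careful unpacking of the equivalence relation defining morphisms in $\SC(\C)$ (two zigzags $A \lsub A' \rmove B$ agreeing up to a move) to ensure that the layer-by-layer description is independent of representatives. Once this combinatorial bookkeeping is pinned down, the rest is a straightforward verification that both functors preserve the classes of cofibrations and weak equivalences, and that the unit and counit of the adjunction are natural weak equivalences.
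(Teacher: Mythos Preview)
The paper does not supply its own proof of this proposition: immediately after stating Lemma~\ref{lem:5.3} and Proposition~\ref{prop:5.5} it says ``The proofs from \cite{zakharevich11} are stated for the case when $\C$ is a polytope complex, but they work almost verbatim for the case when $\C$ is an assembler as well, so we do not reproduce them here.'' So there is nothing in the present paper to compare against beyond the citation.

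That said, your outline is essentially the strategy one expects for this result and is consistent with what the cited proof does: stratify a layered chain by peeling off the ``new'' summand introduced at each step, and reassemble by coproduct. Your identification of the functoriality of the stratification as the main technical point is on target; the layered condition is precisely what makes the decomposition canonical on indexing sets, so the independence-of-representative check you flag does go through. One small correction: a morphism in $L_n\SC(\C)$ is not merely ``a commutative ladder of cofibrations and a weak equivalence'' but an arbitrary morphism in $S_n\SC(\C)$ between layered objects (since $L_n$ is a \emph{full} subcategory); you need $\mathrm{St}$ to act on all such morphisms, not just weak equivalences. This does not change your argument materially, since the layer-extraction is governed by the underlying set maps regardless.
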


The proofs from \cite{zakharevich11} are stated for the case when $\C$ is a
polytope complex, but they work almost verbatim for the case when $\C$ is an
assembler as well, so we do not reproduce them here.

\begin{lemma} \lbl{lem:pncnadj}
  Let $p_n:W_n\SC(\C) \rto \SC(\C)$ be the functor which takes $A_1\racyccofib
  \cdots \racyccofib A_n$ to $A_n$, and let $c_n: \SC(\C) \rto W_n\SC(\C)$ be
  the functor that takes $A$ to the diagram $A \req \cdots \req A$.  Then $(p_n
  \dashv c_n)$ is an exact adjunction.
\end{lemma}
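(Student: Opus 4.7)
My plan is to write out the hom-set bijection by hand, then separately verify that $p_n$ and $c_n$ are exact functors of Waldhausen categories.

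For the bijection, fix $X = (A_1\racyccofib A_2\racyccofib\cdots\racyccofib A_n)$ in $W_n\SC(\C)$ and $A\in\SC(\C)$. A morphism $X\rto c_n A$ in $W_n\SC(\C)\subset S_n\SC(\C)$ is by definition a natural transformation of such diagrams, so it consists of morphisms $\phi_i:A_i\rto A$ satisfying the compatibility $\phi_{i+1}\circ(A_i\racyccofib A_{i+1}) = 1_A\circ\phi_i = \phi_i$. The whole transformation is therefore determined by $\phi_n$, and conversely any morphism $\phi_n:p_n X = A_n\rto A$ in $\SC(\C)$ extends uniquely by precomposition with the given acyclic cofibrations. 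The resulting assignment
\[\Hom_{W_n\SC(\C)}(X, c_n A) \;\cong\; \Hom_{\SC(\C)}(p_n X, A)\]
is natural in both $X$ and $A$.

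For exactness of $p_n$, I would observe that cofibrations, weak equivalences, the zero object, and pushouts along cofibrations in $S_n\SC(\C)$, and hence in the full subcategory $W_n\SC(\C)$, are detected on the $n$-th column, so projecting onto that column preserves all of the Waldhausen structure. For exactness of $c_n$, the first thing to check is that the constant diagram $c_n A$ really lies in $W_n\SC(\C)$; this reduces to the statement that $1_A$ in $\SC(\C)$, represented by $A\lsub^{1_A}A\rmove^{1_A}A$, is both a covering sub-map and a move with bijective underlying map of finite sets, which is immediate from the definitions. Given a cofibration $f:A\rcofib B$ in $\SC(\C)$, every naturality square of $c_n f$ has both horizontal arrows equal to an identity, hence is trivially a pushout and in particular is a cofibration; the same argument shows $c_n$ preserves weak equivalences, the zero object, and pushouts along cofibrations.

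The whole argument is essentially formal once the definitions are unwound; the only mildly delicate point is checking that cofibrations in $S_n\SC(\C)$ are detected componentwise for the specific transformations $c_n f$ we need, which in this case is automatic because the relevant subquotients of $c_n f$ are either $f$ itself or the zero object. With that verified, both $p_n$ and $c_n$ are exact and the bijection above promotes $(p_n\dashv c_n)$ to an exact adjunction.
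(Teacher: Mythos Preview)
Your hom-set bijection is correct and equivalent to the paper's unit/counit description; the paper instead writes down the counit $p_nc_n = 1_{\SC(\C)}$ and the unit $(A_1\racyccofib\cdots\racyccofib A_n)\to(A_n\req\cdots\req A_n)$ given by the composites $g_i = f_{n-1}\circ\cdots\circ f_i$, and then checks the triangle identities directly. Either route establishes the adjunction.

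There is, however, a genuine omission. The paper takes ``exact adjunction'' to include the requirement that the unit and counit are natural \emph{weak equivalences}, and this is precisely what is used downstream: in Proposition~\ref{prop:levelex} the functor $c_n$ appears in a chain of exact equivalences, and one needs the adjunction to induce a homotopy equivalence after applying $|w\cdot|$. Exactness of $p_n$ and $c_n$ alone does not give this; one must observe that the counit is the identity and that each component $g_i$ of the unit is a composite of acyclic cofibrations, hence a weak equivalence, so the unit is a levelwise weak equivalence in $W_n\SC(\C)$. This is a one-line check, but you should add it.

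A minor phrasing issue: cofibrations, weak equivalences, and pushouts in $W_n\SC(\C)$ are \emph{levelwise}, not ``detected on the $n$-th column'' alone. What you need for exactness of $p_n$ is only that they are preserved by projection to level $n$, which follows from being levelwise; your conclusion is right but the justification should say ``levelwise'' rather than suggest level $n$ is privileged.
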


\begin{proof}
  $c_n$ is exact by definition, and $p_n$ is also exact once we notice that all
  cofibrations, weak equivalences, and pushouts are levelwise in $W_n\SC(\C)$.
  Thus all we need to check is that the functors are adjoints and that the unit
  and counit are natural weak equivalences.  As $p_nc_n = 1_{\SC(\C)}$ the
  counit is the identity.  The unit is represented by the diagram
  \begin{diagram}
    {A_1 & A_2 & \cdots & A_n \\ 
      A_n & A_n & \cdots & A_n \\};
    \acyccofib{1-1}{1-2}^{f_1} \acyccofib{1-2}{1-3}^{f_2}
    \acyccofib{1-3}{1-4}^{f_{n-1}}
    \eq{2-1}{2-2} \eq{2-2}{2-3} \eq{2-3}{2-4}
    \acyccofib{1-1}{2-1}_{g_1} \acyccofib{1-2}{2-2}_{g_2} 
    \eq{1-4}{2-4}
  \end{diagram}
  where $g_i = f_{n-1}\circ\cdots\circ f_i$.  Both the unit and counit are
  levelwise equivalences, and thus are weak equivalences in $W_n\SC(\C)$ and
  $\SC(\C)$, respectively.  Checking that they satisfy the relations to be the
  unit and counit of an adjunction is direct from the definition.
\end{proof}

\begin{definition}
  For any nonnegative integer $n$, let $n^+$ be the pointed set
  $\{*,1,\ldots,n\}$ with $*$ as its basepoint.  Let $S^1$ be the pointed
  simplicial set with $(S^1)_n = n^+$, and let $S^k$ be the smash product of
  $S^1$ with itself $k$ times.
  
  For any $k$-tuple of nonnegative integers $n_1,\ldots,n_k$ write
  \[S^k_{n_1,\ldots,n_k} = (S^1)_{n_1}\smashes (S^1)_{n_k}.\] 
  For any Waldhausen category $\E$ write 
  \[S_{n_1,\ldots,n_k}\E = S_{n_1}\cdots S_{n_k}\E.\]
\end{definition}

\begin{proposition} \lbl{prop:levelex}
  For any $k$-tuple of nonnegative integers $n_1,\ldots,n_k$, there is an exact
  functor of Waldhausen categories, natural in $\C$, 
  \[C_{n_1,\ldots,n_k}:\SC(n_1^+\smashes n_k^+\smash \C) \rto
  S_{n_1,\ldots,n_k}\SC(\C)\] which induces a homotopy equivalence after
  applying $|w\cdot|$.  
\end{proposition}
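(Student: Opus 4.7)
The plan is to exploit the wedge-decomposition of the source assembler via Proposition~\ref{prop:Twprop}(4) and construct $C_{n_1,\ldots,n_k}$ as the multi-dimensional ``layering'' functor taking a family of objects to their nested coproducts. By Proposition~\ref{prop:Twprop}(4), the source $\SC(n_1^+\smashes n_k^+\smash\C)$ is naturally isomorphic to the direct sum $\bigoplus_{\vec{i}}\SC(\C)$ of Waldhausen categories, indexed by multi-indices $\vec{i}=(i_1,\ldots,i_k)$ with $i_j\in\{1,\ldots,n_j\}$. I define $C_{n_1,\ldots,n_k}$ by sending a family $\{X_{\vec{i}}\}$ to the $k$-dimensional cube of cofibrations whose $(\vec{m})$-entry is $\coprod_{\vec{i}\leq\vec{m}} X_{\vec{i}}$, with inclusions of summands as structural cofibrations and complementary coproducts as chosen subquotients. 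This object lies in $L_{n_1}\cdots L_{n_k}\SC(\C)\subseteq S_{n_1,\ldots,n_k}\SC(\C)$; exactness is immediate since everything is performed componentwise using only the coproduct structure of $\SC(\C)$, and naturality in $\C$ is automatic.

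For the equivalence on $|w\cdot|$, I treat $k=1$ first. The functor $C_n$ factors as
\[\SC(n^+\smash\C) \stackrel{\sim}{\longrightarrow} \bigoplus_{i=1}^n \SC(\C) \xrightarrow{\bigoplus c_i} \bigoplus_{i=1}^n W_i\SC(\C) \xrightarrow{\mathrm{St}} L_n\SC(\C) \hookrightarrow S_n\SC(\C),\]
whose arrows are, in order: the equivalence of Proposition~\ref{prop:Twprop}(4); a coproduct of $|w\cdot|$-equivalences $c_i$ coming from the adjunctions of Lemma~\ref{lem:pncnadj} (whose unit and counit are weak equivalences); the exact equivalence of Proposition~\ref{prop:5.5}; and an inclusion whose $|w\cdot|$-equivalence follows from the observation that every object of $S_n\SC(\C)$ admits a weak equivalence to a layered one via iterated use of the canonical cofibration splittings of Theorem~\ref{thm:Wald}.

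For general $k$, I induct. Because $\SC(\C)$ has canonical splittings, so does $S_{n_2,\ldots,n_k}\SC(\C)$ (levelwise in the cube-coordinates), and the $k=1$ argument applies with $\SC(\C)$ replaced by $S_{n_2,\ldots,n_k}\SC(\C)$. This yields an equivalence $|wS_{n_1,\ldots,n_k}\SC(\C)| \simeq \prod_{i_1=1}^{n_1}|wS_{n_2,\ldots,n_k}\SC(\C)|$; combining with the inductive hypothesis $|wS_{n_2,\ldots,n_k}\SC(\C)| \simeq |w\SC(n_2^+\smashes n_k^+\smash\C)|$ and one further application of Proposition~\ref{prop:Twprop}(4) gives $|w\SC(n_1^+\smashes n_k^+\smash\C)| \simeq |wS_{n_1,\ldots,n_k}\SC(\C)|$. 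A diagram chase verifies this equivalence is precisely $|wC_{n_1,\ldots,n_k}|$.

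The main obstacle is the meta-observation that Lemma~\ref{lem:5.3}, Proposition~\ref{prop:5.5}, and Lemma~\ref{lem:pncnadj} extend from $\SC(\C)$ to arbitrary Waldhausen categories with canonical cofibration splittings, which is exactly the property exploited by the inductive step. The proofs of those results use only the splitting property, so the extension is formal; but tracking through precisely which features of $\SC(\C)$ are needed, and verifying that they persist under iterated $S_{n_j}$, is the technical heart of the argument.
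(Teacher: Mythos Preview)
Your base case is essentially the paper's (modulo the cosmetic choice of $c_i$ versus $c_{n-i}$ and of $\mathrm{St}$ versus its inverse $CP$). The inductive step, however, takes a genuinely different route.

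The paper peels off the \emph{innermost} smash factor: setting $\D = n_k^+\smash\C$, which is still an assembler, it applies the inductive hypothesis to $\D$ to obtain $C_{n_1,\ldots,n_{k-1}}:\SC(n_1^+\smashes n_{k-1}^+\smash\D)\to S_{n_1,\ldots,n_{k-1}}\SC(\D)$, and then postcomposes with $S_{n_1,\ldots,n_{k-1}}$ applied to the base-case functor $C_{n_k}:\SC(\D)\to S_{n_k}\SC(\C)$. For this to work the paper needs the base case to yield not merely a $|w\cdot|$-equivalence but an equivalence after $|wS_{m_1}\cdots S_{m_\ell}\cdot|$ for every tuple; this holds because $C_{n_k}$ is a composite of exact equivalences and an inclusion inducing the identity on $K$-theory (Lemma~\ref{lem:5.3}), both of which are stable under iterated $S_\dot$.

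You instead peel off the \emph{outermost} $S_{n_1}$ by replaying the $k=1$ argument with $S_{n_2,\ldots,n_k}\SC(\C)$ in place of $\SC(\C)$. This forces you to extend Lemma~\ref{lem:5.3} and Proposition~\ref{prop:5.5} to that Waldhausen category, which you correctly flag as the main obstacle. The extension is less formal than you suggest: the notion of ``layered'' in \cite{zakharevich11} is defined via the explicit sub-map/move factorization of morphisms in $\SC(\C)$, and one must verify that a levelwise analogue in $S_{n_2,\ldots,n_k}\SC(\C)$ behaves correctly---in particular that the canonical splittings of Theorem~\ref{thm:Wald} assemble compatibly across the $S_\dot$-coordinates. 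The paper's trick of staying inside the category of assemblers, where Lemma~\ref{lem:5.3} and Proposition~\ref{prop:5.5} are already established, sidesteps this entirely. Your route may well succeed, but it trades a short observation (that $n_k^+\smash\C$ is again an assembler) for a verification you do not carry out.
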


\begin{proof}
  If any of the $n_i$ is equal to zero, then both sides are just the trivial
  Waldhausen category, so the proposition clearly holds.  Thus we can assume
  that all $n_i$ are positive.

  We prove the proposition by induction on $k$.  In the base case $k=1$ we have
  an exact functor
  \[C_{n}:\SC(n^+\smash \C) \rto^\simeq \bigoplus_{i=1}^n \SC(\C)
  \rto^{\bigoplus_{i=1}^n c_{n-i}} \bigoplus_{i=1}^n W_i\SC(\C) \rto^{CP}
  L_n\SC(\C) \rto S_n\SC(\C),\] where the first three functors are exact
  equivalences, and the last one induces the identity morphism on $K$-theory.
  In particular, the composition induces a homotopy equivalence after applying
  $|wS_{n_1}\cdots S_{n_\ell}\cdot|$ for all tuples $(n_1,\ldots,n_\ell)$; the
  proposition follows by setting $\ell=0$; we mention the more general statement
  as it is necessary in the proof of the inductive step.

  Now suppose that we know the proposition holds up to $k-1$.  Letting $\D =
  n_k^+\smash \C$, by the induction hypothesis we have an exact functor
  \[C_{n_1,\ldots,n_{k-1}}:\SC(n_1^+\smashes n_{k-1}^+\smash \D) \rto
  S_{n_1,\ldots,n_{k-1}}\SC(\D)\] which is a homotopy equivalence after applying
  $|w\cdot|$.  By the base case we have an exact functor $\SC(\D) \rto
  S_{n_k}\SC(\C)$ which induces a homotopy equivalence after applying
  $|wS_{n_1}\cdots S_{n_{k-1}}\cdot|$.  Thus the composite
  \[C_{n_1,\ldots,n_k}:\SC(n_1^+\smashes n_{k-1}^+\smash \D)
  \rto^{C_{n_1,\ldots,n_{k-1}}} S_{n_1,\ldots,n_{k-1}}\SC(\D)
  \rto^{S_{n_1,\ldots,n_{k-1}}C_{n_k}} S_{n_1,\ldots,n_k}\SC(\C)\] is also
  a homotopy equivalence after applying $|w\cdot|$.  As every step of this
  construction is natural in $\C$, we see that the functors are also natural in
  $\C$.
\end{proof}

We show that the above functors $C_{n_1,\ldots,n_k}$ properly assemble into
$k$-simplicial functors, so that we have induced maps
\[\big|w\SC(S^k\smash \C)\big| \rwe \big|wS_\dot\cdots S_\dot \SC(\C)\big|\]
which are weak equivalences because they are levelwise weak equivalences.  To
check that this works, we just need to check that these maps commute properly
with the simplicial structure maps.

\begin{lemma} \lbl{lem:simpassembly}
  For all $1\leq j\leq k$ and for all $0\leq i \leq n_j$, the following diagram
  commutes: 
  \begin{diagram}[7em]
    {\SC(S^k_{n_1,\ldots,n_k} \smash \C) & S_{n_1,\ldots,n_k} \SC(\C) \\
      \SC(S^k_{n_1,\ldots,n_j-1,\ldots,n_k} \smash \C) &
      S_{n_1,\ldots,n_j-1,\ldots,n_k} \SC(\C) \\};
    \to{1-1}{1-2}^{C_{n_1,\ldots,n_k}}
    \to{2-1}{2-2}^{C_{n_1,\ldots,n_j-1,\ldots,n_k}}
    \to{1-1}{2-1}_{1,\ldots,d_i,\ldots,1} \to{1-2}{2-2}^{1,\ldots,d_i,\ldots,1}
  \end{diagram}
\end{lemma}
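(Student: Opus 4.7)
The plan is to induct on $k$, handling the inductive step by a formal diagram chase based on the naturality contained in Proposition~\ref{prop:levelex}, so that essentially all of the content lies in the base case $k=1$.

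For the inductive step, fix $1\leq j\leq k$ and $0\leq i\leq n_j$, and set $\D = n_k^+\smash\C$, so that by construction $C_{n_1,\ldots,n_k} = (S_{n_1,\ldots,n_{k-1}}C_{n_k})\circ C_{n_1,\ldots,n_{k-1}}$. When $j<k$ the face map $(1,\ldots,d_i,\ldots,1)$ touches only the first $k-1$ simplicial directions, so the square splits vertically into two smaller squares: the upper one commutes by the inductive hypothesis applied to the assembler $\D$, and the lower one commutes because $S_{n_1,\ldots,n_{k-1}}$ is functorial. When $j=k$ the face map $d_i$ is induced by the morphism of closed assemblers $\bar d_i : n_k^+\smash\C \rto (n_k-1)^+\smash\C$ that collapses one wedge summand to the initial object; the upper square then commutes by the naturality of $C_{n_1,\ldots,n_{k-1}}$ in its assembler argument (observed during the proof of Proposition~\ref{prop:levelex}), and the lower square is exactly the base-case assertion for $C_{n_k}$.

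For the base case $k=1$, recall that $C_n$ is the composite
\[\SC(n^+\smash\C)\rto^\simeq \bigoplus_{i=1}^n \SC(\C) \rto^{\bigoplus_{i=1}^n c_{n-i}} \bigoplus_{i=1}^n W_i\SC(\C) \rto^{CP} L_n\SC(\C) \rto S_n\SC(\C).\]
Proposition~\ref{prop:Twprop}(4) shows that the leftmost equivalence is natural in the pointed set $n^+$: a map of pointed sets yields a morphism of closed assemblers, whose image under the equivalence is the evident functor on direct sums. Labelling the nondegenerate simplices of $(S^1)_n$ as $f_1,\ldots,f_n$, a direct calculation shows that each $d_i$ either relabels the $f_k$'s by a shift or sends a single $f_k$ to the basepoint. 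It then remains to verify, by inspecting the definitions of $c_{n-i}$, $CP$, and the inclusion $L_n\rto S_n$, that this operation on $\bigoplus_i W_i\SC(\C)$ is intertwined with the standard face map of $S_\cdot\SC(\C)$, which for $0<i<n$ composes the $i$th cofibration, for $i=0$ drops $A_0$ and reindexes, and for $i=n$ passes to the quotient at the top of the chain.

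The principal obstacle is the combinatorial bookkeeping in the base case for the boundary face maps $d_0$ and $d_n$, under which $(S^1)_n\rto (S^1)_{n-1}$ loses a nondegenerate simplex rather than merely shifting labels; one must check that the resulting deletion of a summand on the direct-sum side is carried by $CP$ to the correct boundary truncation of the cofibration chain in $S_{n-1}\SC(\C)$. Once this is established for every $i$, the inductive step goes through mechanically.
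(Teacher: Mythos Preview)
Your approach is essentially the same as the paper's: induction on $k$, factoring $C_{n_1,\ldots,n_k}$ through the intermediate $S_{n_1,\ldots,n_{k-1}}\SC(\D)$, and splitting the inductive step into the cases $j<k$ and $j=k$ with the same justifications (inductive hypothesis plus naturality in $\C$, resp.\ the base case).  Two small imprecisions: the two sub-squares in the inductive step sit side by side (left/right, separated by the middle column $S_{n_1,\ldots,n_{k-1}}\SC(\D)$), not upper/lower; and for interior faces $0<i<n_k$ in the $j=k$ case the induced morphism $\bar d_i:n_k^+\smash\C\rto (n_k-1)^+\smash\C$ identifies two wedge summands rather than collapsing one to the initial object---but this does not affect the argument, since all you use is that it is a morphism of closed assemblers.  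The paper simply declares the base case ``straightforward from the definition of $C_n$ and $C_{n-1}$'' and does not carry out the combinatorial check you sketch.
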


\begin{proof}
  We prove this by induction on $k$.  First consider the base case $k=1$. We
  want to show that the following diagram commutes:
  \begin{diagram}
    {\SC((S^1)_n \smash \C) & S_n \SC(\C) \\ 
      \SC((S^1)_{n-1} \smash \C) & S_{n-1} \SC(\C) \\};
    \arrowsquare{C_n}{d_i}{d_i}{C_{n-1}}
  \end{diagram}
  This is straightforward from the definition of $C_n$ and $C_{n-1}$.

  Now suppose that we know the lemma for $k-1$, and consider it for $k$.  Let
  $\D = n_k^+\smash \C$. If $j<k$ the construction of $C_{n_1,\ldots,n_k}$ lets
  us rewrite the above diagram in the following manner:
  \begin{diagram}
    { \SC(S^{k-1}_{n_1,\ldots,n_{k-1}} \smash \D) &
      S_{n_1,\ldots,n_{k-1}}\SC(\D) &
      S_{n_1,\ldots,n_k}\SC(\C) \\
      \SC(S^{k-1}_{n_1,\ldots,n_j-1,\ldots,n_k}\smash \D) &
      S_{n_1,\ldots,n_j-1,\ldots,n_{k-1}}\SC(\D) &
      S_{n_1,\ldots,n_j-1,\ldots,n_k}\SC(\C) \\};
    \arrowsquare{C_{\cdots}}{1,\ldots,d_i,\ldots,1}{1,\ldots,d_i,\ldots,1}{C_{\cdots}}
    \to{1-2}{1-3}
    \to{2-2}{2-3}
    \to{1-3}{2-3}^{1,\ldots,d_i,\ldots,1}
  \end{diagram}
  The right-hand square commutes because $C_{n_k}:\SC(\D) \rto S_n\SC(\C)$ is an
  exact functor and thus induces a natural transformation of simplicial
  Waldhausen categories
  \[S_\dot\cdots S_\dot \SC(\D) \rto S_\dot\cdots S_\dot \SC(\C).\]
  The left-hand square commutes by the induction hypothesis.

  Now suppose that $j=k$ and let $\D'= (n_k-1)^+\smash \C$.  Then we can rewrite
  the square in the statement of the lemma as
  \begin{diagram}
    {\SC(S^{k-1}_{n_1,\ldots,n_{k-1}} \smash \D) & S_{n_1,\ldots,n_{k-1}}\SC(\D)
      &
      S_{n_1,\ldots,n_k}\SC(\C) \\
      \SC(S^{k-1}_{n_1,\ldots,n_{k-1}}\smash \D') &
      S_{n_1,\ldots,n_{k-1}}\SC(\D') & S_{n_1,\ldots,n_k-1}\SC(\C) \\};
    \arrowsquare{C_{\cdots}}{1,\ldots,d_i}{1,\ldots,d_i}{C_{\cdots}}
    \to{1-2}{1-3} \to{2-2}{2-3} \to{1-3}{2-3}^{1,\ldots,d_i}
  \end{diagram}
  Here the left-hand square commutes because the construction of $C_{\cdots}$ is
  natural in $\C$, and the vertical maps are induced from a morphism of
  assemblers $\D\rto \D'$.  The right-hand square is $S_{n_1}\cdots S_{n_{k-1}}$
  applied to the base case, and thus it also commutes.
\end{proof}

Lemma~\ref{lem:simpassembly} shows that we can assemble the $C$'s into morphisms
between realizations of $k$-simplicial sets.  Thus $|C_{\cdots}|$ gives a weak
equivalence $|w\SC(S^k\smash \C)| \rwe |wS_\dot\cdots S_\dot \SC(\C)|$ for all
$k$.  In order to prove Theorem~\ref{thm:2models} it suffices to show that these
maps assemble into a morphism of spectra.

\begin{lemma} \lbl{lem:spectralassembly}
  The following diagram commutes for all $k$:
  \begin{diagram}[5em]
    {S^1 \smash |w\SC(S^k\smash \C)| & S^1 \smash |wS_\dot^{(k)} \SC(\C)|
      \\
      {} |w\SC(S^{k+1}\smash \C)| & {} |wS_\dot^{(k+1)} \SC(\C)| \\};
    \arrowsquare{S^1\smash |C_{\cdots}|}{}{}{|C_{\cdots}|}
  \end{diagram}
\end{lemma}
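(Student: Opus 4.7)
My plan is to realize both vertical maps as bonding maps coming from a single $(k+1)$-simplicial structure, and then use Lemma~\ref{lem:simpassembly} to deduce commutativity of the resulting square.

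First I would describe the two bonding maps explicitly. On the right, $S^1\smash|wS_\dot^{(k)}\SC(\C)|\to|wS_\dot^{(k+1)}\SC(\C)|$ is Waldhausen's standard bonding map, given levelwise in $n_0\in(S^1)_{n_0}$ as the wedge-inclusion of $n_0$ copies of $|wS_\dot^{(k)}\SC(\C)|$ into $|wS_{n_0}\cdot S_\dot^{(k)}\SC(\C)|$, using that $wS_1\simeq w$ and $wS_0=*$. On the left, I would use $S^{k+1}=S^1\smash S^k$ to identify $|w\SC(S^{k+1}\smash\C)|$ with the realization of the $(k+1)$-simplicial space $(n_0,\ldots,n_k)\mapsto w\SC(S^1_{n_0}\smash S^k_{n_1,\ldots,n_k}\smash\C)$, and the bonding map is levelwise in $n_0$ the natural map from $S^1_{n_0}\smash w\SC(S^k\smash\C)$ induced by functoriality of $\SC$ in the assembler variable together with the wedge decomposition of Proposition~\ref{prop:Twprop}(4).

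Second, I would appeal to Lemma~\ref{lem:simpassembly} applied to the $(k+1)$-tuple $(n_0,n_1,\ldots,n_k)$ with $j=1$: the functors $C_{n_0,n_1,\ldots,n_k}$ assemble into a morphism of $(k+1)$-simplicial Waldhausen categories whose realization is $|C_{\cdots}|$ on the bottom of the square. On each wedge summand indexed by $s\in S^1_{n_0}\smallsetminus\{*\}$, the restriction of this assembled functor coincides with $C_{n_1,\ldots,n_k}$ by the naturality of $C$ in $\C$ guaranteed by Proposition~\ref{prop:levelex}. Both composites around the square are then the realization of this single $(k+1)$-simplicial morphism with the first direction collapsed according to the $S^1$-structure, so commutativity follows formally from the fact that the realization of a $(k+1)$-simplicial space can be computed by first realizing in the last $k$ directions and then smashing with $S^1$.

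The main obstacle is making precise the identification of the left-hand bonding map as the simplicial structure map of a $(k+1)$-simplicial space in a way compatible with the standard Waldhausen bonding map on the right. In particular, one must check that the map $S^1_{n_0}\smash w\SC(S^k\smash\C)\to w\SC(S^1_{n_0}\smash S^k\smash\C)$ built from Proposition~\ref{prop:Twprop}(4) corresponds under $|C_{\cdots}|$ to the Waldhausen wedge-inclusion on the other side. Once this base-case identification (essentially the $k=0$ instance) is in hand, the general case reduces cleanly to Lemma~\ref{lem:simpassembly} plus an induction on $k$ parallel to that used in Proposition~\ref{prop:levelex}.
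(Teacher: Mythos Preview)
Your approach is correct and is essentially a detailed unpacking of the paper's one-line proof, which simply asserts that the square commutes ``directly from the definition of the spectral structure maps'' (referring to \cite[Section~6]{Z-enrich-wald} for the explicit description). The key content you identify---that the $C_{n_0,\ldots,n_k}$ assemble via Lemma~\ref{lem:simpassembly} into a morphism of $(k+1)$-simplicial objects, and that both bonding maps are the standard inclusion of the $1$-level in the first simplicial direction---is exactly what underlies that one line.
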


\begin{proof}
  This follows directly from the definition of the spectral structure maps.  For
  an explicit description of these for the $S_\dot$ construction, see
  \cite[Section 6]{Z-enrich-wald}.
\end{proof}

We now have all of the pieces of the proof of the main theorem:

\begin{proof}[Proof of Theorem~\ref{thm:2models}]
  We define the map $\eta_\C:K(\C) \rto K^W(\C)$ componentwise by
  \[|C_{\cdots}|: |\W(S^k\smash \C)| = |w\SC(S^k \smash \C)| \rwe
  |wS_\dot^{(k)}\SC(\C)|,\] where the equality follows because $\W(\D) =
  (w\SC(\D))^\op$ for all closed assemblers $\D$.  By
  Proposition~\ref{prop:levelex} and Lemma~\ref{lem:simpassembly} these maps are
  well-defined weak equivalences.  We also know, by
  Lemma~\ref{lem:spectralassembly} that these assemble into well-defined
  morphisms of spectra.  That $\eta$ is a natural transformation follows from
  Proposition~\ref{prop:levelex}.
\end{proof}

Applying \cite[Theorem C]{Z-Kth-ass} we conclude the following:

\begin{corollary}
  For any simplicial closed assembler $\C_\dot$, $K(\C_\dot) \rto K^W(\C_\dot)$
  is a weak equivalence.  For any morphism of simplicial closed assemblers
  $g:\D_\dot \rto \C_\dot$,
  \[K^W(\D_\dot) \rto K^W(\C_\dot) \rto K^W((\C/g)_\dot)\]
  is a cofiber sequence.
\end{corollary}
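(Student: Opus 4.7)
The plan is to reduce both assertions to Theorem~\ref{thm:2models} combined with \cite[Theorem C]{Z-Kth-ass}. For the weak equivalence statement, I would use that $\eta$ is natural in $\C$, so applying it degreewise to a simplicial closed assembler $\C_\dot$ yields a morphism of simplicial spectra whose geometric realization is a map $K(\C_\dot) \rto K^W(\C_\dot)$. Both sides are defined, degree by degree, via the same $\hocolim$ formula over $\Delta^\op$, so it suffices to exhibit a degreewise stable equivalence $K(\C_n) \rwe K^W(\C_n)$. Theorem~\ref{thm:2models} provides exactly this: its $k$-th component is a weak equivalence for every $k\geq 1$, and such a map of spectra is automatically a stable equivalence, since stable homotopy groups can be computed from any cofinal collection of levels. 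Because the homotopy colimit of a simplicial diagram of spectra preserves levelwise stable equivalences, the induced map on hocolims is itself a weak equivalence of spectra.

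For the cofiber sequence, I would invoke \cite[Theorem C]{Z-Kth-ass}, which furnishes the corresponding cofiber sequence
\[K(\D_\dot) \rto K(\C_\dot) \rto K((\C/g)_\dot)\]
in the original $K$-theory functor. Naturality of $\eta$ in the morphism $g$ then produces a commutative ladder between this sequence and the candidate $K^W$-sequence, and by the first part of the corollary just proved, all three vertical arrows are weak equivalences. Since cofiber sequences of spectra are preserved under termwise weak equivalences, the bottom row is also a cofiber sequence.

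The only delicate point in this argument is the passage from ``pointwise weak equivalence on the $k$-th space for every $k\geq 1$'' (the form Theorem~\ref{thm:2models} delivers) to ``stable equivalence of spectra.'' This is immediate from the colimit description $\pi_n X = \mathrm{colim}_k \pi_{n+k} X_k$ of the stable homotopy groups, so no new work is required, and I would simply remark on it rather than belabor it.
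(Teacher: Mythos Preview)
Your argument is correct and matches the paper's approach: the paper simply records the corollary as an immediate consequence of Theorem~\ref{thm:2models} together with \cite[Theorem C]{Z-Kth-ass}, leaving to the reader exactly the unpacking you supply (naturality of $\eta$, passage to hocolims, and transfer of the cofiber sequence along the levelwise equivalences). Your remark about upgrading ``weak equivalence on the $k$-th space for $k\geq 1$'' to a stable equivalence is the only step not entirely trivial, and you handle it correctly.
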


\section{$K_1$ of an assembler} \lbl{sec:K1}

In this section we give a combinatorial description of $K_1$ of an assembler,
similar to the description of $K_0$ in \cite[Theorem A]{Z-Kth-ass}.  As the
structure of $K_1$ is much more complicated than the structure of $K_0$, the
associated description must also be more complicated.  However, it is possible
to make the description simple enough that it can be used to compute the
boundary homomorphism $K_1((\C/g)_\dot) \rto K_0(\D)$.  In order to write down
this description we must first establish some terminology.

The results in this section are inspired by the work of Muro and Tonks in
\cite{murotonks07, murotonks08}.  

\begin{definition}
  A \textsl{stable quadratic module} $C_*$ consists of the data of a pair of
  groups $C_0$ and $C_1$ together with a homomorphism $\partial: C_1 \rto \C_0$
  and a bilinear form $\<\cdot,\cdot\>:C_0^\ab \otimes C_0^\ab \rto C_1$
  satisfying the following relations:
  \begin{itemize}
  \item[(SQ1)] for $c_0,d_0$ in $C_0$, $\partial\<c_0,d_0\> = [d_0,c_0]$,
  \item[(SQ2)] for $c_1,d_1$ in $C_1$, $\<\partial c_1, \partial d_1\> =
    [d_1,c_1]$, and
  \item[(SQ3)] for $c_0,d_0$ in $C_0$, $\<c_0,d_0\>\<d_0,c_0\> = 0$.
  \end{itemize}
  Here, $[x,y] = x^{-1}y^{-1}xy$.
\end{definition}

Stable quadratic modules were first introduced in \cite{baues91}.  However, we
use the convention for $\<\cdot,\cdot\>$ used in \cite{murotonks07,murotonks08}
and switch to multiplicative notation to emphasize that $C_0$ and $C_1$ are not
necessarily abelian.  In any stable quadratic module there is a right action
of $C_0$ on $C_1$ defined by
\[c_1^{c_0} = c_1\<c_0,\partial c_1\>.\] In addition, directly from the axioms
it follows that $\im \<\cdot,\cdot\>$ and $\ker \partial$ are central in $C_1$.
In this section we will be constructing stable quadratic modules via generators
and relations.  Note that we will not be able to write down a complete set of
relations induced by the stable quadratic module structure; to see a detailed
discussion of stable quadratic modules via generators and relations, see
\cite[Appendix A]{murotonks07}.

We write
\[\tau_{A_0,B_0} \anno{def}{=} B_0 \sqcup A_0 \rwe A_0 \sqcup B_0.\]

\begin{definition}
  Let $\E_\dot$ be a simplicial Waldhausen category.  We define the stable
  quadratic module $\curD_*\E_\dot$ to be the stable quadratic module generated
  by 
  \begin{itemize}
  \item a generator $[A_0]$ for every object $A_0$ in $\E_0$ in degree $0$, 
  \item a generator $[A_1]$ for every object $A_1$ in $\E_1$ in degree $1$, and 
  \item a generator $[A_0 \rwe B_0]$ for every weak equivalence in $\E_0$ in
    degree $1$.
  \end{itemize}
  On the generators, we define
  \[\partial[A_1] = [d_0A_1]^{-1}[d_1A_1] \qquad\hbox{and}\qquad \partial[A_0
  \rwe B_0] = [B_0]^{-1}[A_0]\]
  and
  \[\<[A_0],[B_0]\> = [\tau_{A_0,B_0}].\]
  These satisfy the following relations:
  \begin{itemize}
  \item[(A1)] For all objects $A_0$ in $\E_0$, $[s_0A_0] = [A_0 \req A_0] = 1$.
  \item[(A2)] For all objects $A_0,B_0$ in $\E_0$, $[A_0 \sqcup B_0] = [A_0][B_0]$.
  \item[(A3)] For every composable pair of weak equivalences $A_0 \rwe^f B_0
    \rwe^g C_0$, 
    \[ [A_0 \rwe^{gf} C_0] = [B_0 \rwe^g C_0][A_0 \rwe^f B_0].\]
  \item[(A4)] For every object $A_2$ in $\E_2$, $[d_1A_2] = [d_0A_2][d_2A_2]$.
  \item[(A5)] For every weak equivalence $A_1 \rwe B_1$ in $\E_1$, 
    \[[B_1][d_1A_1 \rwe d_1B_1] = [d_0A_1 \rwe d_0B_1][A_1].\]
  \item[(A6)] For all objects $C_0$ and weak equivalences $A_0 \rwe^f B_0$ in
    $\E_0$, 
    \[[C_0 \sqcup A_0 \rwe^{1_{C_0}\sqcup f} C_0 \sqcup B_0] = [A_0 \rwe^f
    B_0].\]
    For any object $A_1$ in $\E_1$, 
    \[[A_1] = [s_0C_0 \sqcup A_1].\]
  \item[(A7)] For any two objects $A_1,B_1$ in $\E_1$,
    \[[A_1 \sqcup B_1] = [B_1][A_1\sqcup s_0d_1B_1].\]
  \end{itemize}
\end{definition}

\begin{lemma} \lbl{lem:Dfunc}
  $\curD_*$ is a functor from the category of Waldhausen categories and exact
  functors to the category of stable quadratic modules.  Exact equivalences of
  categories induce homotopy equivalences of stable quadratic modules.
\end{lemma}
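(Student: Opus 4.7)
The plan is first to construct $\curD_*F$ on generators and check that relations (A1)-(A7) are preserved, and then to use a quasi-inverse of an exact equivalence to build an explicit homotopy inverse.

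For functoriality, given an exact functor $F : \E_\dot \rto \E'_\dot$, I define $\curD_*F$ on generators by
\[
[A_0] \mapsto [FA_0], \qquad [A_1] \mapsto [FA_1], \qquad [A_0 \rwe B_0] \mapsto [FA_0 \rwe FB_0],
\]
and then check two things: (i) $\partial$ and $\<\cdot, \cdot\>$ are respected on generators, and (ii) each of the defining relations (A1)-(A7) holds after applying $F$. For (i), $\partial$ is preserved because $F$ commutes with the simplicial face maps $d_0, d_1$ and carries weak equivalences to weak equivalences, while the bracket is preserved because $F$ preserves coproducts and hence sends the symmetry $\tau_{A_0, B_0}$ to $\tau_{FA_0, FB_0}$. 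For (ii), each relation involves only structure that $F$ preserves: coproducts in (A2), (A6), (A7); simplicial face and degeneracy maps in (A1), (A4), (A5), (A7); and composition and identity of weak equivalences in (A3). Thus the assignment extends uniquely to a morphism of stable quadratic modules, and the identities $\curD_*(1) = 1$ and $\curD_*(GF) = \curD_*G \circ \curD_*F$ follow immediately on generators.

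For the second statement, let $F$ be an exact equivalence with quasi-inverse $G$ and natural weak equivalences $\eta : 1 \Rto GF$ and $\epsilon : FG \Rto 1$. A morphism of stable quadratic modules is a homotopy equivalence iff it induces isomorphisms on $\pi_0 = \coker \partial$ and $\pi_1 = \ker \partial$, which one establishes by producing a two-sided track inverse in the sense of \cite{murotonks07}. I build a track $h : \curD_*(GF) \Rto 1_{\curD_* \E_\dot}$ by setting $h([A_0]) = [\eta_{A_0}]$. By the formula for $\partial$ on weak-equivalence generators, $\partial h([A_0]) = [GFA_0]^{-1}[A_0]$ matches exactly the required discrepancy between $\curD_*(GF)$ and the identity. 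Extending $h$ to a crossed homomorphism via the $C_0$-action $c_1^{c_0} = c_1 \<c_0, \partial c_1\>$, and checking compatibility with $\<\cdot, \cdot\>$ from the naturality of $\eta$ against $\tau_{A_0, B_0}$, yields a valid track; the analogous construction using $\epsilon$ handles the other side.

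The main obstacle is checking that $h$ is well-defined on the degree-$1$ generators $[A_1]$ coming from $\E_1$, and not merely on objects of $\E_0$. Because $\eta$ is natural at every simplicial level, it supplies weak equivalences $\eta_{A_1} : A_1 \rwe GFA_1$ in $\E_1$; relation (A5), applied to these, relates the degree-$0$ data $h(d_0 A_1)$ and $h(d_1 A_1)$ to the degree-$1$ discrepancy $\partial[\eta_{A_1}] = [GFA_1]^{-1}[A_1]$ in a coherent way, and relation (A6) handles the degeneracy behaviour. Once this coherence is in hand, the verification that $h$ descends to a morphism of stable quadratic modules and hence exhibits the homotopy inverse is a direct manipulation of the stable quadratic module axioms.
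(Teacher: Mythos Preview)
The paper's proof is a one-line citation: ``This follows by the same proof as \cite[Theorem 3.2]{murotonks07}.'' Your sketch is essentially an unpacking of that argument --- define the induced map on generators, check the relations, and for an exact equivalence build a track from the natural weak equivalence $\eta$ --- so the strategy matches.

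The one place your write-up goes off the rails is the final paragraph. A homotopy (track) of stable quadratic modules is a map $h\colon C_0 \to C_1$; it is \emph{not} defined on degree-$1$ generators at all. What you actually need to verify there is the compatibility condition on $C_1$, namely that $[A_1]$ and $[GFA_1]$ (and likewise $[A_0 \rwe B_0]$ and its image) differ by the prescribed expression in $h(\partial[A_1])$. Your instinct to use (A5) applied to $\eta_{A_1}\colon A_1 \rwe GFA_1$ is exactly right for this, but note that $[\eta_{A_1}]$ is not a generator of $\curD_1\E_\dot$ (weak equivalences in $\E_1$ are not among the generators), so the expression ``$\partial[\eta_{A_1}]$'' has no meaning. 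Relation (A5) is what substitutes for this: it reads $[GFA_1][\eta_{d_1A_1}] = [\eta_{d_0A_1}][A_1]$, which is precisely the identity you need to verify the $C_1$-compatibility for generators of the form $[A_1]$. Rewriting the paragraph in those terms would make the argument correct.
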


\begin{proof}
  This follows by the same proof as \cite[Theorem 3.2]{murotonks07}.
\end{proof}

\begin{proposition} \lbl{prop:1typeE.}  Let $\E_\dot$ be a simplicial Waldhausen
  category satisfying the following additional conditions:
  \begin{enumerate}
  \item $\E_0$, $\E_1$ and $\E_2$ have strictly associative and unital
    coproducts which are compatible with the simplicial structure maps.
  \item There exists a set $S_0$ of objects of $\E_0$ which freely generates the
    monoid of objects of $\E_0$.
  \item All cofiber sequences in $\E_0$ split compatibly up to weak equivalence,
    in the sense that for every cofiber sequence $A \rcofib B \rfib B/A$ there
    exists a morphism $\alpha:B/A \rwe B$ which fits into a commutative diagram
    \begin{diagram-fixed}
      {A & B/A \sqcup A & B/A \\ A & B & B/A \\};
      \eq{1-1}{2-1} \eq{1-3}{2-3} \we{1-2}{2-2}_\alpha
      \cofib{1-1}{1-2} \cofib{2-1}{2-2}
      \fib{1-2}{1-3} \fib{2-2}{2-3}
    \end{diagram-fixed}
    such that for every diagram
    \begin{diagram}
      {  & & C/B \\
        & B/A & C/A \\
        A & B & C \\};
      \cofib{3-1}{3-2} \cofib{3-2}{3-3} \cofib{2-2}{2-3}
      \fib{3-2}{2-2} \fib{3-3}{2-3} \fib{2-3}{1-3}
    \end{diagram}
    the square
    \begin{diagram}[4em]
      { C/B \sqcup B/A \sqcup A & C/B \sqcup B \\
        C/A \sqcup A & C \\};
      \arrowsquare{1_{C/B} \sqcup \alpha}{\alpha\sqcup
        1_A}{\alpha}{\alpha} 
    \end{diagram}
    commutes.
  \end{enumerate}
  Then $\curD_*\E_\dot$ encodes the $1$-type of $K(\E_\dot)$, in the sense that 
  \[K_0(\E_\dot) \cong \coker \partial \qquad\hbox{and}\qquad K_1(\E_\dot) \cong
  \ker \partial,\]
  and the first Postnikov invariant can be computed from $\curD_*\E_\dot$.
\end{proposition}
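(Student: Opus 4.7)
The plan is to follow Muro and Tonks \cite{murotonks07,murotonks08}, who established the analogous result for a single Waldhausen category with splitting cofiber sequences, and lift their argument to the simplicial setting. For such a category $\E$, they construct a stable quadratic module with generators for objects in degree $0$ and generators for weak equivalences (and, in general, for cofiber sequences) in degree $1$, and prove that this encodes the $1$-type of $K(\E)$. My first step will be to apply their construction levelwise in $[n]\mapsto \E_n$ to obtain a simplicial stable quadratic module. Since $K(\E_\dot)\simeq \hocolim_n K(\E_n)$ and the Muro-Tonks assignment is homotopical (compare Lemma~\ref{lem:Dfunc}), the $1$-type of $K(\E_\dot)$ is captured by the homotopy colimit of this simplicial quadratic module in the homotopy category of stable quadratic modules.

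Next I would simplify the Muro-Tonks presentation at each level using hypotheses (1)--(3). Hypothesis (3) identifies every cofiber sequence $A\rcofib B\rfib B/A$ canonically with the split one $A \rcofib A\sqcup B/A \rfib B/A$ via the equivalence $\alpha$; the coherence square in (3) ensures this identification is compatible with composites of cofibrations, so the cofiber-sequence generators collapse to pairs of objects and the corresponding relation reduces to (A2). Hypothesis (1) makes (A2) hold as a strict equality of generators, and hypothesis (2) guarantees a free generating set $S_0$ so that no hidden relations arise from identifications among objects. Relations (A1) and (A3) are then exactly the surviving unit and composition relations from Muro-Tonks' degree-$1$ generators.

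The third step is to compute the homotopy colimit over $\Delta^\op$ explicitly. In degree $0$ the generators come from $\E_0$, and in degree $1$ one acquires two kinds of generators: the Muro-Tonks generators $[A_0 \rwe B_0]$ from weak equivalences in $\E_0$, and new generators $[A_1]$ for $A_1\in \E_1$ with boundary $[d_0A_1]^{-1}[d_1A_1]$ reflecting the two face maps $d_0,d_1\colon \E_1\rto \E_0$. The $2$-simplex composition law $[d_1A_2]=[d_0A_2][d_2A_2]$ for $A_2\in \E_2$ is then exactly (A4); naturality of weak equivalences in the simplicial direction yields (A5); degeneracy compatibility yields (A1) and (A6); and the interaction of Muro-Tonks' bilinear form $\<\cdot,\cdot\>$ with coproducts in $\E_1$ gives (A7). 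Hypothesis (1) on compatibility of coproducts with simplicial structure maps is what makes each of these relations hold strictly rather than up to higher coherence.

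The hard part will be establishing that (A1)--(A7) present the $1$-type completely, with no additional relations coming from $\E_n$ for $n\geq 3$ or from higher Postnikov data. To handle this I would adapt Muro-Tonks' $2$-skeletal argument: hypothesis (3) makes the splittings $\alpha$ sufficiently coherent that the pasting diagrams arising from $3$-simplices commute on the nose in the $1$-type, so the truncation of the homotopy colimit to a stable quadratic module loses no information beyond (A1)--(A7). Once completeness is established, the identifications $K_0(\E_\dot)\cong \coker\partial$ and $K_1(\E_\dot)\cong \ker\partial$ follow from the definition of the homotopy groups of a stable quadratic module, and the first Postnikov invariant is extracted from $\<\cdot,\cdot\>$ exactly as in \cite[Appendix A]{murotonks07}.
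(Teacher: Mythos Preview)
Your plan differs in a substantive way from the paper's, and the difference matters.

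You propose to apply Muro--Tonks levelwise to each $\E_n$, obtain a simplicial stable quadratic module, and then compute its homotopy colimit over $\Delta^\op$. The step ``the $1$-type of $K(\E_\dot)$ is captured by the homotopy colimit of this simplicial quadratic module'' is asserted but not argued: passage to $1$-types does not obviously commute with geometric realization of simplicial spectra, and you give no model for homotopy colimits in stable quadratic modules. Your derivation of (A4)--(A7) from this unspecified colimit is correspondingly vague.

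The paper avoids this entirely by a bisimplicial trick. It sets $Y_{mn} = N_n\, wS_m\,\E_n$, diagonalizing the nerve direction with the simplicial direction of $\E_\dot$, so that $|Y_{\dot\dot}| \simeq K(\E_\dot)_1$. Since $Y_{\dot\dot}$ is horizontally reduced and carries the coproduct monoid structure, the Muro--Tonks machinery (their Lemma~4.6 and the method on p.~18) applies \emph{directly} to $Y_{\dot\dot}$ and reads off a presentation $D_*\E_\dot$ with generators $[A_0]$, $[A_1\rwe B_1]$, $[A_0\rcofib B_0\rfib B_0/A_0]$ and relations (R1)--(R9); adding (R10) gives $D_*^+\E_\dot$, and Theorem~\ref{thm:genmurotonks} records that this already encodes the $1$-type. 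No colimit of quadratic modules is ever taken.

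The actual content of the proof of Proposition~\ref{prop:1typeE.} is then a purely algebraic isomorphism $D_*^+\E_\dot \cong \curD_*\E_\dot$, given by explicit mutually inverse maps
\[
f([A_1\rwe B_1]) = [B_1][d_1A_1\rwe d_1B_1],\qquad f([A_0\rcofib B_0\rfib B_0/A_0]) = [B_0/A_0\sqcup A_0 \rwe^{\alpha} B_0],
\]
and $g([A_1]) = [A_1\req A_1]$, $g([A_0\rwe B_0]) = [s_0A_0\rwe s_0B_0]$. Hypothesis~(3) enters precisely here: it supplies the splitting $\alpha$ needed to define $f$ on cofiber-sequence generators, and the compatibility square is exactly what makes $f$ well-defined and makes the two sides of (R8) land on the \emph{same} morphism $[C_0/B_0\sqcup B_0/A_0\sqcup A_0 \rwe C_0]$. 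Relations (A1)--(A7) are then checked to be the image of (R3)--(R10) under $f$, and conversely (R3)--(R10) are recovered from (A1)--(A7) via $g$ (e.g.\ (A4) comes from (R6) applied to $A_2\req A_2\req A_2$, and (A5) from (R6) applied to $s_0A_1\rwe s_0B_1\req s_0B_1$ together with $s_1A_1\req s_1A_1\rwe s_1B_1$).

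In short: rather than realizing a simplicial diagram of quadratic modules, the paper chooses a bisimplicial model to which Muro--Tonks applies verbatim, and then does a concrete generators-and-relations comparison. Your outline would need a genuine theory of homotopy colimits of stable quadratic modules, plus a proof that the $1$-type functor commutes with them, before it could reach the same conclusion.
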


We postpone the proof of this proposition to Section~\ref{app:murotonks}.

\begin{remark}
  Conditions (1) and (2) in the proposition are present in order to simplify
  computations.  A levelwise construction analogous to \cite[Proposition
  4.3]{murotonks08} should work on any simplicial Waldhausen category to produce
  one where (1) and (2) hold.  However, the third condition is nontrivial and
  does not hold in most Waldhausen categories.
\end{remark}

Now we prove a result analogous to that of \cite[Theorem 2.2]{murotonks08},
which constructs a diagram such that every element of $K_1$ of a Waldhausen
category is represented by a diagram of that shape.

\begin{definition}
  Let $A_0,B_0$ be objects in $\E_0$ and $V_1$ be an object in $\E_1$.  Suppose
  that there exists a weak equivalence $f:A_0\rwe B_0  \sqcup d_1V_1 $ in $\E_0$.
  Then we write $\{f_{V_1}:A_0 \rwe B_0\}$ for the element in $\curD_1\E_\dot$
  represented by
  \[[V_1][A_0 \rto^f B_0 \sqcup d_1V_1].\]
  For objects $V_1,W_1$ in $\E_1$ such that $d_0V_1 = d_0W_1$ we use the shorthand
  $\{f_{V_1},g_{W_1}:A_0 \rwe B_0\}$
  for the element
  \[\{f_{V_1}:A_0 \rwe B_0\}^{-1}\{g_{W_1}:A_0 \rwe B_0\}.\]
\end{definition}

Note that
\[ \partial(\{f_{V_1},g_{W_1}:A_0 \rwe B_0\}) = 0.\] Thus $\{f_{V_1},g_{W_1}:A_0
\rwe B_0\}$ represents an element of $K_1(\E_\dot)$, and is central in
$\curD_1\E_\dot$.

The rest of this section relies extensively on relations (B1)-(B6) proved in
Lemma~\ref{lem:Brels}.  For the reader interested in the technical details, we
recommend consulting that lemma first; we postpone stating and proving this
lemma, as it is technical and does not affect the overall understanding of the
results in this section.

\begin{lemma} \lbl{lem:addingcurly}
  Suppose that $V_1,W_1,X_1,Y_1$ are objects in $\E_1$ such that $d_0V_1 =
  d_0W_1$ and $d_0X_1 = d_0Y_1$.  
  Then 
  \[\begin{array}{l}
    \{f_{V_1},g_{W_1}:A_0 \rwe B_0\}\{r_{X_1},s_{Y_1}:C_0 \rwe
    D_0\} = \\\qquad
    \{(f\sqcup r)_{V_1 \sqcup X_1}, (g\sqcup s)_{W_1\sqcup Y_1}:A_0 \sqcup C_0
    \rwe B_0 \sqcup D_0\}.\end{array}\]
  If in addition $D_0 = A_0$ then this is also equal to
  \[\{(fr)_{X_1\sqcup V_1}, (gs)_{Y_1\sqcup W_1}: C_0 \rwe B_0\}.\]
\end{lemma}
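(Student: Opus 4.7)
The plan is to establish the first identity by a direct expansion using (A1)--(A7) together with the technical relations (B1)--(B6) of Lemma~\ref{lem:Brels}, and then to deduce the second identity by applying (A3) to factor the composition $fr$. Throughout, the key structural input is that $\{f_{V_1}, g_{W_1}:A_0 \rwe B_0\}$ lies in $\ker\partial$ (precisely because $d_0V_1 = d_0W_1$ makes the boundaries of $[V_1]^{-1}[W_1]$ and of the two weak equivalences cancel), and hence, as noted right after its definition, is central in $\curD_1\E_\dot$.

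Expanding the left-hand side of the first identity via $\{-,-\} = \{-\}^{-1}\{-\}$ reduces the claim to a product formula for the single-weak-equivalence generators:
\[
\{f_{V_1}:A_0 \rwe B_0\}\,\{r_{X_1}:C_0 \rwe D_0\} = \xi\cdot\{(f\sqcup r)_{V_1\sqcup X_1}:A_0\sqcup C_0 \rwe B_0\sqcup D_0\},
\]
where $\xi\in\curD_1\E_\dot$ is a ``correction'' built entirely from object-level data---products of $[-]$-classes of $d_iV_1, d_iX_1, B_0, D_0$ and the bilinear swap classes $\<-,-\>$---and in particular does not depend on the morphisms $f,r$. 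The relations (B1)--(B6) are designed precisely to perform this bookkeeping. Applying the same derivation with $(g,W_1,s,Y_1)$ in place of $(f,V_1,r,X_1)$ yields an analogous $\xi'$; since $d_0V_1 = d_0W_1$ and $d_0X_1 = d_0Y_1$, and the $d_1$-content enters only through central $\<-,-\>$-classes that differ from their $W_1,Y_1$-counterparts by $\partial$-boundaries (which are themselves annihilated by (SQ3) once paired), the corrections cancel in the ratio and we obtain the first identity.

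For the second identity, where $D_0 = A_0$, I would factor $fr:C_0 \rwe B_0 \sqcup d_1(X_1\sqcup V_1)$ as $r:C_0 \rwe A_0 \sqcup d_1X_1$ followed by $f\sqcup 1_{d_1X_1}:A_0\sqcup d_1X_1 \rwe B_0 \sqcup d_1V_1 \sqcup d_1X_1$ (postcomposed with the swap $\tau_{d_1V_1,d_1X_1}$ that reorders into $d_1(X_1\sqcup V_1)$). Applying (A3) to split this composition, (A6) to strip the $d_1X_1$-summand from the $f\sqcup 1$-piece, and (A7) to reorganize $[X_1\sqcup V_1]$ expresses $\{(fr)_{X_1\sqcup V_1}:C_0 \rwe B_0\}$ in the form $\{f_{V_1}:A_0 \rwe B_0\}\cdot\{r_{X_1}:C_0 \rwe A_0\}$ up to object-level corrections. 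Applying the first identity (with $D_0 = A_0$) then converts this into $\{(f\sqcup r)_{V_1\sqcup X_1}:A_0\sqcup C_0 \rwe B_0\sqcup A_0\}$ up to further central corrections, and taking ratios against the $(g,W_1,s,Y_1)$-version gives the claim.

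The main obstacle will be carrying out the pointwise product formula cleanly: carefully tracking which swap classes $\<-,-\>$ appear when reordering the summands $B_0, A_0, d_1V_1, d_1X_1$, and confirming that every $f,r$-dependent term genuinely cancels in the ratio rather than merely up to some residual commutator. This is precisely the setting in which the (B)-relations of Lemma~\ref{lem:Brels} do their heavy lifting, so the proof should be largely mechanical once those are in hand.
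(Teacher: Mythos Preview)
Your overall strategy matches the paper's: prove a product formula for the single-brace generators with a central correction $\xi$, then show the corrections cancel in the ratio. The one place where your proposal goes astray is the cancellation of the $d_1$-content, and this is where the actual work lives.

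You suggest that $\xi$ involves both $d_0$ and $d_1$ of $V_1, X_1$, and that the $d_1$-pieces cancel against their $W_1, Y_1$-counterparts in the ratio via some (SQ3) argument about $\partial$-boundaries. That is not the mechanism, and if you tried to carry it out you would be stuck: there is no relation forcing $[\tau_{d_1V_1,\,\cdot}]$ to equal $[\tau_{d_1W_1,\,\cdot}]$. What actually happens is that when you swap $[Y_1]$ past the weak-equivalence generator $[A_0 \rwe B_0\sqcup d_1W_1]$ you produce a commutator, and (SQ2)---not (SQ3)---identifies that commutator with $\langle \partial[Y_1],\,\partial[A_0\rwe B_0\sqcup d_1W_1]\rangle$, which expands via bilinearity and (B2) into a product of $\tau$-classes. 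Those $\tau$-classes cancel the $d_1$-terms produced by (B4) and (B5) \emph{internally to} $\xi$, so that the final correction involves only $d_0Y_1$, $d_0W_1$, $A_0$, $B_0$, $D_0$. The cancellation in the ratio $\xi^{-1}\xi'$ is then immediate from $d_0V_1=d_0W_1$ and $d_0X_1=d_0Y_1$, with no further argument needed. The same internal (SQ2) cancellation handles the composition formula in the second part, where the paper records directly that $\{f_{V_1}\}\{r_{X_1}\} = \{(fr)_{X_1\sqcup V_1}\}[\tau_{d_0X_1,B_0}][\tau_{A_0,d_0X_1}]$.
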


\begin{proof}
Observe that since
  $[\tau_{A,B}]$ is central in $\curD_1\E_\dot$, by (SQ2) all commutators are also
  central.  Thus we compute that 
  \[
  \begin{array}{l}
    \{g_{W_1}:A_0 \rwe B_0\}\{s_{Y_1}:C_0 \rwe D_0\} = \\
    \qquad= [W_1][Y_1][A_0 \rwe B_0 \sqcup d_1W_1][C_0 \rwe D_0 \sqcup
    d_1Y_1]\Big[[A_0 \rwe B_0 \sqcup d_1W_1],[Y_1]\Big] \\
    \qquad\anno{(A3)}= [W_1\sqcup Y_1][D_0 \sqcup d_1W_1 \sqcup d_1Y_1 \rwe
    d_1W_1 \sqcup D_0 \sqcup d_1Y_1][A_0 \sqcup C_0 \rwe B_0 \sqcup D_0 \sqcup
    d_1W_1 \sqcup d_1Y_1] \\
    \qquad\qquad[\tau_{d_0Y_1,d_0W_1}][\tau_{d_1W_1,d_0Y_1}][\tau_{D_0 \sqcup
      d_1Y_1,B_0 \sqcup d_1W_1}][\tau_{A_0,D_0\sqcup d_1Y_1}] \Big[[A_0 \rwe B_0
    \sqcup d_1W_1],[Y_1]\Big] \\
    \qquad\anno{(B6),(B2)}= \{(g\sqcup s)_{W_1\sqcup Y_1}: A_0 \sqcup C_0 \rwe B_0
    \sqcup D_0\} \alpha \\
    \qquad\qquad [\tau_{d_0Y_1,d_0W_1}][\tau_{d_1W_1,d_0Y_1}]
    [\tau_{d_1Y_1,B_0}][\tau_{d_1Y_1,d_1W_1}][\tau_{A_0,d_1Y_1}] \Big[[A_0 \rwe
    B_0 \sqcup d_1W_1],[Y_1]\Big] 
    \\
    \qquad= \{(g\sqcup s)_{W_1\sqcup Y_1}: A_0 \sqcup C_0 \rwe B_0
    \sqcup D_0\} \alpha [\tau_{d_0Y_1,d_0W_1}] [\tau_{d_0Y_1,B_0}][\tau_{A_0,d_0Y_1}].    
  \end{array}
  \]
  where $\alpha =
  [\tau_{D_0,B_0}][\tau_{A_0,D_0}]$.  
   Analogously we compute that
  \[\begin{array}{l}
    \{f_{V_1}:A_0 \rwe B_0\}\{r_{X_1}:C_0 \rwe D_0\} = \\
    \qquad= \{(f\sqcup r)_{V_1 \sqcup X_1}: A_0 \sqcup C_0 \rwe B_0 \sqcup
    D_0\}\alpha[\tau_{d_0X_1,d_0V_1}][\tau_{d_0X_1,B_0}][\tau_{A_0,d_0X_1}].
  \end{array}\]

  Note that 
  \[\begin{array}{l}
    \{f_{V_1},g_{W_1}:A_0 \rwe B_0\}\{r_{X_1},s_{Y_1}:C_0 \rwe D_0\}\\
    \qquad= \{r_{X_1}:C_0 \rwe D_0\}^{-1}\{f_{V_1}:A_0 \rwe B_0\}^{-1}\{g_{W_1}:A_0 \rwe
    B_0\}\{s_{Y_1}:C_0 \rwe D_0\},
  \end{array}\]
  since $\{f_{V_1},g_{W_1}:A_0 \rwe B_0\}$ is central.    Thus the difference between the left hand side and the right-hand side of the
  desired equality is 
  \[([\tau_{d_0X_1,d_0V_1}][\tau_{d_0X_1,B_0}][\tau_{A_0,d_0X_1}])^{-1}[\tau_{d_0Y_1,d_0W_1}]
  [\tau_{d_0Y_1,B_0}][\tau_{A_0,d_0Y_1}].\]
  However, since $d_0V_1 = d_0W_1$ and $d_0X_1 = d_0Y_1$ this is equal to $1$,
  and the desired equality follows.

  The second formula follows by a similar analysis and the observation that
  \[\{f_{V_1}: A_0 \rwe B_0\}\{r_{X_1}:C_0 \rwe A_0\} = \{(fr)_{X_1\sqcup
    V_1}:C_0 \rwe B_0\}[\tau_{d_0V_X,B_0}][\tau_{A_0,d_0X_1}].\] Since $d_0X_1
  = d_0Y_1$ the $\tau$'s cancel out, and we are left with the desired
  relation.
\end{proof}

The following theorem is the main technical result of the paper.  It is
important to note that we do not know that the stated relations are {\em all} of
the relations satisfied by the generators.  As observed in \cite{murotonks08},
the structure of the stable quadratic module imposes additional relations
between generators than the ones listed above, and this may lead to extra
relations between the generators of $K_1$.  However, this theorem is sufficient
for several applications; see Sections~\ref{sec:iet} and \ref{sec:polytope} and
\cite{Z-ass-var}.

\begin{theorem} \lbl{thm:K1diag} Suppose that $\E_\dot$ satisfies the
  conditions in Proposition~\ref{prop:1typeE.}.  Every element of $K_1(\E_\dot)$
  is represented by some
  \[\{f_{V_1},g_{W_1}: A_0 \rwe B_0\}\]
  where $V_1$ and $W_1$ satisfy the extra condition that $d_0V_1 = d_0W_1$.
  These satisfy the relations
  \begin{eqnarray*}
    && \{f_{V_1},f_{V_1}: A_0 \rwe B_0\} = 0 \\
    && \{f_{V_1},g_{W_1}: A_0 \rwe B_0\}\{r_{X_1},s_{Y_1}:C_0 \rwe D_0\}\\
    &&\qquad=
    \{(f\sqcup r)_{V_1\sqcup X_1}, (g\sqcup s)_{W_1\sqcup Y_1}: A_0 \sqcup C_0
    \rwe B_0 \sqcup D_0\}.\\
    && \{f'_{V_1'},g'_{W_1'}:B_0 \rwe C_0\}\{f_{V_1},g_{W_1}:A_0 \rwe B_0\} =
    \{(f'f)_{V_1\sqcup V_1'},(g'g)_{W_1\sqcup W_1'}:A_0 \rwe C_0\}.
  \end{eqnarray*}
\end{theorem}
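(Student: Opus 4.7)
By Proposition~\ref{prop:1typeE.} we have $K_1(\E_\dot) \cong \ker(\partial\colon \curD_1\E_\dot \to \curD_0\E_\dot)$, so the content of the theorem lies in showing that every element of this kernel admits a representative in the stated form. The three listed relations themselves come essentially for free: the identity $\{f_{V_1},f_{V_1}\colon A_0 \rwe B_0\} = 0$ is immediate from the shorthand $\{f_{V_1},g_{W_1}\} = \{f_{V_1}\}^{-1}\{g_{W_1}\}$, while the second and third are precisely the two conclusions of Lemma~\ref{lem:addingcurly}.

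For the representability claim, recall that $\curD_1\E_\dot$ is generated by the symbols $[V_1]$ for $V_1 \in \E_1$ and $[A_0 \rwe B_0]$ for weak equivalences in $\E_0$, so any $\omega \in \ker\partial$ is a finite word in these generators and their inverses. The plan is to massage $\omega$ into the canonical shape $[f]^{-1}[V_1]^{-1}[W_1][g]$ with $f\colon A_0 \rwe B_0 \sqcup d_1V_1$ and $g\colon A_0 \rwe B_0 \sqcup d_1W_1$, which is $\{f_{V_1},g_{W_1}\colon A_0 \rwe B_0\}$ by definition.

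The rewriting proceeds in two stages. First, I consolidate same-type generators: I use (A3) to merge composable weak-equivalence generators and (A2), (A4), (A6), (A7) together with the technical relations (B1)--(B6) supplied by Lemma~\ref{lem:Brels} to merge and permute $\E_1$-generators, absorbing any resulting swap generators $[\tau_{-,-}]$ into the weak-equivalence factors. Such swaps lie in the image of $\<\cdot,\cdot\>$ and are therefore central in $\curD_1\E_\dot$ by (SQ2), which lets me freely commute them past everything. Second, I apply the kernel condition $\partial\omega = 1$ in $\curD_0\E_\dot$: evaluating $\partial$ on the collected word yields an equation forcing $d_1V_1$ and $d_1W_1$ to appear as the expected summands in the codomains of $f$ and $g$, exactly matching the template. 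Finally, the compatibility constraint $d_0V_1 = d_0W_1$ is arranged by padding both $V_1$ and $W_1$ with $s_0$-degenerate summands of each other's $d_0$-object, which does not change the class by (A1), (A7), and the second clause of (A6).

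The main obstacle is the first stage: verifying that every rearrangement of $\E_1$-generators past weak-equivalence generators produces only central $[\tau]$-corrections which can be harmlessly reabsorbed. Lemma~\ref{lem:Brels} is designed precisely for this bookkeeping, so once its relations are in hand the proof becomes a (long but mechanical) word-reduction argument in the same spirit as the computation carried out in Lemma~\ref{lem:addingcurly}.
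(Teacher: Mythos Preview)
Your outline is broadly the paper's own strategy, and the relations do follow exactly as you say. But there is a genuine gap in your ``stage~1/stage~2'' argument, and it is precisely the point where the paper has to do real work.

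You write that the swap generators $[\tau_{-,-}]$ produced by the rearrangements are central and can be ``harmlessly reabsorbed'' into the weak-equivalence factors. Centrality is fine, but reabsorption is not automatic: a stray $[\tau_{P,Q}]$ is a weak equivalence $Q\sqcup P \rwe P\sqcup Q$, and there is no reason its source or target should match the source or target of your $f$ or $g$ so that (A3) applies. What your stage~1 actually produces is an equality \emph{modulo} $G = \im\<\cdot,\cdot\>$: you get $x \equiv_G \{f_{V_1},g_{W_1}\colon A_0 \rwe B_0\}$, hence $x = \{f_{V_1},g_{W_1}\}\cdot\alpha$ with $\alpha \in G$ and $\partial\alpha = 1$. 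The problem is now to show that any such $\alpha$ is itself of curly-brace form, and this is not a word-reduction; it requires a structural fact about $G\cap\ker\partial$.

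The paper handles this as follows. Hypothesis~(2) of Proposition~\ref{prop:1typeE.} (the set $S_0$ freely generating the monoid of objects) makes $\curD_0\E_\dot$ a \emph{free} group of nilpotency class~2. One then invokes \cite[Lemma~5.2]{murotonks08}: in that situation every $\alpha\in G\cap\ker\partial$ is of the form $\<y,y\>$ for some $y\in\curD_0\E_\dot$. Since $\<y,y\>$ depends only on $y$ mod~2, one may take $y=[A_0]$ for a single object, and then one checks directly that $\<[A_0],[A_0]\> = \{(\tau_{A_0,A_0})_0,1_0\colon A_0\sqcup A_0 \rwe A_0\sqcup A_0\}$. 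Now $x$ is a product of two curly-brace elements and Lemma~\ref{lem:addingcurly} finishes. Your sketch also glosses over the role of hypothesis~(2) in stage~2: the equation $\partial x=1$ only tells you that $[X_0][d_1V_1]=[Y_0][d_1W_1]$ in $(\curD_0\E_\dot)^{\ab}$, and it is freeness on $S_0$ that upgrades this to an actual permutation isomorphism $X_0\sqcup d_1V_1 \cong Y_0\sqcup d_1W_1$ in $\E_0$, which is what lets you manufacture the common source $A_0$.
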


\begin{proof}
  We follow the proof of \cite[Theorem 2.2]{murotonks08}.
  
  Let $G$ be the image of $\<\cdot,\cdot\>$ and note that $\curD_1\E_\dot/G$ is
  abelian by relation (SQ2).  We first show that every element $x$ in
  $\curD_1\E_\dot/G$ can be represented in this way.  Indeed, note from Lemma
  \ref{lem:Brels}(B4) that
  \[[A_0 \rwe B_0][C_0 \rwe D_0] \equiv_G 
  [A_0\sqcup C_0 \rwe B_0 \sqcup D_0]\]
  and from Lemma \ref{lem:Brels}(B5) 
  \[[A_1][B_1] \equiv_G 
  [B_1 \sqcup A_1].\]
  Thus we can write 
  \begin{align*}
    x &= [A_0 \rwe B_0]^{-1}[C_1]^{-1}[D_1][E_0 \rwe F_0] \\
    &\equiv_G [A_0 \sqcup F_0 \rwe B_0 \sqcup
    F_0]^{-1}[C_1 \sqcup s_0d_0D_1]^{-1}[s_0d_0C_1 \sqcup D_1][B_0 \sqcup E_0 \rwe B_0 \sqcup F_0].
  \end{align*}
  Write $X_0 = A_0 \sqcup F_0$, $Y_0 = B_0 \sqcup E_0$ and $Z_0 = B_0 \sqcup
  F_0$.  Also write $V_1 = C_1 \sqcup s_0d_0D_1$ and $W_1 = s_0d_0C_1 \sqcup
  D_1$.  Then $d_0V_1 = d_0W_1$, and we have
  \[x \equiv_G [X_0 \rwe Z_0]^{-1}[V_1]^{-1}[W_1][Y_0 \rwe Z_0].\]
  Then in $(\curD_0\E_\dot)^{ab}$ we have
  \begin{align*}
    \partial x &=
    [X_0]^{-1}[Z_0][d_1V_1]^{-1}[d_0V_1][d_0W_1]^{-1}[d_1W_1][Z_0]^{-1}[Y_0] \\
    &= [X_0]^{-1}[d_1V_1]^{-1}[d_1W_1][Y_0].
  \end{align*}
  The group $(\curD_0\E_\dot)^{ab}$ is the free abelian group on the elements of
  $S_0$ (the set of generators for the object set of $\E_0$); thus if $\partial
  x = 1$ there exists an isomorphism $X_0\sqcup d_1V_1 \rto^{\cong} Y_0 \sqcup
  d_1W_1$ which is a permutation of factors in a coproduct.  This isomorphism is
  in the image of $\<\cdot,\cdot\>$.  Thus modulo $G$,
  \[[Y_0 \rwe Z_0] \equiv_G [Y_0 \sqcup d_1W_1 \rwe Z_0 \sqcup d_1W_1][X_0
  \sqcup d_1V_1 \rwe Y_1 \sqcup d_1W_1] = [X_0 \sqcup d_1V_1 \rwe Z_0 \sqcup
  d_1W_1].\] If we write $X_0' = X_0 \sqcup d_1V_1$, $f:X_0' \rwe Z_0 \sqcup
  d_1V_1$ and $g:X_0' \rwe d_1W_1$ we can write
  \begin{align*}
    x &\equiv_G [X_0 \sqcup d_1V_1 \rwe Z_0 \sqcup
    d_1V_1]^{-1}[V_1]^{-1}[W_1][X_0 \sqcup d_1V_1 \rwe Z_0 \sqcup d_1W_1] \\
    &= \{f_{V_1},g_{W_1}:X_0' \rwe Z_0\}.
  \end{align*}
  Since $\partial\{f_{V_1},g_{W_1}:X'_0\rwe Z_0\} = 1$, we can write
  \[x = \{f_{V_1},g_{W_1}:X_0' \rwe Z_0\}\alpha,\]
  where $\partial \alpha = 1$ and $\alpha$ is in $G$.  (We can assume $\alpha$ is on
  the right because $G$ is central.)  $\curD_0\E_\dot$ is a free group of
  nilpotency class $2$, so we can apply \cite[Lemma 5.2]{murotonks08} to
  conclude that $\alpha$ must be of the form $\<y,y\>$ for some $y$ in
  $\curD_0\E_\dot$.  But $\<y,y\>$ only depends on $y$ mod $2$, so we can assume
  that $y$ is a sum of objects of $\E_0$, and thus that it is equal to $[A_0]$
  for some $A_0$ in $\E_0$.  Note, however, that 
  \[\ms{\langle [A_0],[A_0]\rangle = \{(\tau_{A_0,A_0})_0,1_0:A_0 \sqcup A_0 \rwe
    A_0 \sqcup A_0\}.}\]
  Therefore $x$ is the product of two elements of the desired form, and by
  Lemma~\ref{lem:addingcurly} it can also be represented in such a form.

  The first relation follows from the definition of $\{f_{V_1},f_{V_1}:A_0 \rwe
  B_0\}$.  The second and third follow from Lemma~\ref{lem:addingcurly}.  
\end{proof}

\begin{remark}
  Let $\E_\dot$ be a simplicial Waldhausen category in which every weak
  equivalence in $\E_0$ is a cofibration, and in which weak equivalences are
  preserved under pushouts.  Then if we have a pushout square
  \begin{diagram}
    { A_0 & B_0 \sqcup d_1V_1 \\
      B_0 \sqcup d_1W_1 & C_0 \\};
    \acyccofib{1-1}{1-2}^{f} \acyccofib{1-1}{2-1}_g 
    \acyccofib{1-2}{2-2}^{g'} \acyccofib{2-1}{2-2}^{f'}
  \end{diagram}
  then the element in $K_1(\E_\dot)$ represented by 
  \[\{f_{V_1},g_{W_1}:A_0 \rwe B_0\}\]
  is also represented by 
  \[[B_0 \sqcup d_1V_1 \rto^{f'} C_0]^{-1}[V_1]^{-1}[W_1][B_0 \sqcup d_1W_1
  \rto^{g'} C_0].\]
  Thus the objects $V_1$ and $W_1$ can be chosen to modify either the domains or
  the codomains of the pair of morphisms.
\end{remark}

The key observation for using Theorem~\ref{thm:K1diag} to work with $K_1$ of an
assembler is that for any simplicial closed assembler $\C_\dot$, $\SC(\C_\dot)$
satisfies the conditions of Proposition~\ref{prop:1typeE.}.  For this it is
important that we chose a model for $\FinSet$ that has a strictly associative
coproduct.

We can now use the Waldhausen structure on $\SC(\C)$ to get generators and
relations for $K_1(\C)$.  We start by examining a more general case first, as it
will be useful in \cite{Z-ass-var}.  Recall that $\W(\C) = (w\SC(\C))^\op$; we
use this notation here to avoid clutter.

For any morphism of assemblers $F: \D \rto \C$ there exists a simplicial
assembler $(\C/F)_\dot$, together with a moprhism $C \rto (\C/F)_\dot$ of
simplicial assemblers such that 
\[K(\D) \rto^{K(F)} K(\C) \rto K((\C/F)_\dot)\]
is a cofiber sequence.  The simplicial assembler has 
\[(\C/F)_0 = \C \qquad\hbox{and}\qquad (\C/F)_1 = \C \vee \D\] with $d_i|_\C =
s_i|\C = 1_\C$ and $d_0|_\D = F$.  For a more detailed discussion of
$(\C/F)_\dot$ and the cofiber sequence, see \cite[Section 6]{Z-Kth-ass}.

\begin{corollary} \lbl{cor:K1Cg}
  Let $\iota:\D \rto \C$ be an inclusion of a subassembler.  Then
  $K_1((\C/\iota)_\dot)$ is generated by diagrams in $\W(\C)$
    \[\mory{A}{B}{C}{D}{f}{g}\]
    for $A,B$ in $\W(\C)$ and $C,D$ in $\W(\D)$.  These satisfy the relations
    \[\bmory{A}{B}{C}{C}{f}{f} = 0 ,\]
    \[\bmory{A}{B}{C}{D}{f}{g} + \bmory{B}{B'}{C'}{D'}{f'}{g'} =
    \bmory{A}{B'}{C\sqcup C'}{D\sqcup D'}{f'f}{g'g}\]
    and
    \[\bmory{A}{B}{C}{D}{f}{g} + \bmory{A'}{B'}{C'}{D'}{f'}{g'} = \bmory{A\sqcup
      A'}{B\sqcup B'}{C\sqcup C'}{D\sqcup D'}{f\sqcup f'}{g \sqcup g'}.\]
\end{corollary}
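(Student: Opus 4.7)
The plan is to apply Theorem~\ref{thm:K1diag} to the simplicial Waldhausen category $\E_\dot := \SC((\C/\iota)_\dot)$. As remarked in the text just before the corollary, this $\E_\dot$ satisfies the hypotheses of Proposition~\ref{prop:1typeE.}: $\SC(-)$ produces strictly associative coproducts from finite sets, a free generating set of objects from the noninitial objects of $\C$, and splittable cofiber sequences by Theorem~\ref{thm:Wald}.  Thus every element of $K_1((\C/\iota)_\dot)$ is represented by a curly pair $\{f_{V_1},g_{W_1}:A_0\rwe B_0\}$ with $A_0,B_0\in\SC(\C)$, $V_1,W_1\in\SC((\C/\iota)_1)=\SC(\C\vee\D)$, and $d_0V_1=d_0W_1$.

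Next I would use the equivalence $\SC(\C\vee\D)\simeq \SC(\C)\oplus\SC(\D)$ from Proposition~\ref{prop:Twprop}(4) to write $V_1=(X_V,U_V)$ and $W_1=(X_W,U_W)$.  The simplicial face maps induce $d_0(X,U)=X\sqcup \W(\iota)(U)$ and $d_1(X,U)=X$, so the compatibility condition becomes $X_V\sqcup \W(\iota)(U_V)=X_W\sqcup \W(\iota)(U_W)$ in $\SC(\C)$.  Because $\iota$ is the inclusion of a subassembler, the components of any object of $\SC(\C)$ split canonically into those in the image of $\W(\iota)$ and those not, so I can write $X_V=B^\perp\sqcup \W(\iota)(\tilde C)$ and $X_W=B^\perp\sqcup\W(\iota)(\tilde D)$ for a common complementary part $B^\perp$ and some $\tilde C,\tilde D\in\SC(\D)$.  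Setting $A:=A_0$, $B:=B_0\sqcup B^\perp$, $C:=\tilde C$, and $D:=\tilde D$, the codomains $B_0\sqcup d_1V_1$ and $B_0\sqcup d_1W_1$ become $B\sqcup\W(\iota)(C)$ and $B\sqcup\W(\iota)(D)$, so $f$ and $g$ fit into the diagram $\mory{A}{B}{C}{D}{f}{g}$ as morphisms in $\W(\C)=(w\SC(\C))^\op$.  Relations (A1) and (A2) give $[V_1]=[(*,U_V)]$ and $[W_1]=[(*,U_W)]$, so the absorbed $B^\perp$ drops out of the generator labels; the pair matches the diagram up to a hidden choice of $U_V,U_W$ with $C\sqcup U_V=D\sqcup U_W$ (for example $U_V=D$ and $U_W=C$).

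The three stated relations of the corollary then follow directly from the three relations of Theorem~\ref{thm:K1diag} under this translation: the first relation of the theorem, applied with $V_1=W_1$ and $f=g$, yields $\bmory{A}{B}{C}{C}{f}{f}=0$; the third (composition) relation, together with the componentwise identity $V_1\sqcup V_1'=(\W(\iota)(C\sqcup C'),U_V\sqcup U_V')$ in $\SC(\C)\oplus\SC(\D)$, yields the composition relation of the corollary; and the second (external-sum) relation yields the coproduct relation by the same componentwise computation.

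The main obstacle is the normalization step in the second paragraph: extracting from an arbitrary $X_V\in\SC(\C)$ its $\W(\iota)$-part and its complementary part, and then verifying that the compatibility condition $d_0V_1=d_0W_1$ forces the complementary parts of $X_V$ and $X_W$ to agree.  This is where the hypothesis that $\iota$ is an \emph{inclusion} (rather than a general morphism of assemblers) is essential, since it lets us recognize which tuple-components of an object of $\Tw(\C^\circ)$ lie in $\W(\iota)(\D)$ unambiguously.
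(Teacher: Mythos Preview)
Your overall plan --- apply Theorem~\ref{thm:K1diag} and then normalize using the $\C$/$\D$ decomposition of $\E_1=\SC(\C\vee\D)$ --- is the same as the paper's.  However, the normalization you sketch does not actually reach the diagram form, and the sentence about a ``hidden choice of $U_V,U_W$'' is where the gap shows.

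The diagram $\mory{A}{B}{C}{D}{f}{g}$ does not merely record the codomains $B\sqcup\W(\iota)(C)$ and $B\sqcup\W(\iota)(D)$; in the paper's convention it represents the very specific element
\[
\{f_{\SC(\iota)(C)\sqcup D}:A\rwe B\}^{-1}\{g_{\,C\sqcup\SC(\iota)(D)}:A\rwe B\},
\]
so the $\D$-part of $V_1$ is forced to be $D$ and the $\D$-part of $W_1$ is forced to be $C$.  Your reduction only produces $V_1'=(\W(\iota)(\tilde C),U_V)$ and $W_1'=(\W(\iota)(\tilde D),U_W)$ subject to $\tilde C\sqcup U_V=\tilde D\sqcup U_W$; you cannot then ``choose'' $U_V=D$ and $U_W=C$, because $U_V,U_W$ are data of the given element, not free parameters.

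The underlying problem is that your split of $X_V$ into ``components in $\im\W(\iota)$'' versus ``components not in $\im\W(\iota)$'' sees only $V_1$, not the interaction with $W_1$.  The paper instead uses the equality $d_0V_1=d_0W_1$ to identify the indexing sets and then partitions index-by-index into four pieces $I^{\C\C},I^{\C\D},I^{\D\C},I^{\D\D}$ according to whether $v_i$ and $w_i$ each lie in $\C$ or in $\D$.  The $I^{\C\C}$ piece (where $v_i=w_i\in\C$, whether or not in $\im\iota$) is absorbed into $B_0$ using (A5)/(A6); the $I^{\D\D}$ piece (where $v_i=w_i\in\D$, using injectivity of $\iota$) is cancelled using (A7) and the fact that $d_1$ kills the $\D$-part.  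Only after both of these reductions do the remaining pieces $I^{\C\D}$ and $I^{\D\C}$ give exactly $V_1''=\SC(\iota)(W)\sqcup V$ and $W_1''=\SC(\iota)(V)\sqcup W$, which is the diagram form.  Your $B^\perp$ captures only the part of $I^{\C\C}$ where $v_i\notin\im\iota$, and you have no analogue of the $I^{\D\D}$ cancellation at all.

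Once the normalization is carried out correctly, your derivation of the three relations from Theorem~\ref{thm:K1diag} is fine.
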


We do not claim that these are \textsl{all} of the relations that these
generators satisfy.  As mentioned in \cite{murotonks08}, it is conjectured that
these are all of the relations, but proving this is more difficult.

\begin{proof}
  Since $\SC((\C/\iota)_1) = \SC(\C\vee\D)$, it naturally contains a copy of
  $\SC(\C)$ and one of $\SC(\D)$.  In the course of this proof we will consider
  $\SC(\C)$ and $\SC(\D)$ as subcategories of $\SC(\C\vee\D)$.
  
  We think of a diagram 
  \[\mory{A}{B}{C}{D}{f}{g}\]
  as representing the element
  \[\{f_{\SC(\iota)(C)\sqcup D}:A \rwe B\}^{-1} \{g_{C\sqcup \SC(\iota)(D)}: A \rwe
  B\}.\] Thus to check that $K_1((\C/\iota)_\dot)$ is generated by the given
  diagrams it suffices to check that every element can be represented by such a
  diagram.  Let $x$ in $K_1((\C/\iota)_\dot)$ be any element, and pick a
  representative
  \[\{f_{V_1}:A_0 \rwe B_0\}^{-1}\{g_{W_1}:A_0 \rwe B_0\}.\] 
  Write $V_1 = \{v_i\}_{i\in I}$ and $W_1 = \{w_j\}_{j\in J}$.  Then $d_0V_1 =
  \{v_i\}_{i\in I}$ and $d_0W_1 = \{\iota w_j\}_{j\in J}$.  Since $d_0V_1 =
  d_0W_1$ it follows that $J = I$ and for all $i\in I$, $v_i = \iota w_i$.  Define
  a partition of $I$ by
  \begin{align*}
    I^{\C\C} &= \{i\in I\,|\, v_i\in \C,\ w_i\in \C\}, \\
    I^{\C\D} &= \{i\in I\,|\, v_i\in \C,\ w_i\in \D\}, \\
    I^{\D\C} &= \{i\in I\,|\, v_i\in \D,\ w_i\in \C\},\ \mathrm{and} \\
    I^{\D\D} &= \{i\in I\,|\, v_i\in \D,\ w_i\in \D\}.
  \end{align*}
  For $\chi = \C\C,\C\D,\D\C,\D\D$ we define $V_1^\chi = \{v_i\}_{i\in I^\chi}$ and
  $W_1^\chi = \{w_i\}_{i\in I^\chi}$.  For conciseness we also write $V_1' =
  \{v_i\}_{i\in I \bs I^{\C\C}}$ and $W_1' = \{w_i\}_{i\in I\bs I^{\C\C}}$, so
  that $d_0V_1' = d_0W_1'$. 
  Then we have
  \[V_1^{\C\C} = W_1^{\C\C} \quad V_1^{\C\D} = \SC(\iota)(W_1^{\C\D}) \quad
  W_1^{\D\C} = \SC(\iota)(V_1^{\D\C}) \quad V_1^{\D\D} = W_1^{\D\D}.\] Note that
  $s_0d_0V_1^{\C\C} = V_1^{\C\C}$.  Write $Z = d_0V_1^{\C\C} = d_1V_1^{\C\C}$,
  and let $\varphi: I \rto I^{\C\C} \sqcup (I \bs I^{\C\C})$ be the natural
  isomorphism.  Let $\varphi_{V_1}$ and $\varphi_{W_1}$ be the induced isomorphisms
  \[\varphi_{V_1}:V_1 \rto^{\cong} s_0Z \sqcup V_1' \qqand \varphi_{W_1}:W_1 \rto^{\cong} s_0Z \sqcup W_1'.\]    It
  follows that
  \begin{eqnarray*}
    \{f_{V_1}:A_0 \rwe B_0\} 
    &=& [V_1][B_0 \sqcup Z \sqcup d_1V_1' \rto^{B_0\sqcup d_1\varphi_{V_1}^{-1}}
    B_0\sqcup d_1V_1][A_0 \rto^{(B_0\sqcup \varphi_{V_1})f_{V_1}} B_0
    \sqcup Z \sqcup d_1V_1'] \\
    &=& [V_1][Z \sqcup d_1V_1' \rto^{d_1\varphi_{V_1}^{-1}} d_1V_1][A_0 \rwe B_0 \sqcup Z \sqcup
    d_1V_1'] \\
    &=& [Z\sqcup d_0V_1' \rto^{d_0\varphi_{V_1}^{-1}} d_0V_1][s_0Z \sqcup V_1'][A_0
    \rwe B_0 \sqcup Z \sqcup d_1V_1'].
  \end{eqnarray*}
  where the last line uses relation (A5).  Thus 
  \[x = \{f_{V_1'}:A_0 \rwe B_0 \sqcup Z\}^{-1}[Z \sqcup d_0V_1'
  \rto^{d_0\varphi_{V_1}\circ d_0\varphi_{W_1}^{-1}} Z
  \sqcup d_0W_1']\{g_{W_1'}:A_0 \rwe B_0 \sqcup Z\}.\]
  Here, we used that $d_0V_1' = d_0W_1'$ to compose the two isomorphisms we get in
  the middle.  By construction, the  middle isomorphism is induced by the
  identity on the indexing set and is therefore the identity, so that
  we get
  \[x = \{f_{V_1'},g_{W_1'}:A_0 \rwe B_0 \sqcup Z\}.\] Thus we can always choose
  a representative for $x$ in which $V_1^{\C\C} = W_1^{\C\C} = 0$; from this point
  on, we assume that this was originally the case for $x$.
  
  By using a similar construction to the one for $\varphi$ above, we can assume
  that $V_1' = V_1^{\C\D} \sqcup V_1^{\D\C} \sqcup V_1^{\D\D}$ and similarly for
  $W_1'$.  Let $V_1'' = V_1^{\C\D} \sqcup V_1^{\D\C}$ and $W_1'' = W_1^{\C\D}
  \sqcup W_1^{\D\C}$, and let $Z' = V_1^{\D\D} = W_1^{\D\D}$.  Note that $d_1Z'
  = 0$.   By (A7),
  \[[V_1'] = [V_1'' \sqcup Z'] = [Z'][V_1'' \sqcup 0] = [Z'][V_1''].\]
  Then we can write
  \begin{eqnarray*}
    &&\{f_{V_1'},g_{W_1'}:A_0 \rwe B_0\} \\
    &&\qquad= [A_0 \rwe B_0
    \sqcup d_1V_1'']^{-1}[V_1'']^{-1}[Z']^{-1}[Z'][W_1''][A_0 \rwe B_0 \sqcup
    d_1W_1''] \\
    &&\qquad= \{f_{V_1''},g_{W_1''}:A_0 \rwe B_0\}.
  \end{eqnarray*}
  In particular, we could also have chosen our representative for $x$ to have
  $V_1^{\D\D} = W_1^{\D\D} = 0$.
  
  Let $V = V_1^{\D\C}$ and $W = W_1^{\C\D}$.  Then 
  \[V_1'' = V \sqcup \SC(\iota)(W) \qqand W_1'' = \SC(\iota)(V) \sqcup W.\]
  This is the information contained in the desired diagram.

  The relations follow from the statement of Theorem~\ref{thm:K1diag}.
\end{proof}

\begin{remark}
  When $\iota$ is any morphism of assemblers the proof above works to show that
  $V_1^{CC}$ and $W_1^{CC}$ can be chosen to be zero; however, it is no longer
  the case that $V_1^{DD}$ and $W_1^{DD}$ can be chosen to be zero.  In fact,
  for any pair of objects $V_1^{DD}$ and $W_1^{DD}$ such that
  $\SC(\iota)(V_1^{DD}) = \SC(\iota)(W_1^{DD})$ we get a different
  representation, and the relations in $K_1$ become significantly more
  complicated.
\end{remark}

By applying this in the case when $\D$ is the trivial assembler, we get the
following: 
\begin{corollary} \lbl{cor:K1C}
    Every element of $K_1(\C)$ can be represented by a pair of morphisms
    \[\morpair{A}{B}{f}{g}\]
    in $\W(\C)$.
    These satisfy the relations 
    \[
    \big[\morpair{A}{B}{f}{f}\big] = 0,   \qquad 
    \big[\morpair{B}{C}{g_1}{g_2}\big] + \big[\morpair{A}{B}{f_1}{f_2}\big] =
    \big[\longmorpair{3em}{A}{C}{g_1f_1}{g_2f_2}\big]  \]
    and 
    \[\big[\morpair{A}{B}{f_1}{f_2}\big] + \big[\morpair{C}{D}{g_1}{g_2}\big] =
    \big[\begin{tikzpicture}[baseline] \node (A) at (0,0) {$A\amalg C$}; \node
      (B) at (5em,0) {$B\amalg D$}; \diagArrow{->, bend left}{A}{B}^{f_1\amalg g_1} \diagArrow{->,
        bend %
        right}{A}{B}_{f_2\amalg g_2}%
    \end{tikzpicture}\big]
    \]
\end{corollary}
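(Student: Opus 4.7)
The plan is to apply Corollary~\ref{cor:K1Cg} with $\D$ taken to be the trivial assembler (whose only object is the initial one) and $\iota$ the canonical inclusion. Since the noninitial subcategory $\D^\circ$ is empty, $\SC(\D)$ has only the empty tuple as an object and is the trivial Waldhausen category, so $K(\D)$ is contractible. The cofiber sequence $K(\D) \rto K(\C) \rto K((\C/\iota)_\dot)$ recalled immediately before Corollary~\ref{cor:K1Cg} then collapses to an equivalence $K(\C) \rwe K((\C/\iota)_\dot)$, and in particular $K_1(\C) \cong K_1((\C/\iota)_\dot)$, so it suffices to describe the latter.

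With this identification in hand, I would next feed $\D$ into Corollary~\ref{cor:K1Cg}: every element of $K_1((\C/\iota)_\dot)$ is represented by a diagram
\[\mory{A}{B}{C}{D}{f}{g}\]
with $C, D \in \W(\D)$. Because $\W(\D)$ contains only the empty tuple, both $C$ and $D$ must be that empty tuple, and its image under $\W(\iota)$ is the coproduct identity $0$ of $\SC(\C)$. The two entries $B \sqcup \W(\iota)(C)$ and $B \sqcup \W(\iota)(D)$ therefore both collapse to $B$, and the diagram reduces precisely to a parallel pair $\morpair{A}{B}{f}{g}$ of morphisms in $\W(\C)$, which is the form claimed in the statement.

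Finally, the three relations of Corollary~\ref{cor:K1C} are obtained by specializing the three relations of Corollary~\ref{cor:K1Cg} along the same collapses $C, C', D, D' = 0$ (and hence $C \sqcup C' = 0$, $D \sqcup D' = 0$): the first relation is immediate; in the composition relation the middle object $B$ picks up no $\W(\iota)$-summand, so $f'f$ and $g'g$ are the ordinary composites in $\W(\C)$; and in the coproduct relation the $\W(\iota)$-summands disappear, leaving the plain coproduct of the parallel pairs. The only point needing verification is that the $\W(\iota)$-images of objects of $\W(\D)$ really do vanish in $\W(\C)$, which is immediate from the definition of the trivial assembler and the fact that $\W(\iota)$ preserves the empty tuple; no substantive obstacle arises.
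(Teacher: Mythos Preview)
Your proposal is correct and is exactly the paper's approach: the paper simply states ``By applying this in the case when $\D$ is the trivial assembler, we get the following'' and then records the corollary, and you have spelled out precisely what that specialization entails. The one extra observation worth making is that for trivial $\D$ one has $\C\vee\D=\C$, so $(\C/\iota)_\dot$ is the constant simplicial assembler at $\C$ and the identification $K_1(\C)\cong K_1((\C/\iota)_\dot)$ is in fact a tautology rather than merely a consequence of the cofiber sequence; but your cofiber-sequence argument is also fine.
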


In addition, we can use the description in Theorem~\ref{thm:K1diag} to give a
formula for the boundary map  $K_1((\C/\iota)_\dot) \rto K_0(\D)$.  Since
$(\C/\iota)_1 = \C \vee \D$, any object $A$ in $\SC((\C/\iota)_1)$ can be write as
$A_\C \sqcup A_\D$, where $A_\C \in \SC(\C)$ and $A_\D\in \SC(\D)$.

\begin{proposition} \lbl{prop:partial}
  Let $\iota:\D \rto \C$ be any morphism  of assemblers.
  The image of the element
  \[\alpha = \{f_{V_1},g_{W_1}:A_0 \rwe B_0\}\]
  of  $K_1((\C/\iota)_\dot)$
  in $K_0(\D)$ is 
  \[[(V_1)_\D]^{-1} [(W_1)_\D].\]
  When $\iota$ is the inclusion of a subassembler and $\alpha$ is expressed in the
  notation of Corollary~\ref{cor:K1Cg}, 
  \[\partial\Big[\mory ABCDfg\Big] = [D]-[C].\]
\end{proposition}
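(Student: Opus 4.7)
My plan is to realize $\partial$ as an explicit homomorphism on the stable quadratic module $\curD_*\E_\dot$ with $\E_\dot = \SC((\C/\iota)_\dot)$, and then identify it with the connecting map of the cofiber sequence $K(\D)\to K(\C)\to K((\C/\iota)_\dot)$. First I would unpack the simplicial structure: $\E_0 = \SC(\C)$ and $\E_1 = \SC(\C\vee\D)\simeq\SC(\C)\oplus\SC(\D)$ by Proposition~\ref{prop:Twprop}(4), with $d_1$ acting as the identity on $\SC(\C)$ and as zero on $\SC(\D)$, while $d_0$ acts as the identity on $\SC(\C)$ and as $\SC(\iota)$ on $\SC(\D)$. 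In particular, for $V_1\in\E_1$, $d_1V_1 = (V_1)_\C$ and $d_0V_1 = (V_1)_\C\sqcup\SC(\iota)((V_1)_\D)$.

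Next I would introduce a candidate map $\Phi:\curD_1\E_\dot\to K_0(\D)$ by declaring, on generators, $\Phi([V_1]) = [(V_1)_\D]$ and $\Phi([A_0\rwe B_0]) = 0$. The main verification is that $\Phi$ respects the defining relations. Relations internal to $\E_0 = \SC(\C)$, namely (A1), (A2), (A3), (A6), vanish on both sides. For (A4), expanding $\E_2 = \SC((\C/\iota)_2) \simeq \SC(\C)\oplus\SC(\D)^{\oplus 2}$ and reading off the $\D$-components of the three face maps reduces the relation to additivity in $K_0(\D)$; relation (A5) uses that the splitting respects weak equivalences; and (A7) becomes the sum relation in $K_0(\D)$. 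Finally the bracket $\<-,-\>$ takes values in the weak equivalences $\tau_{A_0,B_0}\in\E_0$ and so lies in $\ker\Phi$, which is enough for the stable quadratic axioms to hold in the abelian target $K_0(\D)$.

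Then I would identify $\Phi|_{\ker\partial}$ with the connecting map. Since $\Phi$ vanishes on the image of $\curD_*\SC(\C)\to\curD_*\E_\dot$ (no degeneracy has a $\D$-component), $\Phi|_{\ker\partial}$ descends to a group homomorphism on $K_1(\E_\dot)/\operatorname{image}(K_1(\C))$, and by the long exact sequence the abstract connecting map $\delta$ identifies this quotient with a subgroup of $K_0(\D)$. To show that $\Phi|_{\ker\partial}$ agrees with $\delta$ I would argue by naturality of the construction in $\iota$, reducing to a universal case such as $\D$ being the free assembler on one object with $\C$ trivial, where $K((\C/\iota)_\dot)\simeq\Sigma K(\D)$ is explicit and the connecting map is visibly the identity on $K_0(\D)$.

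For the generator, expanding
\[\{f_{V_1},g_{W_1}:A_0\rwe B_0\} = [V_1]^{-1}[A_0\rwe B_0\sqcup d_1V_1]^{-1}[W_1][A_0\rwe B_0\sqcup d_1W_1]\]
and applying $\Phi$ kills the two weak-equivalence factors, leaving $[(V_1)_\D]^{-1}[(W_1)_\D]$. For the subassembler version, substituting $V_1 = \SC(\iota)(C)\sqcup D$ and $W_1 = C\sqcup\SC(\iota)(D)$ (so that $(V_1)_\D = D$ and $(W_1)_\D = C$) yields the stated specialized formula. The main obstacle is the third paragraph: rigorously pinning down that the explicit homomorphism $\Phi$ realizes the abstract boundary $\delta$ rather than some auxiliary natural map, which requires care with signs and the universal-case computation.
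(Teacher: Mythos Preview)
Your approach could be made to work, but it is considerably more roundabout than the paper's, and the gap you flag in your third paragraph is real and unnecessary. The paper sidesteps the entire identification problem by invoking \cite[Theorem~C]{Z-Kth-ass}: the cofiber sequence $K(\D)\to K(\C)\to K((\C/\iota)_\dot)$ is constructed there so that the connecting map is literally induced by a morphism of simplicial assemblers $(\C/\iota)_\dot\to S^1\smash\D$ which collapses every copy of $\C$ to the initial object. Once you know this, functoriality of $\curD_*$ (Lemma~\ref{lem:Dfunc}) does all the work: you simply push the representative $\{f_{V_1},g_{W_1}:A_0\rwe B_0\}$ forward along this morphism. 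The two weak-equivalence generators live in $\E_0=\SC(\C)$ and collapse to $1_\initial$, while each of $[V_1],[W_1]$ retains only its $\D$-component. There is no separate verification of relations (functoriality gives that for free) and no need to argue that an ad hoc $\Phi$ agrees with the abstract boundary, because the morphism of simplicial assemblers \emph{is} the boundary by construction.

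Your $\Phi$ is in fact exactly $\curD_*$ applied to this collapse morphism, so your relation checks are correct but redundant once you have Lemma~\ref{lem:Dfunc}. The universal-case reduction you sketch for identifying $\Phi$ with $\delta$ is not needed and would be awkward to carry out: there is no evident ``free assembler on one object with $\C$ trivial'' that fits into the framework of $(\C/\iota)_\dot$, and naturality alone does not pin down a map up to sign without some anchoring computation. The cleaner route is simply to recall how the cofiber sequence was built in the companion paper.
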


\begin{proof}
  By \cite[Theorem C]{Z-Kth-ass} the boundary morphism $K_1((\C/\iota)_\dot)
  \rto K_0(\D)$ is induced by the morphism of simplicial closed assemblers
  $(\C/\iota)_\dot \rto S^1\smash \D$ which collapses each copy of $\C$ to the
  initial object.  Thus by Lemma~\ref{lem:Dfunc} we just need to see what such a
  morphism of assemblers does to the representative.  Since $A_0 \rwe B_0 \sqcup
  d_1V_1$ and $A_0 \rwe B_0 \sqcup d_1W_1$ both get mapped to $1_\initial$, the
  image of $\alpha$ is equal to the image of $[V_1]^{-1}[W_1]$.  The object
  $V_1$ is a tuple of objects, some from $\C$ and some from $\D$; when we
  collapse all of $\C$ to a point we just keep the ones from $\D$; similarly,
  the image of $[W_1]$ is $[(W_1)_\D]$.
\end{proof}

This will be used to compute the differentials in the spectral sequences that
converge to $K_0[\V_k]$ and $K_0[\V_k]/([\mathbb{A}^1])$ in \cite{Z-ass-var}.

\section{Application: Interval Exchange Transformations}
\lbl{sec:iet}

Consider the assembler $\C$ whose objects are half-open intervals $[a,b)$ of the
real line.  A morphism $[a,b)\rto [c,d)$ is a real number $x$ such that
$[a+x,b+x) \subseteq [c,d)$; the topology is the usual topology.  We show that
$K_1$ of this assembler is the abelianization of the group $G$ of interval
exchange transformations (for more on inteval exchange transformations, see for
example \cite{veech84i, veech84ii, veech84iii}).  By \cite[Theorem
1.3]{veech84iii} $G^{ab}$ is isomorphic to $\R\smash_\Q \R$, the exterior tensor
square of the reals over the rationals.

First, we construct a homomorphism from $G$ to $K_1$.  An interval exchange
transformation of $[0,1)$ is a sequence of real numbers $1=a_0 < a_1 < \cdots <
a_n =1$ and a sequence of real numbers $x_i$ for $i = 1,\ldots,n$ such that
\[[x_i+a_{i-1},x_i+a_{i}) \cap [x_j + a_{j-1}, x_j + a_j) = \eset \qquad i \neq
j\] and $\bigcup_{i=1}^n [x_i+a_{i-1},x_i+a_{i}) = [0,1)$.  These data give us
an element of $K_1(\C)$
\[\big[\morpair{\{[a_{i-1},a_i)\}_{i=1}^n}{\{[0,1)\}}{\cup}{f}\big]\]
where $f_i:[a_{i-1},a_i) \rto [0,1)$ is given by the real number $x_i$ and
$\cup_i$ is given by $0$.  We need to check that the composition of elements of
$K_1(\C)$ corresponds to composition of interval exchange transformations.
Suppose that we are given two interval exchange transformations
\[\big[\morpair{A}{\{[0,1)\}}{\cup}{f}\big] \qqand
\big[\morpair{B}{\{[0,1)\}}{\cup}{g}\big].\]
Consider the diagram
\begin{diagram-fixed}
  { C' & & A \\
    C & B & \{[0,1)\}\\ 
    A & \{[0,1)\} \\};
  \to{1-1}{1-3}^{\cup} \to{1-1}{2-1}_{\cup} \to{1-3}{2-3}^\cup \to{2-1}{2-2}^{\cup} 
  \to{2-2}{3-2}^g \to{2-1}{3-1}_{g'} \to{3-1}{3-2}^\cup
  \to{2-2}{2-3}^\cup
\end{diagram-fixed}
where both squares are pullback squares, and $\cup$'s denote morphisms which are
unions of intervals.  Then
\begin{eqnarray*}
  \big[\morpair{B}{\{[0,1)\}}{\cup}{g}\big] &=&
  \big[\morpair{C'}{B}{\cup}{\cup}\big] +
  \big[\morpair{B}{\{[0,1)\}}{\cup}{g}\big] \\
  &=&     \big[\morpair{C'}{\{[0,1)\}}{\cup}{\cup g}\big] 
  = \big[\morpair{C'}{\{[0,1)\}}{\cup}{\cup g'\cup}\big] \\
  &=& \big[\morpair{C'}{A}{\cup}{\cup g'}\big] +
  \big[\morpair{A}{\{[0,1)\}}{\cup}{\cup}\big] =
  \big[\morpair{C'}{A}{\cup}{\cup g'}\big].
\end{eqnarray*}
Thus
\[\big[\morpair{A}{\{[0,1)\}}{\cup}{f}\big] +
\big[\morpair{B}{\{[0,1)\}}{\cup}{g}\big] =
\big[\morpair{C'}{\{[0,1)\}}{\cup}{\cup g'f}.\] This is the formula for the
composition of interval exchange transformations.

We now have a homomorphism $G \rto K_1(\C)$, which gives a homomorphism
$\psi:G^{ab} \rto K_1(\C)$; we show that this is an isomorphism.  First, we show
that it is surjective: every element of $K_1(\C)$ can be represented by an
interval exchange transformation.  Write $|[a,b)| = b-a$ and $[n) = [0,n)$.  For
any $\alpha\in \R$, define
\[\tau_\alpha =
\big[\morpair{\{[\alpha),[\alpha)\}}{\{[2\alpha)\}}{\tau}{1}\big],\] where
$\tau$ is given by the sequence $(\alpha,0)$ and $1$ is given by the sequence
$(0,\alpha)$.  But $2\tau_\alpha = 0$ and $\tau_\alpha = 6\tau_{\alpha/2}$, so
$\tau_\alpha = 0$ for all $\alpha$.

Any object $\SCob{A}{i}$ has a morphism to $\left\{\left[\sum_{i\in I}
    |A_i|\right)\right\}$, so
\begin{eqnarray*}
  \big[\morpair{A}{\SCob{B}{j}}{f}{g}\big] &=& \big[\morpair{A}{\SCob{B}{j}}{f}{g}\big]
  + \big[\morpair{\SCob{B}{j}}{\left[\sum_{j\in J}|B_j|\right)}{\alpha}{\alpha}\big] \\  &=&
  \big[\morpair{A}{\left[\sum_{j\in J}|B_j|\right)}{\alpha f}{\alpha g}\big]
\end{eqnarray*}
for any choice of morphism $\alpha: \SCob{B}{j} \rto \left\{\left[\sum_{j\in J}
    |B_j|\right)\right\}$.  Thus we can assume that $B$ always consists of a
single segment.

By adding in a segment to both $A$ and $B$ we can also ensure that $\sum_{j\in
  J} |B_j| = 2^n$ for some integer $n$.  In addition, since $\tau_\alpha = 0$
for all $\alpha$ we can write
\[\big[\morpair{A}{\{[2^n)\}}{f}{g}\big] =
\big[\morpair{A'}{\{[2^{n-1})\}}{f'}{g'}\big] +
\big[\morpair{A''}{\{[2^{n-1})\}}{f''}{g''}\big]\] by subdividing $A$ further
and then ensuring that each segment maps to either the first half of $[2^n)$ or
the second under both $f$ and $g$; thus $K_1(\C)$ is generated by elements of
the form
\[\morpair{A}{[1)}{f}{g}.\]
  
Write $A = \{[a_i,b_i)\}_{i\in I}$, and let $f$ be defined by the real numbers
$x_i$ and $g$ by the real numbers $y_i$.  Let $h:\{[a_i+x_i,b_i+x_i)\}_{i\in I}
\rto A$ be defined by the identity map on $I$ and the real numbers $-x_i$; then
\[\big[\morpair{A}{\{[1)\}}{f}{g}\big] +
\big[\morpair{\{[a_i+x_i,b_i+x_i)\}_{i\in I}}{A}{h}{h}\big] =
\big[\morpair{\{[a_i+x_i,b_i+x_i)\}_{i\in I}}{\{[1)\}}{hf}{hg}\big],\] where $hf
= \cup$.  Thus $\psi$ is surjective.

We now construct a surjective homomorphism $\varphi: K_1(\C) \rto \R\smash_\Q\R$
such that $\varphi\psi$ is an isomorphism, completing our proof.  To do this, we
first need to show how to construct morphisms out of $K_1(\C)$.  
\begin{lemma} \label{lem:K1mor}
  Given an abelian group $A$, there is a stable quadratic module $A_*$ with $A_0
  = 0$ and $A_1 = A$.  For any other stable quadratic module $C_*$, any homomorphism
  $f:C_1 \rto A$ such that the composition
  \[C_0^{ab}\otimes C_0^{ab} \rto^{\langle \cdot,\cdot \rangle} C_1 \rto^f A\]
  is uniformly $0$ gives rise to a morphism of stable quadratic modules $C_*
  \rto A_*$.
\end{lemma}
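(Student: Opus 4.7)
The plan splits into two routine verifications. First, I would show that $A_*$ as described is a stable quadratic module. With $A_0 = 0$, the boundary $\partial \colon A \rto 0$ and the bracket $\langle\cdot,\cdot\rangle \colon 0 \otimes 0 \rto A$ are forced to be the zero homomorphisms. Axioms (SQ1) and (SQ3) involve only elements of the trivial group $A_0$ and so hold vacuously. The one substantive check is (SQ2), which demands that $\langle \partial c_1,\partial d_1\rangle = [d_1,c_1]$ for $c_1,d_1 \in A$; the left-hand side is zero, and the right-hand side vanishes because $A$ is abelian.

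Second, I would construct the morphism $C_* \rto A_*$ by taking the degree-zero component to be the unique homomorphism $C_0 \rto 0$ and the degree-one component to be $f$ itself. A morphism of stable quadratic modules is a pair of group homomorphisms at each level compatible with $\partial$ and with $\langle\cdot,\cdot\rangle$. Compatibility with $\partial$ is automatic because the target $A_0$ is trivial. Compatibility with the brackets reduces to the identity
\[ f \circ \langle\cdot,\cdot\rangle_{C_*} \;=\; \langle 0,0\rangle_{A_*} \;=\; 0 \]
of maps $C_0^{\ab} \otimes C_0^{\ab} \rto A$, which is exactly the standing hypothesis on $f$.

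There is no genuine obstacle here: the proof is a direct check of the definitions, and the only content is the observation that killing the image of $\langle\cdot,\cdot\rangle$ is the unique obstruction to producing abelian-group-valued homomorphisms out of $C_1$. In the intended application to $\curD_*\SC(\C)$, this is what allows an invariant defined on representing diagrams to descend to a well-defined map out of $K_1(\C) \cong \ker \partial / \im\langle\cdot,\cdot\rangle$.
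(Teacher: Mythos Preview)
Your verification is correct and matches the paper's approach exactly: the paper simply states that the proof ``follows directly from the definitions'' and omits it, and what you have written is precisely that direct check.

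One small correction to your closing remark: in this setting $K_1(\C) \cong \ker\partial$, not $\ker\partial/\im\langle\cdot,\cdot\rangle$ (see Proposition~\ref{prop:1typeE.}); indeed $\im\langle\cdot,\cdot\rangle$ need not even lie in $\ker\partial$ by (SQ1). The lemma is used by restricting $f$ to $\ker\partial$, not by passing to a quotient.
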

We omit the proof of this as it follows directly from the definitions.  From
this lemma we see that to produce a morphism $K_1(\C) \rto \R\smash_\Q\R$ it
suffices to construct a homomorphism $\curD_1\SC(\C) \rto \R\smash_\Q\R$
satisfying the condition of the lemma.
Here, we are considering $\SC(\C)$ to be a constant simplicial Waldhausen
category; in this context, the presentation of $\curD_1\SC(\C)$ is significantly
simplified, since by (A1) it is generated simply by morphisms $[A \rwe^f B]$ in
$\SC(\C)$.  We define the map $\hat\varphi: \curD_1\SC(\C) \rto \R\smash_\Q\R$
in the following manner.  Write $A = \{[a_i,a_i+\epsilon_i)\}_{i\in I}$, $B =
\{[b_j,b_j+\delta_j)\}_{j\in J}$, and suppose that $f_i:[a_i,a_i+\epsilon_i)
\rto [b_{f(i)}, b_{f(i)} + \delta_{f(i)})$ is given by translation by $x_i$.  We
define
\[\hat\varphi[A \rwe B] = \sum_{i\in I} \epsilon_i\smash x_i.\]
Note that $\hat\varphi[\tau_{A,B}] = 0$, so the relation from Lemma~\ref{lem:K1mor}
holds.  We must check that $\hat \varphi$ is a well-defined homomorphism out of
$\curD_1\SC(\C)$.  To check that it is compatible with relations (A1)-(A7)
follows directly from the definitions; the only complication comes in relation
(A3), where we must use that $\sum_{i\in f^{-1}(j)} \epsilon_i  =\delta_j$.
To check that it does not contradict a relation induced by the fact that we are
working with a stable quadratic module we note that this homomorphism is $0$ on
the image of $\langle \cdot,\cdot \rangle$; since all such relations are induced
by the interaction of $\partial$ and $\langle \cdot,\cdot \rangle$ with
commutators (which must also all map to $0$, since $\R\smash_\Q\R$ is abelian)
this homomorphism is well-defined.

We define $\varphi:K_1(\C) \rto \R\smash_\Q\R$ to be
$\hat\varphi|_{\ker\partial}$.  It remains to check that $\varphi$ is
surjective, and that the composition $\varphi\psi$ is an isomorphism.  Consider
the
element \[\big[\morpair{\{[0,\epsilon),[\epsilon,\delta)\}}{\{[0,\delta)\}}{\cup}{t}\big],\]
where $t$ is defined by the translations $(\epsilon,-\epsilon)$.  By definition,
$\varphi$ maps this to
\[-\epsilon\smash\epsilon - (\delta-\epsilon)\smash (-\epsilon) = \delta\smash
\epsilon.\] Since $\delta$ and $\epsilon$ were arbitrary, $\varphi$ is
surjective.  The homomorphism $\varphi\psi$ is exactly equal to half of the
scissors congruence invariant $S$ introduced in \cite[Equation 1.2]{veech84iii}.
Since $S$ is an isomorphism $G^{ab} \rto \R\smash_\Q\R$, $\varphi\psi$ must also
be an isomorphism.

\section{Application: Differentials in a spectral sequence}
\lbl{sec:polytope}

This section is a sequel to \cite[Section 2.2]{Z-Kth-ass}, and we liberally use
the definitions and notation from that section.  Recall that $\G_n$ is the
assembler whose objects are finite unions of open $n$-simplices in $\R^\infty$
with covering families $\{P_i \rto P\}_{i\in I}$ where $P \bs \bigcup_{i\in I}
\varphi_i(P_i)$ has dimension less than $n$.  The assembler $\gG$ has objects
finite unions of open simplices (not necessarily of the same dimension) in
$\R^\infty$; a family $\{\varphi_i:P_i \rto P\}_{i\in I}$ is a covering family
if $\bigcup_{i\in I} \varphi_i(P_i) = P$.  We write $\gG^{(n)}$ for the
subassembler of polytopes of dimension at most $n$.  Then, by \cite[Proposition
2.4]{Z-Kth-ass}
\[K(\gG^{(n-1)}) \rto K(\gG^{(n)}) \rto K(\G_n)\]
is a cofiber sequence for all $n \geq 0$.  This gives us a spectral
sequence
\[E^1_{p,q} = K_p(\G_q) \Rto K_p(\G).\]
(The indexing is inspired by the Adams spectral sequence, so that the associated
graded of each homotopy group appears in a column.)  Here $d_r:E^r_{p,q} \rto
E^r_{p-1,q-r}$.  

In particular, the $0$-th column converges to the associated graded of the
filtration on $K_0(\gG)$ induced by the images of the homomorphisms
\[\makeshort{K_0(\gG^{(0)}) \rto K_0(\gG^{(1)}) \rto \cdots \rto K_0(\gG).}\]
Thus we see that the $n$-th filtered part of $K_0(\gG)$ is a quotient of
$K_0(\gG^{(n)})$; the kernel of the quotient homomorphism $K_0(\gG^{(n)}) \rto
K_0(\gG)$ is determined by the images of the differentials $d_r:E^r_{1,m+r} \rto
E^r_{1,m}$ for all $r\geq 1$ and $m \leq n$.  In particular, $K_0(\gG)^{(n)} =
K_0(\gG^{(n)})$ if and only if all of these differentials are zero.

The differential $d_r:E^r_{1,m+r} \rto E^r_{1,m}$ is defined to be 
\[K_1(\G_{m+r}) \rto^\partial K_0(\gG^{(m+r-1)}) \rto^{\iota^{-1}}
K_0(\gG^{(m)}) \rto K_0(\G_m).\] It is only defined on those $x$ in
$K_1(\G_{m+r})$ such that $\iota^{-1}(x)$ is nonempty, and it is well-defined by the
general theory of spectral sequences.  Here, $\iota$ is the inclusion
$K_0(\gG^{(m)}) \rto K_0(\gG^{(m+r-1)})$ induced by the inclusion of assemblers
$\gG^{(m)} \rto \gG^{(m+r-1)}$.  We wish to compute this differential.

Let $x$ be in $K_1(\G_{m+r})$; $x$ must correspond to an element in
$K_1(\gG^{(m+r)}/\iota')$, where $\iota'$ is the inclusion of assemblers
$\gG^{(m+r-1)} \rto \gG^{(m+r)}$.  By Corollary~\ref{cor:K1C} $x$ is represented
by a diagram of the form
\[\morpair{\{P_i\}_{i\in I}}{\{Q\}}\cup \varphi;\]
we can assume that the domain is a single polytope and the first map is union by
the same logic employed in Section~\ref{sec:iet}.  Let 
\[T = Q \bs \bigcup_{i\in I} P_i \qqand T' = Q \bs \bigcup_{i\in I}
\varphi_i(P_i).\]
Then $T,T'\in \gG^{(m+r-1)}$ and we have an element 
\[\begin{tikzpicture}[baseline=(A.base)] \node[anchor=east] (A) at (0,0) {$\{Q\}$};
  \node[anchor=west] (B) at (2em,2ex) {$\{P_i\}_{i\in I}\sqcup \W(i)(\{T\})$}; \node[anchor=west]
  (C) at (2em,-2ex) {$\{P_i\}_{i\in I}\sqcup\W(i)(\{T'\})$}; \diagArrow{<-, bend
    left}{A}{B.west}!{\cup} \diagArrow{<-, bend right}{A}{C.west}!{f};%
  \end{tikzpicture}\]
in $K_1(\gG^{(m+r)}/\iota')$.  By Proposition~\ref{prop:partial} 
\[\partial x = [T'] - [T].\]
Since $\iota^{-1}(x) \neq \eset$ this means that we can write $[T']-[T] =
[R']-[R]$ with $\dim R',\dim R \leq m$.  The projection to $K_0(\G_m)$ takes
this to the difference $[\mathring{R'}] - [\mathring{R}]$, where $\mathring{R}$
denotes the $m$-dimensional interior.

For example, when $m=0$ and $r=1$ this takes an interval exchange transformation
which splits a segment $[a,b]$ into $n$ segments, thought of as an
almost-everywhere defined injective piecewise isometry $[a,b] \rto [a,b]$ and
sends it to the difference between the number of points where it is undefined in
the domain and the codomain.  Since these numbers must be the same, we see that
$d_1:E^1_{1,1} \rto E^1_{0,0}$ is uniformly zero.  It is unknown whether there
are nonzero differentials for other values of $m$ and $r$.

\section{Generalizing the construction of Muro and Tonks}
\lbl{app:murotonks}

The goal of this section is to prove a generalization of the construction given
by Muro and Tonks in \cite{murotonks07,murotonks08} of the $1$-type of a
Waldhausen category and prove Proposition~\ref{prop:1typeE.}.  To get their
construction, Muro and Tonks take the bisimplicial set $X_{\dot\dot} = N_\dot
wS_\dot\E$ for a Waldhausen category $\E$, and then compute a representation of
its fundamental crossed complex $\pi X_{\dot\dot}$.  They then note that this
crossed complex inherits a monoid structure from the symmetric monoidal
structure (induced by coproduct) on $\E$, and use it to construct a stable
quadratic module determined by $X_{\dot\dot}$; $K_1(\E)$ is then the kernel of
the boundary map of the stable quadratic module.

To generalize this to simplicial Waldhausen categories, we note that the key
features of a Waldhausen category used by Muro and Tonks are the following:
\begin{itemize}
\item When using the $S_\dot$ construction, $K(\E)$ is an $\Omega$-spectrum
  above level $1$, so identifying the $2$-type of $K(\E)_1$ is equivalent to
  identifying the $1$-type of $K(\E)$.
\item $X_{\dot\dot}$ is horizontally reduced, in the sense that $X_{0\dot} =
  \Delta^0$.
\item $X_{\dot\dot}$ has a strictly unital monoid structure that comes from
  the coproduct structure on $\E$.
\end{itemize}

Let $\E_\dot$ be a simplicial Waldhausen category.  We define a bisimplicial set
$Y_{\dot\dot}$ by
\[Y_{mn} = N_n wS_m \E_n.\]
Then $Y_{\dot\dot}$ has all of the same properties that $X_{\dot\dot}$ has,
above, and thus the $1$-type of $K(\E_\dot)$ can be recovered from the
fundamental crossed complex of $K(\E_\dot)_1$.

Using the generators and relations given in \cite[Lemma~4.6]{murotonks07}
together with the methods described on \cite[page~18]{murotonks07} we can define
the stable quadratic module $D_*\E_\dot$  associated to a simplicial Waldhausen
category in the following manner.  The module $D_*\E_\dot$ is closely related
to, but not isomorphic to, $\curD_*\E_\dot$; we use similar notation to
emphasize this fact.

\begin{definition}
  Let $\E_\dot$ be a simplicial Waldhausen category.  The stable quadratic
  module $D_*\E_\dot$ is defined as follows.  $D_0\E_\dot$ has generators
  $[A_0]$ for $A_0$ in $\ob\E_0$.  $D_1\E_\dot$ has generators
  \begin{itemize}
  \item  $[A_1 \rwe B_1]$ for weak equivalences $A_1\rwe B_1$ in $w\E_1$, and
  \item $[A_0 \rcofib B_0 \rfib B_0/A_0]$ for cofiber sequences $A_0 \rcofib B_0
    \rfib B_0/A_0$ in $\E_0$.
  \end{itemize}
  We use subscripts to keep track of the simplicial dimension of a generator.
  Thus an object $A_i$ lives in $\E_i$.  These satisfy the following relations:
  \begin{itemize}
  \item[(R1)] $\partial[A_1 \rwe B_1] = [d_0B_1]^{-1}[d_1A_1]$.
  \item[(R2)] $\partial[A_0 \rcofib B_0 \rfib B_0/A_0] =
    [B_0]^{-1}[B_0/A_0][A_0]$.
  \item[(R3)] $[0] = 1$.
  \item[(R4)] $[s_0A_0 \req s_0A_0] = 1$.
  \item[(R5)] $[A_0 \req A_0 \rfib 0] = [0 \rcofib A_0 \req A_0] = 1$.
  \item[(R6)] For any pair of composable weak equivalences $A_2 \rwe B_2 \rwe
    C_2$ in $\E_2$, 
    \[[d_1A_2 \rwe d_1C_2] = [d_0B_2 \rwe d_0C_2][d_2A_2 \rwe d_2B_2].\]
  \item[(R7)] For any commutative diagram
    \begin{diagram}
      { A_1 & B_1 & B_1/A_1 \\ A_1' & B_1' & B_1'/A_1' \\};
      \cofib{1-1}{1-2} \cofib{2-1}{2-2} 
      \fib{1-2}{1-3} \fib{2-2}{2-3}
      \we{1-1}{2-1} \we{1-2}{2-2} \we{1-3}{2-3}
    \end{diagram}
    in $\E_1$, the element
    \[[A_1 \rwe A_1'][B_1/A_1 \rwe B_1'/A_1']^{[d_1A_1]}\]
    is equal to 
    \[[d_0A_1' \rcofib
    d_0B_1' \rfib d_0(B_1'/A_1')]^{-1}[B_1 \rwe B_1'][d_1A_1 \rcofib d_1B_1
    \rfib d_1(B_1/A_1)].\]
  \item[(R8)] For any commutative diagram
    \begin{diagram}
      { & & C_0/B_0 \\ & B_0/A_0 & C_0/A_0 \\ A_0 & B_0 & C_0 \\};
      \cofib{3-1}{3-2} \cofib{3-2}{3-3} \cofib{2-2}{2-3}
      \fib{3-2}{2-2} \fib{3-3}{2-3} \fib{2-3}{1-3}
    \end{diagram}
    the element
    \[[A_0 \rcofib C_0 \rfib C_0/A_0][B_0/A_0 \rcofib C_0/A_0 \rfib
    C_0/B_0]^{[A_0]}\]
    is equal to 
    \[[B_0 \rcofib C_0 \rfib C_0/B_0][A_0 \rcofib B_0 \rfib
    B_0/A_0].\]
  \item[(R9)] For any pair of objects $A_0,B_0$ in $\E_0$, 
    \[\langle [A_0],[B_0] \rangle = [B_0 \rcofib A_0\sqcup B_0 \rfib A_0]^{-1}[A_0 \rcofib
    A_0 \sqcup B_0 \rfib B_0].\]  
  \end{itemize}
  (We use analogous names and relations to \cite{murotonks07,murotonks08}.
  However, we use multiplicative and not additive notation to emphasize the fact
  that these groups are not abelian.)

  The stable quadratic module $D^+_*\E_\dot$ is the quotient of $D_*\E_\dot$ by
  the additional relation
  \begin{itemize}
  \item[(R10)] $[B \rcofib A\sqcup B \rfib A] = 0$.
  \end{itemize}
  In $D^+_*\E_\dot$, (R9) is equivalent to the relation $\<[A_0],[B_0]\> =
  [s_0B_0 \sqcup s_0A_0 \rto^{\cong} s_0A_0 \sqcup s_0B_0]$.  In addition, (R10)
  implies that $[A_0 \sqcup B_0] = [A_0][B_0]$.
\end{definition}

Then the proof for \cite[Theorem 1.7, Corollary 1.9]{murotonks07} works to show
that the stable quadratic module obtained from $Y_{\dot\dot}$ gives the $1$-type
of $K(\E_\dot)$; it turns out that this stable quadratic module is $D_*\E_\dot$.
If $\E_0$, $E_1$ and $\E_2$ have a strictly associative and unital coproduct
which is compatible with the simplicial structure maps we can apply
\cite[Theorem 4.2]{murotonks08} (whose proof works identically for a simplicial
Waldhausen category) to get the following:

\begin{theorem} \lbl{thm:genmurotonks}
  For any simplicial Waldhausen category $\E_\dot$ such that $\E_0$, $\E_1$ and
  $\E_2$ have a strictly associative and unital coproduct compatible with the
  simplicial structure maps, the stable quadratic module $D^+_*\E_\dot$
  satisfies
  \[K_1(\E_\dot) \cong \ker \partial \qquad\hbox{and}\qquad K_0(\E_\dot) \cong
  \coker \partial,\] where $\partial$ is the boundary map in $D^+_*\E_\dot$.  In
  addition, the first Postnikov invariant can be obtained fro the structure of
  $D^+_*\E_\dot$.  
\end{theorem}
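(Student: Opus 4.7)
The plan is to follow the strategy of Muro--Tonks but with the bisimplicial set $Y_{mn} = N_n w S_m \E_n$ in place of their $X_{mn} = N_n w S_m \E$. First I would observe that because $K(\E_\dot)$ is built from the $S_\bullet$-construction applied levelwise, the spectrum is an $\Omega$-spectrum above level $1$, so the first Postnikov section of $K(\E_\dot)$ is already determined by the $2$-type of $|Y_{\dot\dot}| \simeq K(\E_\dot)_1$. Since $Y_{0n} = \ast$ for every $n$ (the $S_0$-construction is trivial), $Y_{\dot\dot}$ is horizontally reduced, and the diagonal coproduct $A_\bullet \sqcup B_\bullet$ (which by hypothesis is strictly associative and unital at levels $0,1,2$ and compatible with faces/degeneracies) makes $Y_{\dot\dot}$ into a strictly unital simplicial monoid in bisimplicial sets. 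These are exactly the three structural properties that Muro--Tonks isolate in \cite{murotonks07}.

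Next I would compute a presentation of the fundamental crossed complex $\pi Y_{\dot\dot}$ using the generators and relations of \cite[Lemma~4.6]{murotonks07}: the $0$-cells of $\pi$ are objects of $\E_0$ (since $Y_{1,0} = \mathrm{ob}\, S_1 \E_0$ which is $\mathrm{ob}\,\E_0$ up to the unique cofibration from $0$), the $1$-cells in the horizontal direction come from cofiber sequences in $\E_0$, the $1$-cells in the vertical direction come from weak equivalences in $\E_1$, and the relations encode the simplicial identities of $Y_{\dot\dot}$ together with the action of degeneracies. Converting this into a stable quadratic module via the construction on \cite[p.~18]{murotonks07}, using the monoid structure on $Y_{\dot\dot}$ to define $\<\cdot,\cdot\>$ via the symmetry isomorphism, yields precisely $D_*\E_\dot$ as defined here: (R1)--(R8) are the horizontal/vertical/mixed Moore complex relations, while (R9) records that the bracket is the difference between the two coproduct factorizations. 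Each of these identifications is a direct verification against the definitions.

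Then I would pass from $D_*\E_\dot$ to $D^+_*\E_\dot$ exactly as in \cite[Theorem~4.2]{murotonks08}: the further relation (R10) $[B \rcofib A \sqcup B \rfib A] = 0$ trivializes the split cofiber sequences determined by the (strict) coproduct, and in the presence of strict associativity/unitality one shows that (R10) is compatible with the stable quadratic module axioms and that the quotient $D_*\E_\dot \twoheadrightarrow D^+_*\E_\dot$ induces an isomorphism on $\ker\partial$ and $\coker\partial$. Concretely, one produces an inverse on homology by choosing, for each cofiber sequence, the split representative guaranteed by the strict coproduct; the verification is a diagram chase identical to Muro--Tonks's. The identification $K_0(\E_\dot) \cong \coker\partial$, $K_1(\E_\dot) \cong \ker\partial$, and the recovery of the $k$-invariant then follow automatically from the crossed complex computation of the $2$-type of $|Y_{\dot\dot}|$.

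The main obstacle is bookkeeping rather than conceptual: one has to check that every argument of Muro--Tonks which, in their case, used that the simplicial direction was the $S_\bullet$-direction only, still goes through when there is a second, genuinely nontrivial simplicial direction coming from $\E_\dot$. The crucial technical point is that the proofs of (R7) and (R8), which rely on the fact that the relevant $2$-simplices of $Y_{\dot\dot}$ come from objects of $S_2 \E_n$ and of $S_1 \E_n$ with composable face/degeneracy behavior, continue to work because both simplicial directions satisfy the same compatibility and the strict associativity/unitality hypothesis is imposed at levels $0$, $1$, and $2$ precisely so that the coproduct on $Y_{\dot\dot}$ remains strictly unital as a bisimplicial monoid. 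Once this is verified, the rest of \cite[Theorem~4.2]{murotonks08} transports verbatim.
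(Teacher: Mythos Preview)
Your proposal is correct and follows essentially the same approach as the paper: the paper's argument (given in the paragraphs preceding the theorem rather than in a separate proof environment) is precisely to introduce $Y_{mn} = N_n wS_m\E_n$, observe that it enjoys the same three structural properties as Muro--Tonks's $X_{\dot\dot}$, and then assert that the proofs of \cite[Theorem~1.7, Corollary~1.9]{murotonks07} and \cite[Theorem~4.2]{murotonks08} go through verbatim. Your outline is in fact more detailed than what the paper records, but the strategy and the key identifications (horizontal $1$-cells from cofiber sequences in $\E_0$, vertical $1$-cells from weak equivalences in $\E_1$, relations from the $(3,0)$, $(2,1)$, $(1,2)$ simplices) are identical.
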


\begin{lemma} \lbl{lem:Brels}
  In $\curD_*\E_\dot$ the following extra relations hold:
  \begin{itemize}
  \item[(B1)] $[\tau_{A_0,B_0}]$ is central for all $A_0,B_0$ in $\E_0$.
  \item[(B2)] $[\tau_{A_0,B_0\sqcup C_0}] = [\tau_{A_0,B_0}][\tau_{A_0,C_0}]$
    and $[\tau_{A_0\sqcup B_0,C_0}] = [\tau_{A_0,C_0}][\tau_{B_0,C_0}]$.
  \item[(B3)] For any weak equivalence $A_0 \rwe B_0$ in $\E_0$, any object
    $C_0$ in $\E_0$ and any object $A_1$ in $\E_1$ 
    \[[A_0 \rwe B_0]^{[C_0]} = [A_0 \sqcup C_0 \rwe B_0 \sqcup C_0]
    \qquad\hbox{and}\qquad [A_1]^{[C_0]} = [A_1 \sqcup s_0C_0].\]
  \item[(B4)] For any two weak equivalences $A_0 \rwe B_0$ and $C_0 \rwe D_0$ in
    $\E_0$, 
    \[\ms{[A_0 \rwe B_0][C_0\rwe D_0] = [A_0 \sqcup C_0 \rwe B_0 \sqcup D_0]}[\tau_{D_0,B_0}][\tau_{A_0,D_0}].\]
  \item[(B5)] For any two objects $A_1,B_1$ in $\E_1$,
    \[[A_1][B_1] = [A_1\sqcup
    B_1][\tau_{d_0B_1,d_0A_1}][\tau_{d_1A_1,d_0B_1}].\]
  \item[(B6)] For any three objects $A_0,B_0,C_0$ in $\E_0$,
    \[[\tau_{A_0,B_0} \sqcup C_0] = [\tau_{A_0,B_0}].\]
  \end{itemize}
\end{lemma}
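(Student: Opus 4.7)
The plan is to derive each of (B1)--(B6) directly from the stable quadratic module axioms (SQ1)--(SQ3), the defining relations (A1)--(A7), and the formula for the right $C_0$-action on $C_1$, namely $c_1^{c_0}=c_1\langle c_0,\partial c_1\rangle$. Three tools recur throughout: (i) centrality of the image of $\langle\cdot,\cdot\rangle$ in $C_1$, already noted after the definition of a stable quadratic module; (ii) the identity $\langle x,y\rangle\langle y,x\rangle=1$ from (SQ3), which gives $[\tau_{B,A}]=[\tau_{A,B}]^{-1}$; and (iii) the fact that $\langle\cdot,\cdot\rangle$ factors through $C_0^{\mathrm{ab}}\otimes C_0^{\mathrm{ab}}$, so it annihilates any commutator in the second slot.

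Relations (B1) and (B2) are essentially bookkeeping: (B1) is immediate from (i) since $[\tau_{A_0,B_0}]=\langle[A_0],[B_0]\rangle$, and (B2) follows from bilinearity of $\langle\cdot,\cdot\rangle$ combined with (A2), which gives $[B_0\sqcup C_0]=[B_0][C_0]$. For the first half of (B3) I would factor the weak equivalence $f\sqcup 1_{C_0}:A_0\sqcup C_0\rwe B_0\sqcup C_0$ as the composite $\tau_{B_0,C_0}\circ(1_{C_0}\sqcup f)\circ\tau_{C_0,A_0}$; applying (A3) and (A6) and then (SQ3) together with (i) yields $[A_0\sqcup C_0\rwe B_0\sqcup C_0]=[A_0\rwe B_0][\tau_{C_0,A_0}][\tau_{C_0,B_0}]^{-1}$, which coincides with $[A_0\rwe B_0]^{[C_0]}$ once the action formula is expanded by bilinearity. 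The $[A_1]$ case of (B3) is analogous but uses (A5) applied to the swap weak equivalence $s_0C_0\sqcup A_1\rwe A_1\sqcup s_0C_0$ in $\E_1$ in place of (A3) and (A6), together with (A1) to eliminate the degenerate contribution $[s_0(\cdots)]=1$.

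Relations (B4) and (B5) are consequences of (B3). For (B4), factor $f\sqcup g$ through $A_0\sqcup D_0$ as $(f\sqcup 1_{D_0})\circ(1_{A_0}\sqcup g)$, apply (A3), (A6) and (B3), and compare with $[A_0\rwe B_0][C_0\rwe D_0]$; the discrepancy between $[A_0\rwe B_0]$ and $[A_0\rwe B_0]^{[D_0]}$ produces exactly the $\tau$-correction $[\tau_{D_0,B_0}][\tau_{A_0,D_0}]$ in the statement. For (B5), use (A7) to rewrite $[A_1\sqcup B_1]=[B_1][A_1\sqcup s_0d_1B_1]$, then (B3) to replace $[A_1\sqcup s_0d_1B_1]$ by $[A_1]^{[d_1B_1]}$, and finally reorder $[A_1][B_1]$ by means of the commutator identity $[[A_1],[B_1]]=\langle\partial[B_1],\partial[A_1]\rangle$ provided by (SQ2); after expanding $\partial[A_1]$ and $\partial[B_1]$ by bilinearity and invoking centrality, most of the resulting $\tau$-terms cancel down to the stated formula. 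Finally, (B6) drops out of (B3) via one observation: $\partial[\tau_{A_0,B_0}]=[A_0\sqcup B_0]^{-1}[B_0\sqcup A_0]$ is a commutator in $C_0$ by (A2), hence vanishes in $C_0^{\mathrm{ab}}$, so $\langle[C_0],\partial[\tau_{A_0,B_0}]\rangle=1$; the right action of $[C_0]$ on $[\tau_{A_0,B_0}]$ is therefore trivial, and (B3) applied to the weak equivalence $\tau_{A_0,B_0}$ delivers $[\tau_{A_0,B_0}\sqcup C_0]=[\tau_{A_0,B_0}]$. The main obstacle is the bookkeeping in (B5), where several $\tau$-terms from (A7), (B3), and the commutator must be reconciled using (SQ3) and centrality; everything else is a short computation from the definitions.
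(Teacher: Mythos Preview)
Your proposal is correct and follows essentially the same approach as the paper. The paper's proof is terse: it notes that (B1) and (B2) follow directly from the stable quadratic module axioms, and that (B3)--(B6) follow from (A1)--(A7) together with (B1)--(B2), giving only (B4) as a worked example (via inserting $[\tau_{D_0,B_0}][\tau_{B_0,D_0}]=1$, using (A6) to pad with $D_0$, centrality, and (A3)). Your argument covers the same ground with the minor organizational difference that you route (B4)--(B6) through (B3) and the explicit action formula $c_1^{c_0}=c_1\langle c_0,\partial c_1\rangle$, whereas the paper manipulates the morphism factorizations directly; these are the same computation in slightly different dress.
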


\begin{proof}
  Relations (B1) and (B2) follow directly from the definition of a stable
  quadratic module.  The others follow straightforwardly from relations
  (A1)-(A7) as well as (B1) and (B2).  As an example, we prove (B4); the others
  follow similarly.
    \[
    \begin{array}{l}
      [A_0 \rwe B_0][C_0 \rwe D_0] \\
      \qquad\anno{(A6)}= \big([\tau_{D_0,B_0}][\tau_{B_0,D_0}]\big)[D_0 \sqcup A_0 \rwe D_0 \rwe
      B_0]\big([\tau_{D_0,A_0}][\tau_{A_0,D_0}]\big)[A_0 \sqcup C_0 \rwe A_0 \sqcup D_0]
      \\
      \qquad\anno{(B1)}= \big([D_0 \sqcup B_0 \rwe B_0 \sqcup D_0][D_0 \sqcup A_0 \rwe D_0
      \sqcup B_0][A_0 \sqcup D_0 \rwe D_0 \sqcup A_0]\big)\\\qquad\qquad[A_0 \sqcup C_0 \rwe
      A_0 \sqcup D_0][\tau_{D_0,B_0}][\tau_{A_0,D_0}] \\
      \qquad\anno{(A3)}= [A_0 \sqcup C_0 \rwe B_0 \sqcup D_0][\tau_{D_0,B_0}][\tau_{A_0,D_0}].
    \end{array}\]
\end{proof}

We are now ready to prove Proposition~\ref{prop:1typeE.}.

\begin{proof}[Proof of Proposition~\ref{prop:1typeE.}]
  By Theorem~\ref{thm:genmurotonks} it suffices to show that $D_*^+\E_\dot\cong
  \curD_*\E_\dot$.  We define the homomorphism $f:D_*^+\E_\dot \rto
  \curD_*\E_\dot$ by
  \begin{align*}
    f([A_0]) &= [A_0], \\
    f([A_1 \rwe B_1]) &= [B_1][d_1A_1 \rwe d_1B_1], \hbox{ and} \\
    f([A_0 \rcofib B_0 \rfib B_0/A_0]) &= [B_0/A_0 \sqcup A_0 \rwe B_0].
  \end{align*}
  Let $g:\curD_*\E_\dot \rto D_*^+\E_\dot$ be defined by
  \begin{align*}
    g([A_0]) &= [A_0], \\
    g([A_1]) &= [A_1 \req A_1], \hbox{ and} \\
    g([A_0 \rwe B_0]) &= [s_0A_0 \rwe s_0B_0].
  \end{align*}
  On the generators in degree $0$ $f$ and $g$ are clearly inverses of one
  another.  Note that the following equalities hold.
  \begin{align*}
    fg([A_1]) &\anno{\phantom{(A1)}}{=} f([A_1 \req A_1]) = [A_1][d_1A_1 \req d_1A_1] \anno{(A1)}{=} [A_1]. \\
    fg([A_0 \rwe B_0]) &\anno{\phantom{(A1)}}{=} f([s_0A_0 \rwe s_0B_0]) = [s_0B_0][d_1s_0A_0 \rwe
    d_1s_0B_0] \\
    &\anno{(A1)}{=} [A_0 \rwe B_0]. \\
    gf([A_1 \rwe B_1]) & \anno{\phantom{(A1)}}{=} g([B_1][d_1A_1 \rwe d_1B_1]) =
    [B_1 \req B_1][s_0d_1A_1 \rwe s_0d_1B_1]. \\
    gf([A_0 \rcofib B_0 \rfib B_0/A_0]) & \anno{\phantom{(A1)}}{=} g([B_0/A_0
    \sqcup A_0 \rwe B_0]) = [s_0B_0/A_0 \sqcup s_0A_0 \rwe s_0B_0].
  \end{align*}
  Let $A_1 \rwe B_1$ be a weak equivalence in $\E_1$.  Then in $\E_2$ we
  have the composite 
  \[s_0A_1 \rwe s_0B_1 \req s_0B_1;\]
  applying (R6) we get that
  \[[A_1 \rwe B_1] = [B_1 \req B_1][d_2s_0A_1 \rwe d_2s_0B_1] = [B_1 \req
  B_1][s_0d_1A_1 \rwe s_0d_1B_1].\]
  Thus $gf([A_1 \rwe B_1]) = [A_1\rwe B_1]$.  Now let $A_0 \rcofib B_0 \rfib
  B_0/A_0$ be a cofiber sequence in $\E_0$.  Then we have the following diagram
  in $\E_1$:
  \begin{diagram}
    {s_0A_0 & s_0B_0/A_0 \sqcup s_0A_0 & s_0B_0/A_0 \\
      s_0A_0 & s_0B_0 & s_0B_0/A_0 \\};
    \cofib{1-1}{1-2} \cofib{2-1}{2-2} \fib{1-2}{1-3} \fib{2-2}{2-3}
    \eq{1-1}{2-1} \we{1-2}{2-2} \eq{1-3}{2-3}
  \end{diagram}
  Applying (R7) and simplifying using (R4) and (R10) we get
  \[1 = [A_0 \rcofib B_0 \rfib B_0/A_0]^{-1}[s_0B_0/A_0 \sqcup A_0 \rwe
  s_0B_0].\] Thus $gf([A_0 \rcofib B_0 \rfib B_0/A_0]) = [A_0 \rcofib B_0 \rfib
  B_0/A_0]$.  Thus in order to show that $f$ and $g$ are inverse isomorphisms it
  remains to check that they are well-defined. 

  The function $g$ clearly has no choices to be made in its definition, but the
  definition of $f$ has the possibility of a choice in how to split weak
  equivalences.  However, the compatibility condition ensures that this is not
  the case.  First, note that for any cofiber sequence of the form $A \req A
  \rfib 0$ $\alpha$ must be the identity on $A$ (since the disjoint union is
  strictly unital).  Now suppose that for some cofiber sequence $A \rcofib B
  \rfib B/A$ there exist two choices $\alpha$ and $\alpha'$ that both satisfy
  condition (3).  We consider the diagram
  \begin{diagram}
    {& & 0\\
      & B/A & B/A \\
      A & B & B\\};
    \cofib{3-1}{3-2}^f \eq{3-2}{3-3} \cofib{2-2}{2-3}
    \fib{3-2}{2-2} \fib{3-3}{2-3} \fib{2-3}{1-3}
  \end{diagram}
  where we think of the splitting for $f$ as being $\alpha$ and the splitting of
  $1_B \circ f$ as being $\alpha'$.  Then the compatibility condition ensures
  that $\alpha = \alpha'$, so $f$ is well-defined on elements.  Checking that
  $f$ and $g$ commute with $\partial$ and $\<\cdot,\cdot\>$ is straightforward,
  and thus they both also commute with the action of the $0$-level on the $1$-level.

  It remains to check that all relations in $D_*^+\E_\dot$ and $\curD_*\E_\dot$
  are preserved by $f$ and $g$, respectively.  We check that applying $f$ to
  each of the relations (R3)-(R8) and (R10) gives a valid equality in
  $\curD_*\E_\dot$.  Relations (R3)-(R5) follow directly from the definitions.
  The other relations follow directly from relations (A1)-(A7), with judicious
  use of (B3) whenever the action of $D_0^+\E_\dot$ on $D_1^+\E_\dot$ is needed.
  As an example of this kind of computation, we prove (R8); the others follow
  analogously.  The key idea in all of the proofs is to compose as many
  morphisms as possible and use (A5) to commute the object-type generators past
  the morphism-type generators.  

  Suppose that we have a commutative diagram
  \begin{diagram}
    { & & C_0/B_0 \\ & B_0/A_0 & C_0/A_0 \\ A_0 & B_0 & C_0 \\};
    \cofib{3-1}{3-2} \cofib{3-2}{3-3} \cofib{2-2}{2-3}
    \fib{3-2}{2-2} \fib{3-3}{2-3} \fib{2-3}{1-3}
  \end{diagram}
  in $\E_0$.  We have
  \[\begin{array}{l}
    f([A_0 \rcofib C_0 \rfib C_0/A_0][B_0/A_0 \rcofib C_0/A_0 \rfib
    C_0/B_0]^{[A_0]}) \\
    \qquad= [C_0/A_0 \sqcup A_0 \rwe C_0][C_0/B_0 \sqcup B_0/A_0 \rwe
    C_0/A_0]^{[A_0]} \\
    \qquad\anno{(B3)}= [C_0/A_0 \sqcup A_0 \rwe C_0][C_0/B_0\sqcup B_0/A_0 \sqcup A_0 \rwe C_0/A_0
    \sqcup A_0] \\
    \qquad= [C_0/B_0 \sqcup B_0/A_0 \sqcup A_0 \rwe C_0].
  \end{array}\]
  On the other hand, we have
  \[\begin{array}{l}
    f([B_0\rcofib C_0 \rfib C_0/B_0][A_0\rcofib B_0 \rfib B_0/A_0]) = [C_0/B_0
    \sqcup B_0 \rwe C_0][B_0/A_0 \sqcup A_0 \rwe B_0] \\
    \qquad \anno{(A6)}=[C_0/B_0\sqcup B_0 \rwe C_0][C_0/B_0 \sqcup B_0/A_0
    \sqcup A_0 \rwe C_0/B_0 \sqcup B_0] \\
    \qquad \anno{(A3)}= [C_0/B_0 \sqcup B_0/A_0 \sqcup A_0 \rwe C_0],
  \end{array}\]
  and this must be the same morphism as in the previous part by the
  compatibility condition (3).

  To check $g$ we need to show that relations (A1)-(A7) still hold after
  applying $g$.  We summarize the main steps of checking each relation.
  \begin{itemize}
  \item[(A1)] Follows directly from (R4).
  \item[(A2)] The key step here is that $[A_0\sqcup B_0] = [A_0][B_0]$ by
    applying $\partial$ to (R10).
  \item[(A3)] For any two composable weak equivalences $A_0 \rwe B_0 \rwe C_0$
    in $\E_0$, apply (R6) to the composition $s_0s_0 A_0 \rwe s_0s_0B_0 \rwe
    s_0s_0 C_0$.
  \item[(A4)] Applying (R6) to the composition $A_2 \req A_2 \req A_2$ gives the
    relation $[d_1A_2 \req d_1A_2] = [d_0A_2 \req d_0A_2][d_2A_2 \req d_2A_2]$.
  \item[(A5)] For any weak equivalence $A_1 \rwe B_1$ in $\E_1$, apply (R6) to
    the two compositions $s_0A_1 \rwe s_0B_1 \req s_0B_1$ and $s_1A_1 \req
    s_1A_1 \rwe s_1B_1$.
  \item[(A6)] For any weak equivalence $A_1 \rwe B_1$ in $\E_1$ and any object
    $C_0$ in $\E_0$ apply (R7) to the diagram
    \begin{diagram}
      { A_1 & s_0C_0 \sqcup A_1 & s_0C_0 \\ 
        B_1 & s_0C_0 \sqcup B_1 & s_0C_0
        \\};
      \cofib{1-1}{1-2} \cofib{2-1}{2-2} \fib{1-2}{1-3} \fib{2-2}{2-3}
      \we{1-1}{2-1} \we{1-2}{2-2} \eq{1-3}{2-3}
    \end{diagram}
    and simplify using (R4) and (R10).
  \item[(A7)] To the diagram
    \begin{diagram}
      { B_1 & A_1 \sqcup B_1 & A_1 \\ 
        B_1 & A_1 \sqcup B_1 & A_1
        \\};
      \cofib{1-1}{1-2} \cofib{2-1}{2-2} \fib{1-2}{1-3} \fib{2-2}{2-3}
      \eq{1-1}{2-1} \eq{1-2}{2-2} \eq{1-3}{2-3}
    \end{diagram}
    apply (R7) and and (R10).
  \end{itemize}
\end{proof}

\appendix

\section{Remarks about the proof of Theorem~\ref{thm:Wald}}

\lbl{app:technical}

In this section we prove the technical results that are necessary for the proof
of Theorem~\ref{thm:Wald}.  We omit the main body of the proof, as the proof of
\cite[Theorem 4.2]{zakharevich10} works analogously here; however, we prove some
of the technical lemmas needed for that proof.

\begin{definition}
  The \textsl{set map} of a morphism $f:\SCob{A}{i} \rto \SCob{B}{j}$ in
  $\Tw(\C)$ is the underlying map of sets $f:I \rto J$.
\end{definition}

We have the following observation:

\begin{observation} \lbl{prop:fibers}
Given any diagram 
\begin{diagram}
{A & B \\ C & D \\};
\arrowsquare{f}{g}{h}{j}
\end{diagram}
where $D = \SCob{d}{l}$,
we can write it as a coproduct of diagrams of fibers
\begin{diagram}
{A_l & B_l \\ C_l & D_l \\};
\arrowsquare{f|_l}{g|_l}{h|_l}{j|_l}
\end{diagram}
Thus any pushout or pullback in $\Tw(\C)$ can be computed by computing it on
each fiber independently.
\end{observation}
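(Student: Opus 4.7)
The plan is to use the fibration $\Tw(\C)\rto \FinSet$ that forgets $\C$-components to cut every object in the square into pieces indexed by the underlying set $L$ of $D$. Writing $A=\SCob{a}{i}$, $B=\SCob{b}{j}$, $C=\SCob{c}{k}$, the underlying set maps of $h$ and $j$ produce maps $J\rto L$ and $K\rto L$, and commutativity of the square forces the two composites $I\rto L$ arising from $hf$ and $jg$ to coincide. For each $\ell\in L$, I would define $A_\ell$, $B_\ell$, $C_\ell$ to be the sub-tuples whose indices lie in the fibers of these respective maps over $\ell$, and set $D_\ell=\{d_\ell\}$.

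Since a morphism in $\Tw(\C)$ is specified by its underlying set map together with a $\C$-component at each index, each of $f,g,h,j$ restricts to a morphism on fibers, and the fiber squares commute automatically. The crucial observation is that a finite set is canonically the disjoint union of the fibers of a map out of it; combining this with the description of coproducts in $\Tw(\C)$ as concatenation of indexing sets (Proposition~\ref{prop:Twprop}(1)) shows that $A=\coprod_{\ell\in L} A_\ell$, and likewise for $B$, $C$, $D$. Because morphisms in $\Tw(\C)$ are also reassembled component-by-component, the original square is literally the coproduct of the fiber squares.

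The claim about pullbacks and pushouts is then a formal consequence of this decomposition. A pullback of $B \rto D$ and $C\rto D$ in $\Tw(\C)$ has underlying set $J\times_L K=\coprod_\ell h^{-1}(\ell)\times j^{-1}(\ell)$, and its $\C$-components are the pullbacks in $\C$ computed at each pair of indices lying over a common $\ell$; this visibly coincides with the coproduct over $\ell$ of the fiber pullbacks. Pushouts, in the regime where they exist (Proposition~\ref{prop:Twprop}(2)--(3)), decompose analogously, since the coproduct of pushout diagrams in $\Tw(\C)$ is again a pushout. I do not anticipate a substantial obstacle in this proof: the only real work is the bookkeeping identifying the fiber decompositions with these standard constructions, which is direct once the coproduct description of $\Tw(\C)$ is at hand.
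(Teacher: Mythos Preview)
Your argument is correct and is exactly the natural verification of the observation. The paper does not supply a proof at all: the statement is recorded as an \emph{Observation} and left without justification, so there is nothing further to compare against. One minor citation slip: when you appeal to the existence of pushouts you point to Proposition~\ref{prop:Twprop}(2)--(3), but those parts concern monomorphisms and pullbacks in $\W(\C)$; the relevant existence statement for pushouts in $\Tw(\C)$ is the earlier (unlabeled) proposition immediately following Definition~\ref{def:grtwist}.
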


The main technical work necessary for the proof of Theorem~\ref{thm:Wald} is the
construction of pushouts in $\SC(\C)$.  These depend on dependent products
in $\Tw(\C)$, which we construct in Lemma~\ref{lem:pullandpush}.  (For more
on dependent products, see \cite[Section IV]{maclanemoerdijk}.)  The functor
$\Pi_\sigma$, below, was denoted $\sigma_*$ in \cite{zakharevich10}.

We begin by showing that in many cases Grothendieck twists have dependent
products.

\begin{lemma} \lbl{lem:dependentproduct} Suppose that $\C$ is a category with
  all pullbacks and $\D$ is a sieve in $\C$.  Let $\sigma: A \rto B$ be a
  morphism in $\Tw(\iso\C\bs\D)$.  The functor $\sigma^*:(\Tw(\C\bs\D)/B) \rto
  (\Tw(\C\bs\D)/A)$ has a right adjoint $\Pi_\sigma$.  If $\sigma$ has an
  injective set map then $\sigma^*\Pi_\sigma \cong 1$; if $\sigma$ has a
  surjective set map then $\Pi_\sigma \sigma^* \cong 1$.
\end{lemma}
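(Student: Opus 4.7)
The plan is to construct $\Pi_\sigma$ explicitly using fiberwise products in the slice categories of $\C$, and then verify the universal property by direct inspection of morphism sets, using Observation~\ref{prop:fibers} to reduce everything to fiberwise computations over $B$.

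Concretely, given $g : Y \rto A$ in $\Tw(\C\bs\D)/A$ with $Y = \{y_l\}_{l \in L}$ and set map $g : L \rto I$, I would decompose over $J$: for each $j \in J$ the preimage $\sigma^{-1}(j) \subseteq I$ is a finite set, and over each $i \in \sigma^{-1}(j)$ the fiber $Y_i = \{y_l : g(l)=i\}$ sits as an object of $\Tw(\C\bs\D)/a_i$. Because each $\sigma_i : a_i \rto b_{\sigma(i)}$ is an isomorphism, $Y_i$ transports canonically into $\Tw(\C\bs\D)/b_j$, and I would define the $j$-fiber of $\Pi_\sigma Y$ as the product in $\Tw(\C\bs\D)/b_j$ of the transported fibers as $i$ ranges over $\sigma^{-1}(j)$. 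Unpacking this product, it is indexed by the set of sections $s : \sigma^{-1}(j) \rto L$ with $g \circ s = \mathrm{id}$, and the component at such an $s$ is the iterated fiber product $\prod_{i \in \sigma^{-1}(j)}^{b_j} y_{s(i)}$ formed in $\C$ using the composites $y_{s(i)} \rto a_i \stackrel{\sigma_i}{\rto} b_j$. Assembling the $j$-fibers over all of $J$ yields $\Pi_\sigma Y$ together with its structure map to $B$.

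Next, I would verify the adjunction by unfolding morphism sets on both sides. A morphism $X \rto \Pi_\sigma Y$ over $B$ amounts, by the fiber-over-$B$ description, to a choice for each $k \in K$ of a section $s_k$ over $f(k)$ together with a map $x_k \rto \prod_{i \in \sigma^{-1}(f(k))}^{b_{f(k)}} y_{s_k(i)}$ in $\C/b_{f(k)}$. The universal property of the fiber product in $\C$, combined with the isomorphisms $\sigma_i$, rewrites this as the data of, for each $(i,k) \in I \times_J K$, a choice of $l \in g^{-1}(i)$ and a map $a_i \times_{b_{\sigma(i)}} x_k \rto y_l$ over $a_i$, which is precisely a morphism $\sigma^* X \rto Y$ over $A$. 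Naturality in $X$ and $Y$ is then immediate. For the special cases I would then compute directly: when $\sigma$ has an injective set map, each $\sigma^{-1}(j)$ has at most one element, so each $j$-fiber of $\Pi_\sigma Y$ is either $Y_{\sigma^{-1}(j)}$ transported to $b_j$ (when $j \in \sigma(I)$) or the empty product $b_j$; pulling back along $\sigma$ discards the components over $J\bs \sigma(I)$ and transports the rest back via $\sigma_i^{-1}$, recovering $Y$. When $\sigma$ has a surjective set map, every $\sigma^{-1}(j)$ is nonempty, and one checks that the section-product construction applied to $\sigma^* X$ collapses back to $X$ using the isomorphisms $\sigma_i$ to identify the copies of $x_k$ introduced by pullback.

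The main obstacle I expect is twofold. First, one must confirm that the fiber products defining $\Pi_\sigma Y$ in fact land in $\C\bs\D$ rather than in $\D$, which is where the sieve hypothesis on $\D$ enters and ensures $\Pi_\sigma Y$ is really an object of $\Tw(\C\bs\D)$. Second, the combinatorial bookkeeping in the surjective case — tracking how the section indexing set interacts with the pullback index $I \times_J K$ and verifying that the resulting fiber products genuinely recover $X$ via the $\sigma_i$ — is the most delicate step, and is where the bulk of the work will lie.
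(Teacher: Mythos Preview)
Your proposal is correct and follows essentially the same route as the paper: reduce to the case $B=\{b\}$ via Observation~\ref{prop:fibers}, build $\Pi_\sigma$ as the limit (an iterated fiber product) of the spokes $A'_{(i)}\rto\{a_i\}\rto\{b\}$, and verify adjointness by showing this limit is terminal in $\sigma^*/A'$. Your section-indexed description is exactly the unpacked form of the paper's limit, and your treatment of the injective case matches the paper's. Two remarks: first, your worry about landing in $\C\bs\D$ is handled in the paper by computing the iterated pullback \emph{in} $\Tw(\C\bs\D)$ (which has pullbacks by the earlier Proposition), rather than in $\C$ and then checking; second, the surjective case in the paper is dispatched in one line by observing that each fiber $(\sigma^*B')_{(i)}$ is canonically $B'$ and that $B'$ then serves as the limit, so while your instinct that this is the delicate step is reasonable, the paper does not expend the bookkeeping you anticipate.
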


\begin{proof}
  Write $\sigma: A \rto B$, and let $\sigma^*: \Tw(\C\bs\D)/B \rto
  \Tw(\C\bs\D)/A$ be the functor pulling back along $\sigma$.  We want to show
  that $\sigma^*$ has a right adjoint $\Pi_\sigma$.  By
  Observation~\ref{prop:fibers} we can assume that $B = \{b\}$.
  
  Write $A = \SCob{a}{i}$.  If $I = \eset$ then there is exactly one object
  above $A$: $\{\}_\eset$.  Thus in this case we can define $\Pi_\sigma =
  B$ and the lemma clearly holds.  Thus from now on we can assume that $I\neq
  \eset$.  For any $p:A'\rto A$ with $A' = \SCob{A'}{k}$, write $A'_{(i)} =
  \{A'_k\}_{k\in p^{-1}(i)}$ for $i\in I$.  Let $\tilde A$ be the limit of the
  following diagram:
  \begin{squisheddiagram}[4em]
    {& \{a_i\} & A_{(i)}' \\
      \{b\} & \vdots & \vdots\\
      & \{a_{i'}\} & A'_{(i')} \\};
    \to{1-3}{1-2}_{p|_i} \to{3-3}{3-2}_{p|_{i'}}
    \to{1-2}{2-1}+{above}{\sigma_i} \to{3-2}{2-1}+{below}{\sigma_{i'}}
  \end{squisheddiagram}
  where the spokes are indexed over all elements of $I$.  (Note that the limit
  exists because it can be computed as an iterated pullback.)  We then define
  $\Pi_\sigma(p) = \tilde A \rto \{b\}$.
  
  We now need to show that $\Pi_\sigma$ is right adjoint to $\sigma^*$.  We 
  do this by showing that $\tilde A$ is the terminal object in $\sigma^*/A'$ for
  all $A'$.  Indeed, suppose that we have an object $(B'\rto^qB,p')$ in
  $\sigma^*/A'$.  Write $\sigma^*B' = \{B' \rto^q
  \{b\}\rto^{\{\Sigma_i^{-1}\}}\{a_i\}\}_{i\in I}$.  As $\sigma^*B'$ sits above
  $A'$ we are given morphisms $h_i$ such that for each $i$ the diagram
  \begin{diagram}
  {B' & \{b\} \\ A_{(i)}' & \{a_i\} \\};
  \arrowsquare{q}{h_i}{\{\Sigma_i^{-1}\}}{p|_i}
  \end{diagram}
  commutes.  However, this means that the diagram
  \begin{squisheddiagram}[4em]
    { & \{a_i\} & A'_{(i)} \\
      \{b\} & \vdots & \vdots & B' \\
      & \{a_{i'} & A'_{(i')} \\};
    \to{1-3}{1-2}_{p|_i} \to{3-3}{3-2}_{p|_{i'}}
    \to{1-2}{2-1}+{above}{\{\sigma_i\}\ } \to{3-2}{2-1}+{below}{\{\sigma_{i'}\}\
      \ 
    }
    \to{2-4}{1-3}+{above}{h_i} \to{2-4}{3-3}+{below}{h_{i'}}
  \end{squisheddiagram}
  commutes as well, which gives us a unique factorization of $\sigma^*B'$
  through $\tilde A$.  Thus $\tilde A$ is a terminal object and
  $\Pi_\sigma$ is right adjoint to $\sigma^*$.
    
  Now suppose that $\sigma$ has an injective set map.  This means that $I$ is
  either a singleton or empty, so we just need to prove that $\sigma^*\Pi_\sigma
  \cong 1$ in those cases.  If $I = \eset$ then $\sigma^*\Pi_\sigma \{\}_\eset =
  \{\}_\eset$, so $\sigma^*\Pi_\sigma \cong 1$ trivially.  If $I = \{*\}$ then
  \[\sigma^* \Pi_\sigma(B'\rto \{b\}) = \sigma^*(B' \rto \{b\}
  \rto^{\{\Sigma_*\}} \{a_*\}) = B' \rto \{b\},\] so $\sigma^*\Pi_\sigma \cong
  1$.
  
  On the other hand, suppose that $\sigma$ has a surjective set map.  Then we
  can assume that $I$ is nonempty, and write $\sigma^*(B'\rto\{b\}) =
  \coprod_{i\in I} (B'\rto \{b\}\rto^{\{\Sigma_i^{-1}\}} \{a_i\})$.  But then
  $B'$ is a pullback of the desired diagram, so we see that $\Pi_\sigma \sigma^*
  \cong 1$.
\end{proof}

\begin{lemma} \lbl{lem:subcomposition}
  Given two composable morphisms $f:A\rto B$ and $g:B\rto C$ in $\Tw(\C^\circ)$, if
  $gf$ is a sub-map then so is $f$.  If in addition $g$ and $gf$ are covering sub-maps, then so is $f$.
\end{lemma}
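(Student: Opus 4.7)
My plan is to handle the two claims in turn. For the first, suppose $gf$ is a sub-map and pick indices $i, i' \in I$ with $f(i) = f(i') = j$; writing $k = g(j)$ so that $(gf)(i) = (gf)(i') = k$, the hypothesis gives $A_i \times_{C_k} A_{i'} = \initial$ and I want to deduce $A_i \times_{B_j} A_{i'} = \initial$. The natural comparison map $A_i \times_{B_j} A_{i'} \rto A_i \times_{C_k} A_{i'}$ always exists by the universal property; the key claim is that it is an isomorphism. This is where axiom (M) enters: since $g_j : B_j \rto C_k$ is a monomorphism in $\C$, any pair of test maps $T \rto A_i$, $T \rto A_{i'}$ agreeing on $C_k$ after composition with $g_j f_i$ and $g_j f_{i'}$ already agrees on $B_j$ after $f_i, f_{i'}$. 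Hence the comparison is an iso, and $A_i \times_{B_j} A_{i'} \cong \initial$ as needed.

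For the second claim, assume further that both $g$ and $gf$ are covering sub-maps; by the first part $f$ is already a sub-map, so it remains to verify the covering condition. Fix $j \in J$ with $k := g(j)$, and plan to pull back the covering family $\{A_i \rto C_k\}_{i \in (gf)^{-1}(k)}$ along $g_j : B_j \rto C_k$. Stability of covers under pullback in the Grothendieck site $\C$ produces a covering family $\{A_i \times_{C_k} B_j \rto B_j\}_{i \in (gf)^{-1}(k)}$ of $B_j$. I then compute these pullbacks: for $i \in f^{-1}(j)$, the monomorphism argument of part one gives $A_i \times_{C_k} B_j \cong A_i$ with projection $f_i$; for $i \in f^{-1}(j')$ with $j' \in g^{-1}(k) \smallsetminus \{j\}$, the disjointness $B_j \times_{C_k} B_{j'} = \initial$ coming from $g$ being a sub-map forces $A_i \times_{C_k} B_j \cong \initial$.

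The remaining step is to remove the $\initial$-indexed legs of this cover, leaving only the family $\{f_i : A_i \rto B_j\}_{i \in f^{-1}(j)}$ that I need to be a covering family. This is done by refining each $\initial$-leg by the empty family, which axiom (I) allows, and then invoking the transitivity axiom of a Grothendieck topology. The main obstacle I anticipate is making this last refinement argument precise: one must be careful that the topology on $\C$ genuinely permits dropping $\initial$-legs from a cover (i.e.\ that refining a covering family by replacing some of its members with their own covers, here the empty cover of $\initial$, still yields a covering family). Once this is dispatched, the monomorphism-based pullback computations are routine, and together they close out both halves of the lemma.
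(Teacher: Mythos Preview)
Your argument is correct. For the first claim, your use of axiom~(M) to show the comparison map is an isomorphism is essentially the paper's argument in slightly different clothing: the paper observes only that there is \emph{some} morphism $A_i\times_{B_j}A_{i'}\to A_i\times_{C_k}A_{i'}=\initial$ and concludes the domain is $\initial$ because in a category where every morphism is a monomorphism the initial object is strict. Both routes amount to the same invocation of~(M).

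For the second claim your approach genuinely differs from the paper's. The paper argues structurally inside $\Tw(\C^\circ)$: it first treats the special case where $f$ is a section of $g$ (reducing via the fiberwise observation to a single object, where~(M) forces both maps to be isomorphisms and the sub-map condition on $g$ forces $|J|=1$), and then reduces the general case to this one by forming the pullback square
\[
\begin{array}{ccc}
A\times_C B & \longrightarrow & B\\
\downarrow & & \downarrow\\
A & \longrightarrow & C
\end{array}
\]
and noting that $f$ induces a section $s$ of the left vertical map, so $s$ is a covering sub-map by the special case and $f$ is the composite of $s$ with the top covering sub-map. Your argument instead works entirely at the level of the site $\C$: you pull back the $(gf)$-cover of $C_k$ along $g_j$, identify each pulled-back leg as either $A_i$ (when $i\in f^{-1}(j)$, via the mono $g_j$) or $\initial$ (when $f(i)\neq j$, via disjointness of $g$), and then discard the $\initial$-legs by refining them with the empty cover from axiom~(I) and invoking transitivity. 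Your worry about this last step is unfounded: it is exactly the local-character axiom of a Grothendieck topology, applied with the identity cover on each $A_i$-leg and the empty cover on each $\initial$-leg. Your route is more elementary and self-contained within $\C$; the paper's is shorter once one has the fiberwise machinery and highlights the role of the section, which is reused elsewhere in the appendix.
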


\begin{proof}
  Write $A = \SCob{a}{i}$, $B = \SCob{b}{j}$ and $C = \SCob{c}{k}$.  We need to
  show that if $f(i) = f(i')$ for $i,i'\in I$ then $a_i \times_{b_{f(i)}} a_{i'}
  = \initial$.  If $f(i) = f(i')$ then $gf(i) = gf(i')$, and as $gf$ is a
  sub-map $a_i \times_{c_{gf(i)}} a_{i'} = \initial$.  However, we have a
  morphism $a_i \times_{b_{f(i)}} a_{i'} \rto a_i \times_{c_{gf(i)}} a_{i'}$
  induced by the morphism $G_{f(i)}:b_{f(i)} \rto c_{gf(i)}$, so we conclude
  that $a_i\times_{b_{f(i)}} a_{i'} = \initial$.
  
  We now turn our attention to the second part of the lemma.  We first
  prove it in the case when $f$ is a section of $g$, so that $C=A$ and $gf =
  1_A$.  By Observation~\ref{prop:fibers} we can assume that $A = \{a\}$; we
  write the indexing set as $\{*\}$.  Then we have two composable morphisms
  in $\C$, 
  \[a \rto^{F_*} b_{f(*)} \rto^{G_{f(*)}} a,\]
  which compose to the
  identity.  By axiom (M), any such diagram must have both morphisms be
  isomorphisms.  As $g$ is a covering sub-map this means that $J = \{*\}$ as
  well, since otherwise $b_{f(*)} \times_a b_j \cong b_j \neq \initial$, which
  contradicts the sub-map condition.  Thus $f$ is an isomorphism, and in
  particular a covering sub-map.
  
  
  Now we consider the general case.  We have the following diagram,
  \begin{diagram}
	{A\times_C B & B \\ A & C\\};
	\cover{1-1}{1-2} \cover{2-1}{2-2}^{gf} \cover{1-2}{2-2}^g \cover{1-1}{2-1}_{t}
	\arrow{sub,->,out=135,in=-135}{2-1}{1-1}^s
	\sub{2-1}{1-2}!f
  \end{diagram}
  where $s$ is the section of $t:A\times_C B\rto A$ induced by $f$.  Applying
  the special case of the lemma to $s$ and $t$ we see that $s$ must be a
  covering sub-map, and thus $f$ is also one.
\end{proof}


The following is an immediate consequence of Lemma~\ref{lem:subcomposition}:

\begin{observation} \lbl{obs:preorder} For any object $A$ in $\Tw(\C^\circ)$,
  $\Tw(\C^\circ)^\SSub/A$ is a preorder of which $\W(\C)/A$ is a full
  subcategory.
\end{observation}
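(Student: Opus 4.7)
The observation has two claims: that $\Tw(\C^\circ)^\SSub/A$ is a preorder, and that $\W(\C)/A$ embeds as a full subcategory. My plan is to handle the preorder part by hand (using the sub-map definition together with axiom~(M)) and to obtain fullness directly from Lemma~\ref{lem:subcomposition}.

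For the preorder part, I would fix two parallel morphisms $f_1, f_2 \colon (B,p) \rto (C,q)$ in the slice, so $qf_1 = qf_2 = p$ with $p$, $q$, $f_1$, $f_2$ all sub-maps. Write $B = \SCob{b}{i}$, $C = \SCob{c}{j}$, $A = \SCob{a}{k}$. The composite condition forces $q(f_1(i)) = q(f_2(i))$ for every $i \in I$. If the values $f_1(i)$ and $f_2(i)$ were to disagree, the sub-map property of $q$ would make the components $c_{f_1(i)}$ and $c_{f_2(i)}$ disjoint over $a_{p(i)}$, so their pullback in $\C$ would be $\initial$; the agreement of composites then yields a factorization $b_i \rto c_{f_1(i)} \times_{a_{p(i)}} c_{f_2(i)} = \initial$, contradicting the fact that $b_i$ is a non-initial object (since $B$ lies in $\Tw(\C^\circ)$). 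Hence $f_1$ and $f_2$ agree on underlying set maps, and axiom~(M), which forces every morphism of $\C$ to be monic, upgrades the equality $q_{f_1(i)} \circ (f_1)_i = q_{f_1(i)} \circ (f_2)_i$ to $(f_1)_i = (f_2)_i$.

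For fullness, suppose $(B,p)$ and $(C,q)$ both lie in $\W(\C)/A$ and that $f \colon (B,p) \rto (C,q)$ is a morphism in $\Tw(\C^\circ)^\SSub/A$. Then $f$ is a sub-map, and both $q$ and $qf = p$ are covering sub-maps, so the second statement of Lemma~\ref{lem:subcomposition} immediately yields that $f$ is itself a covering sub-map, i.e., a morphism in $\W(\C)/A$. I do not anticipate any real obstacle; the only nontrivial piece is the monomorphism-like uniqueness underlying the preorder argument, which is a direct unpacking of the sub-map definition together with one use of axiom~(M).
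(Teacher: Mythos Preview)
Your argument is correct and matches the paper's approach: the paper records the observation as an immediate consequence of Lemma~\ref{lem:subcomposition}, and your use of that lemma for the fullness claim is exactly what is intended. For the preorder claim you have written out the underlying reason (sub-maps are monic in $\Tw(\C^\circ)$, via disjointness plus strictness of $\initial$ and axiom~(M)) that the paper leaves implicit; this is the right justification and no step is missing.
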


\extension{It feels like there should be a slightly more general version of
  this.  It can clearly be generalized to $\C\bs\D$, but is there more that can
  be done?}

Lemma~\ref{lem:dependentproduct} implies that Grothendieck twists of
assemblers have dependent products; it now remains only to check that they
preserve sub-maps and covering sub-maps.
  
\begin{lemma} \lbl{lem:pullandpush}
  Let $\C$ be a closed assembler.
Let $\sigma: A \rmove B$ be a move.  The functor 
\[\sigma^*:(\Tw(\C^\circ)/B) \rto (\Tw(\C^\circ)/A)\] has a right adjoint $\Pi_\sigma$, which also preserves (covering)
sub-maps.  These functors restrict to adjoint pairs
\[\sigma^*: \Tw(\C^\circ)^\SSub/B \rlto \Tw(\C^\circ)^\SSub/A \!\relcolon
\Pi_\sigma\]
and
\[\sigma^*: \W(\C)/B \rlto \W(\C)/A \!\relcolon
\Pi_\sigma.\]
\end{lemma}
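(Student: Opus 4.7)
The plan is to obtain $\Pi_\sigma$ from Lemma~\ref{lem:dependentproduct} and then verify the preservation and restriction claims fibrewise.

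Since $\C$ is a closed assembler, axiom~(P) supplies all pullbacks in $\C$, and because $\sigma$ is a move, all of its components are isomorphisms; thus $\sigma$ is a morphism in $\Tw(\iso(\C^\circ))$, and Lemma~\ref{lem:dependentproduct} applies to produce the right adjoint $\Pi_\sigma$ to $\sigma^*$. That $\sigma^*$ preserves (covering) sub-maps, both on objects of the slice and on morphisms between them, follows at once from Proposition~\ref{prop:Twprop}(1), since $\sigma^*$ is defined by pullback along $\sigma$.

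The core of the argument is showing that $\Pi_\sigma$ preserves (covering) sub-maps. I would factor $\sigma = \sigma_2 \sigma_1$, with $\sigma_1: A \rto A'$ surjective on index sets (taking $A' = \{b_j\}_{j\in\sigma(I)}$, with the original components $\sigma_i$) and $\sigma_2: A' \rto B$ an inclusion on index sets with identity components; since right adjoints compose, $\Pi_\sigma \cong \Pi_{\sigma_2}\Pi_{\sigma_1}$. The injective factor is easy: by Observation~\ref{prop:fibers} one works fibrewise, and over indices $j \notin \sigma(I)$ the explicit construction in Lemma~\ref{lem:dependentproduct} gives the singleton identity on $\{b_j\}$, which is trivially a (covering) sub-map. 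For the surjective factor, Lemma~\ref{lem:dependentproduct} describes the fibre of $\Pi_{\sigma_1}(p)$ over $b_j$ as having components indexed by $\prod_{i\in\sigma^{-1}(j)} p^{-1}(i)$, with the component at $(k_i)$ being the pullback over $b_j$ of the $a'_{k_i}$'s transported through the isomorphisms $\sigma_i$. Two distinct tuples differ in some coordinate $i_0$, so their corresponding components share the factor $a'_{k_{i_0}} \times_{a_{i_0}} a'_{k'_{i_0}} = \initial$ inherited from $p$ being a sub-map, yielding the sub-map property. When $p$ is a covering sub-map, the finitely many families $\{a'_k \rto b_j\}_{k\in p^{-1}(i)}$ for $i\in\sigma^{-1}(j)$ are each covering, and their iterative common refinement via the transitivity and stability axioms of the Grothendieck topology on $\C$ produces the desired covering family indexed by $\prod_i p^{-1}(i)$.

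Preservation of morphisms between (covering) sub-maps is analogous: a (covering) sub-map $f$ between slice objects induces $\Pi_\sigma(f)$ whose component at a tuple of indices is built from the corresponding components of $f$ and inherits disjointness and coveringness fibrewise. Since $\sigma^*$ and $\Pi_\sigma$ each restrict to the subcategories of (covering) sub-maps on objects and morphisms, and the Hom-sets in the restricted slices embed into those of the full slices, the adjunction isomorphism restricts to give the two claimed adjoint pairs. The main obstacle I anticipate is the covering-family bookkeeping in the surjective case, which amounts to a finite induction on $|\sigma^{-1}(j)|$ using the transitivity axiom for covering families.
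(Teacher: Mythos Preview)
Your proposal is correct, but the paper takes a more streamlined route that avoids the surjective/injective factorisation and the explicit tuple combinatorics. The key difference is the paper's systematic use of Lemma~\ref{lem:subcomposition}: since any morphism in the slice factoring a sub-map is itself a sub-map (and likewise for covering sub-maps when both structure maps are covering), it suffices to show that $\Pi_\sigma$ takes (covering) sub-map \emph{objects} of the slice to (covering) sub-map objects; preservation on morphisms between them is then automatic. For this object-level check, the paper observes directly from the construction in Lemma~\ref{lem:dependentproduct} that $\Pi_\sigma(p)$ is the iterated pullback in $\Tw(\C^\circ)$ of the (covering) sub-maps $\{\Sigma_i\}\circ p|_i : A'_{(i)}\rto\{b\}$, and invokes Proposition~\ref{prop:Twprop}(1) that (covering) sub-maps are stable under pullback. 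Your disjointness check on tuples and your covering argument via iterated refinement are precisely what this pullback-stability unpacks to, so the two arguments do the same work at different levels of abstraction; the paper's version just packages the induction into a single appeal to closure under pullback. For the restriction claim, your sentence ``the adjunction isomorphism restricts'' needs one more word of justification, since knowing only that Hom-sets embed does not by itself force the bijection to respect the sub-Hom-sets. The easiest fix is to note, again via Lemma~\ref{lem:subcomposition}, that $\Tw(\C^\circ)^\SSub/A$ and $\W(\C)/A$ are in fact \emph{full} subcategories of $\Tw(\C^\circ)/A$, so restriction on objects suffices; the paper instead checks that the unit and counit of the adjunction are themselves (covering) sub-maps, which is equivalent.
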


\begin{proof}
  By Lemma~\ref{lem:dependentproduct} $\Pi_\sigma$ exists, so all we need to
  check is that it preserves (covering) sub-maps.  Using
  Lemma~\ref{lem:subcomposition}, in order to show that $\Pi_\sigma$ preserves
  (covering) sub-maps it suffices to show that if $A'\rsub^p A$ is a (covering)
  sub-map then so is $\Pi_\sigma(A' \rsub^p A)$.  By examining the proof of
  Lemma~\ref{lem:dependentproduct} we see that $\Pi_\sigma$ is computed as a
  pullback of morphisms
  \[A_i'\rto^{p|_i} \{a_i\} \rto^{\{\Sigma_i\}} \{b\}.\] If $p$ is a (covering)
  sub-map then so is each $p|_i$, and thus so is $\{\Sigma_i\}\circ p|_i$.
  Since $\Pi_\sigma$ is computed as the pullback of all of these and (covering)
  sub-maps are preserved under pullbacks, $\Pi_\sigma$ preserves (covering)
  sub-maps.

  We prove the restriction statement by showing that the unit and counit of
  the adjunction are both covering sub-maps.  Consider the unit; for a morphism
  $q:B' \rto B$ it is given by a diagram
  \begin{diagram}
    { B' && \Pi_\sigma \sigma^* B' \\ & B \\};
    \to{1-1}{1-3}^{\eta_q} \to{1-1}{2-2}_q \to{1-3}{2-2}^{\Pi_\sigma \sigma^*q}
  \end{diagram}
  If $q$ is a (covering) sub-map then so is $\Pi_\sigma \sigma^* q$, and thus by
  Lemma~\ref{lem:subcomposition} so is $\eta_q$.  The counit is handled
  analogously. 
\end{proof}

\bibliographystyle{IZ}
\bibliography{IZ-all}

\end{document}